\documentclass{qam-l}


\usepackage{amsfonts}
\usepackage{amsthm}

\usepackage{graphicx}
\usepackage{makecell,rotating}
\usepackage[labelsep=quad,indention=10pt]{caption}
\usepackage[list=true]{subcaption}
\usepackage{hyperref}

\captionsetup[table]{textfont=it,position=bottom, font+=small}
\captionsetup[subtable]{font+=small}
\captionsetup[figure]{textfont=it,position=bottom, font+=small}
\captionsetup[subfigure]{font+=small}

\DeclareCaptionSubType*[arabic]{table}
\captionsetup[subtable]{labelformat=simple,labelsep=colon}
\DeclareCaptionSubType*[arabic]{figure}
\captionsetup[subfigure]{labelformat=simple,labelsep=colon}

\usepackage{chngcntr}

\usepackage{bigints}


\setlength{\topmargin}{-20pt}
\setlength{\textheight}{660pt}

\makeatletter
\newcommand*{\rom}[1]{\expandafter\@slowromancap\romannumeral #1@}
\makeatother
\usepackage{amsfonts}
\usepackage{amsthm,amssymb,amsmath}

\theoremstyle{plain}
\newtheorem{THEOREM}{Theorem}[section]

\newtheorem{theorem}[THEOREM]{Theorem}

\newtheorem{corollary}[THEOREM]{Corollary}
\newtheorem{lemma}[THEOREM]{Lemma}
\newtheorem{proposition}[THEOREM]{Proposition}

\theoremstyle{definition}

\theoremstyle{remark}

\newtheorem{remark}[THEOREM]{Remark}



\newcommand{\R}{\ensuremath{\mathbb{R}}}   


\def \a {\alpha}

\def \d {\delta}

\def \e {\varepsilon}

\def \l {\lambda}

\def \z {\zeta}

\def \O {\Omega}
\def \Rn {\R^n}






 %
 %
 %
 %
 %
 %
 %
 %
 %
 %

\begin{document}

\title{On the quenching behavior of the MEMS with fringing field}
\author{Xue Luo}
\address{School of Mathematics and Systems Science, Beihang University, Haidian District, Beijing, 100191, P. R. China.} %
\email{xluo@buaa.edu.cn, luoxue0327@163.com}
\author{Stephen S.-T. Yau}
\address{Department of Mathematical Sciences, Tsinghua University, Beijing, 100084, P.R.China.}
\email{yau@uic.edu}
\thanks{This work is supported by the start-up fund from Beihang Univeristy and Tsinghua University.}%

\subjclass[2000]{35J60, 35B40}

\date{}

\begin{abstract}
The singular parabolic problem $u_t-\triangle u=\l{\frac{1+\d|\nabla u|^2}{(1-u)^2}}$ on a bounded domain $\O$ of $\Rn$ with Dirichlet boundary condition, models the Microelectromechanical systems (MEMS) device with fringing field. In this paper, we focus on the quenching behavior of the solution to this equation. We first show that there exists a critical value $\l_\d^*>0$ such that if $0<\l<\l_\d^*$, all solutions exist globally; while for $\l>\l_\d^*$, all the solution will quench in finite time. The estimate of the quenching time in terms of large voltage $\l$ is investigated. Furthermore, the quenching set is a compact subset of $\O$, provided $\O$ is a convex bounded domain in $\mathbb{R}^n$. In particular, if the domain $\O$ is radially symmetric, then the origin is the only quenching point. We not only derive the one-side estimate of the quenching rate, but also further study the refined asymptotic behavior of the finite quenching solution.
\end{abstract}

\maketitle

\section{Introduction}

Micro- and nanoelectromechanical systems (MEMS and NEMS) are indubitably the hottest topic in engineering nowadays. These devices have been playing important roles in the development of many commercial systems, such as accelerometers, optical switches, microgrippers, micro force gauges, transducers, micropumps, etc. Yet it remains many researches to be done. A deeper understanding of basic phenomena will advance the design in MEMS and NEMS. 

The simplified physical model of MEMS is the idealized electrostatic device. The upper part of this device consists of a thin and deformable elastic membrane that is held fixed along its boundary and which lies above a rigid grounded plate. This elastic membrane is modeled as a dielectric with a small but finite thickness. The upper surface of the membrane is coated with a negligibly thin metallic conducting film. When a voltage $V$ is applied to the conducting film, the thin dielectric membrane deflects towards the bottom plate, and when $V$ is increased beyond a certain critical value $V^*$, which is known as {\sl pull-in voltage}, the steady-state of the elastic membrane is lost, and proceeds to quench or touch down at finite time.

In designing almost any MEMS or NEMS device based on the interaction of electrostatic forces with elastic structures, the designers will always confront the ``pull-in" instability. This instability refers to the pheonomena of quenching or touch down as we described previously when the applied voltage is beyond certain critical value $V^*$. It is easy to see that this instability severely restricts the stable range of operation of many devices \cite{PB}. Hence many reseaches have been done in understanding and controlling the instability.  Most investigations of MEMS and NEMS have followed Nathanson's lead \cite{NNW} and use some sort of small aspect ratio approximation to simplify the mathematical model. An overview of the physical phenomena of the mathematical models associated with the rapidly developing field of MEMS technology is given in \cite{PB}. 

The instability of the simplified mathematical model (cf. \cite{GPW}) has also been observed and analyzed in \cite{GPW}, \cite{GG2008}, \cite{Guo}, etc. This model is described by a partial differential equation:
\begin{equation}\label{eqn-without fringing}
	\left\{\begin{aligned}
		u_t-\triangle u=&\frac\l{(1-u)^2}\quad\textup{for}\ (x,t)\in\O_T\\
		u(x,t)=&0\quad x\in\partial\O_T\\
		u(x,0)=&0\quad x\in\O,
	\end{aligned}\right.
\end{equation} 
where $\circ_T=\circ\times[0,T)$, $T$ is the maximal time of existence of the solution. The study of \eqref{eqn-without fringing} starts from its stationary equation. It is shown in \cite{GG2007} that there exists a pull-in voltage $\l^*:=\l^*(\O)>0$ such that 
\begin{enumerate}
	\item[a.] If $0\leq \l<\l^*$, there exists at least one solution to the stationary equation of \eqref{eqn-without fringing}.
	\item[b.] If $\l>\l^*$, there is no solution to the stationary equation of \eqref{eqn-without fringing}.
\end{enumerate}
Concerning the evolutionary equation \eqref{eqn-without fringing}, \cite{GG2008} dealt with the issues of global convergence as well as finite and infinite time quenching of \eqref{eqn-without fringing}. It asserts that for the same $\l^*$ above, the followings hold:
\begin{enumerate}
	\item If $\l\leq\l^*$, then there exists a unique solution $u(x,t)$ to \eqref{eqn-without fringing} which globally converges pointwisely as $t\rightarrow+\infty$ to its unique minimal steady-state.
	\item If $\l>\l^*$, then a unique solution $u(x,t)$ to \eqref{eqn-without fringing} must quench in finite time.
\end{enumerate}
More refined analysis of the quenching behavior of \eqref{eqn-without fringing} is in \cite{GG2008}, \cite{Guo} and the references therein.

As pointed out in \cite{PD}, \eqref{eqn-without fringing} is only a leading-order outer approximation of an asymptotic theory based on expansion in the small aspect ratio. The fringing term $\d|\nabla u|^2$ is the first-order correction. The model \eqref{eqn-without fringing} is modified as:
\begin{equation}\tag{$F_{\l,\d}$}\label{F}
    \left\{ \begin{aligned}
            &u_t-\triangle u = \l\frac{1+\d|\nabla u|^2}{(1-u)^2},\quad (x,t)\in\O_T\\
            &u(x,t)=0, \quad (x,t)\in\partial\O_T\\
            &u(x,0)=0, \quad x\in\O.
        \end{aligned} \right.
\end{equation}
In this paper, we are aim to understand how the fringing term affects the behavior of the solution to \eqref{F}, including the pull-in voltage, quenching time, quenching behavior, etc.

The stationary equation of \eqref{F}
\begin{equation}\tag{$SF_{\l,\d}$}\label{SF}
    \left\{ \begin{aligned}
            &-\triangle u = \l\frac{1+\d|\nabla u|^2}{(1-u)^2},\quad x\in\O\subset\Rn\\
            &u(x)=0, \quad x\in\partial\O.
        \end{aligned} \right.
\end{equation}
has been studied in \cite{WY}. The authors show that for fixed $\d>0$, there exists a pull-in voltage $\l_\d^*>0$ such that for $\l>\l_\d^*$ there are no solution to \eqref{SF}; for $0<\l<\l_\d^*$ there are at least two solutions; and when $\l=\l_\d^*$ there exists a unique solution. Furthermore, for $\l<\l_\d^*$ the equation \eqref{SF} has a minimal solution $u_\l$ and $\l\mapsto u_\l$ is increasing for $\l\in(0,\l_\d^*)$.

The instability of \eqref{F} is stated in the following theorem. 

\begin{theorem}[Theorem 2.3, \cite{Wq}]\label{thm-intro.global and quenching}
    For fixed $\d>0$, suppose $\l_\d^*$ is the pull-in voltage in \cite{WY}, then the following hold:
\begin{enumerate}
    \item If $\l\leq\l_\d^*$, then there exits a unique global solution $u(x,t)$ of \eqref{F} which converges as $t\rightarrow\infty$ monotonically and pointwisely to its unique minimal steady state.
    \item If $\l>\l^*_\d$, then the unique solution $u(x,t)$ for \eqref{F} must quench in finite time.
\end{enumerate}
\end{theorem}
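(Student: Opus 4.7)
The plan is to mirror the proof of the analogous theorem for the non-fringing problem \eqref{eqn-without fringing} in \cite{GG2008}, combining parabolic comparison against the minimal steady state from \cite{WY} with an eigenfunction-Jensen argument, and handling the extra nonlinearity $\d|\n u|^2$ via its favorable sign. A preliminary ingredient shared by both parts is the time-monotonicity of $u$: differentiating \eqref{F} formally in $t$, the function $w := u_t$ solves a linear parabolic equation (with coefficients smooth as long as $u<1$) with $w|_{\p\O}=0$ and $w|_{t=0^+}=\l>0$, so the strong maximum principle yields $u_t\geq 0$.

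For part (1), I would use the minimal steady state $u_\l < 1$ from \cite{WY} as a time-independent super-solution of \eqref{F}. Since $u(\cdot,0)=0 \leq u_\l$, the parabolic comparison principle gives $u(x,t) \leq u_\l(x) < 1$ throughout, simultaneously ruling out quenching and forcing global existence. Combined with monotonicity, this produces a pointwise limit $u_\infty$ as $t\to\infty$; the uniform bound $u \leq u_\l$ keeps the source of \eqref{F} bounded, so parabolic regularity upgrades the convergence to a topology in which $u_\infty$ is a classical solution of \eqref{SF}. Since $u_\infty \leq u_\l$, the minimality of $u_\l$ forces $u_\infty = u_\l$.

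For part (2), I argue by contradiction: assume $u$ is global. Monotonicity gives a pointwise limit $u_\infty \in [0,1]$. If $\|u_\infty\|_\infty < 1$, parabolic regularity promotes the convergence to $C^2$ and $u_\infty$ would be a classical solution of \eqref{SF}, contradicting $\l > \l_\d^*$; hence $\sup_\O u(\cdot,t) \to 1$. To convert this into \emph{finite}-time quenching, I would test \eqref{F} against the first Dirichlet eigenfunction $\phi_1$ of $-\triangle$ on $\O$, with eigenvalue $\mu_1$, $\phi_1 > 0$ and $\int_\O \phi_1\, dx = 1$. Integrating by parts and discarding the non-negative fringing contribution yields, with $E(t) := \int_\O u(\cdot,t)\phi_1\, dx$,
\[
E'(t) \;\geq\; -\mu_1 E(t) \;+\; \l \int_\O \frac{\phi_1}{(1-u)^2}\, dx.
\]
Since $s \mapsto 1/(1-s)^2$ is convex on $[0,1)$, Jensen's inequality bounds the integral below by $(1-E(t))^{-2}$, giving the ODE inequality
\[
E'(t) \;\geq\; -\mu_1 E(t) \;+\; \frac{\l}{(1-E(t))^2},
\]
from which finite-time blow-up of $E$ (hence quenching of $u$) would follow.

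The main obstacle lies in the last step. A direct maximization shows that $-\mu_1 s + \l/(1-s)^2 > 0$ on $[0,1)$ as soon as $\l > 4\mu_1/27$, in which case $E$ reaches $1$ in finite time unconditionally. However, testing \eqref{SF} itself against $\phi_1$ (again exploiting the favorable sign of the fringing term) gives only $\l_\d^* \leq 4\mu_1/27$, so in the intermediate window $\l_\d^* < \l \leq 4\mu_1/27$ the bare ODE inequality admits a stable equilibrium rather than producing blow-up. Closing this window — i.e., ruling out pure infinite-time quenching when $\l > \l_\d^*$ — is the delicate part I would expect to occupy most of the effort; one natural route is to combine the ODE inequality with a local analysis near the (necessarily nonempty) quenching set of $u_\infty$, reinstating the fringing term $\d|\n u|^2$ previously discarded to keep the right-hand side uniformly positive once $u$ concentrates sufficiently close to $1$.
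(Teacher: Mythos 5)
Part (1) of your proposal is essentially the paper's argument: compare with the minimal steady state $u_\l$ from \cite{WY} as a supersolution, use $u_t\geq0$ from differentiating the equation, and identify the monotone limit with $u_\l$ by minimality. One caveat: the paper deliberately works with the transformed variable $v=\zeta_{\l,\d}(u)=\int_0^u e^{\l\d/(1-s)}ds$, which converts \eqref{F} into the semilinear problem \eqref{V} with no gradient term, precisely because passing to the limit $t\to\infty$ in the term $\d|\n u|^2$ is the step the authors criticize as non-rigorous in \cite{Wq}; they then identify the limit as a steady state via an $L^2$ eigenfunction-expansion argument rather than pointwise $C^2$ convergence. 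Your "parabolic regularity upgrades the convergence" sentence is exactly the point that needs this extra care.

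Part (2) contains a genuine gap, which you yourself flag: the eigenfunction--Jensen ODE inequality only forces $E(t)\to1$ in finite time when $\l>4\mu_1/27$, and since $\l_\d^*\leq\l^*\leq 4\mu_0/27$ (indeed strictly less for $\d>0$ by Proposition 3.1 of the paper), the window $\l_\d^*<\l\leq 4\mu_1/27$ is nonempty and is exactly where the theorem has content. Your suggested repair ("local analysis near the quenching set, reinstating the fringing term") is not an argument. The paper closes this by an entirely different mechanism, adapted from \cite{BCMR}: assume the solution is global, and for $\e\in(0,\l-\l_\d^*)$ set $\Phi_\e=\tilde h^{-1}\circ h$ with $h(u)=\int_0^u(1-s)^2ds$ and $\tilde h$ the analogous primitive for the nonlinearity $(\l-\e)/\l(1-u)^{-2}$; explicitly $\Phi_\e(u)=1-\bigl[\tfrac{\e}{\l}+\tfrac{\l-\e}{\l}(1-u)^3\bigr]^{1/3}\leq C_\e<1$. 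Concavity of $\Phi_\e$ and the bound $|\Phi_\e'|\leq1$ (which controls the gradient term) show $w_\e=\Phi_\e(u)$ is a \emph{global supersolution of} $(F_{\l-\e,\d})$ \emph{uniformly bounded away from} $1$, so $(F_{\l-\e,\d})$ has a global bounded solution; after the transformation to $(V_{\l-\e,\d})$, Theorem 1 of \cite{BCMR} (global bounded solution of the semilinear parabolic problem implies a weak stationary solution) plus bootstrap yields a smooth solution of the stationary problem for $\l-\e>\l_\d^*$, contradicting the nonexistence result of \cite{WY}. This device — trading a global, possibly infinite-time-quenching solution at voltage $\l$ for a uniformly bounded one at voltage $\l-\e$ — is the missing idea; without it your proof only establishes "quenching in finite or infinite time," not the stated finite-time conclusion.
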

In the literature, we say the solution $u$ quenches if it reaches $u=1$. Although the proof of this theorem has been briefly sketched in \cite{Wq} with the right-hand side of \eqref{F} to be, rather than $\frac{1+\d|\nabla u|^2}{(1-u)^2}$, even more general nonlinearity $g(u)(1+\d|\nabla u|^2)$, where $g: [0,1)\rightarrow\mathbb{R}_+$ satisfying
\begin{center}
	$g$ is a $C^2$, positive, nondecreasing and convex function such that $\lim_{u\rightarrow1^-}g(u)=+\infty$, $\int_0^1g(s)ds=+\infty$.
\end{center} 
We believe the argument there is not rigorous, since when passing to the limit, it is not clear why $\lim_{t\rightarrow+\infty}\nabla k(x,t)=m(x)$ and $\lim_{t\rightarrow+\infty}\triangle k(x,t)=\triangle m(x)$. Instead, in this paper we adapt the argument in \cite{BCMR} to give a detailed proof.

The pull-in voltage $\l_\d^*$ has been estimated in \cite{Wq}:
\begin{align}\label{eqn-intro.lower bound for ld}
	\frac4{27}\frac{||\xi||_\infty}{||\xi||_\infty^2+\d||\triangle\xi||_\infty^2}\leq\l_\d^*\leq\l^*.
\end{align}
We show in this paper that 
\[
	\lim_{\d\rightarrow\infty}\l_\d^*=0.
\]
This improves the upper bound in \eqref{eqn-intro.lower bound for ld} dramatically for $\d\gg1$.

From Theorem \ref{thm-intro.global and quenching}, we know that the solution quenches in finite time when $\l\geq\l_\d^*$, denoted $T=T(\l,\d)<\infty$. The precise definition of quenching time $T$ is 
\[
	T=\sup\{t>0:\,||u(\cdot,\tau)||_\infty<1,\,\forall\tau\in[0,t]\}.
\]
It has been shown in \cite{Wq} that $T=\mathcal{O}\left((\l-\l^*_\d)^{-\frac12}\right)$, provided that $\l>\l^*_\d$ is sufficiently close to $\l_\d^*$ and $\d\ll1$. For $\l\gg\l^*_\d$, we show that the following result:

\begin{theorem}\label{thm-intro.upper bound for T}
	The quenching time $T=T(\l,\d)$ for the solution $u$ of \eqref{F} verifies
	\[
		\limsup_{\l\rightarrow\infty}\l T=\frac13.
	\]
\end{theorem}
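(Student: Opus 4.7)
The plan is to sandwich $\l T$ between two bounds that both tend to $1/3$: a lower bound coming from an ODE satisfied by $\|u(\cdot,t)\|_\infty$, and an upper bound coming from averaging \eqref{F} against the principal Dirichlet eigenfunction and invoking Jensen's inequality.

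For the lower bound $\liminf_{\l\to\infty}\l T\geq 1/3$, let $M(t):=\|u(\cdot,t)\|_{L^\infty(\O)}$. Since $u=0$ on $\p\O$, for $t>0$ the maximum is attained at some interior point $x^*(t)\in\O$, and there $\nabla u(x^*,t)=0$ and $\triangle u(x^*,t)\leq0$. Using the standard envelope/Danilov formula for the time derivative of the sup-norm of a classical solution, at a.e.\ $t\in[0,T)$
$$M'(t)=u_t(x^*(t),t)=\triangle u(x^*,t)+\l\frac{1+\d|\nabla u(x^*,t)|^2}{(1-u(x^*,t))^2}\leq\frac{\l}{(1-M(t))^2}.$$
Separating variables and integrating from $0$ to $T$ with $M(0)=0$ and $M(t)\to 1$ as $t\to T^-$ gives $\frac{1}{3}=\int_0^1(1-s)^2\,ds\leq\l T$. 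Thus $\l T\geq 1/3$ for every admissible $\l$, which in particular yields $\liminf_{\l\to\infty}\l T\geq 1/3$.

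For the upper bound $\limsup_{\l\to\infty}\l T\leq 1/3$, let $\phi_1>0$ denote the first Dirichlet eigenfunction of $-\triangle$ on $\O$ with eigenvalue $\l_1>0$, normalized by $\int_\O\phi_1\,dx=1$, and set $E(t):=\int_\O u(x,t)\phi_1(x)\,dx\in[0,1)$. Multiplying \eqref{F} by $\phi_1$, integrating by parts twice (boundary contributions vanish since $u=\phi_1=0$ on $\p\O$), and discarding the nonnegative fringing term produces
$$E'(t)=-\l_1 E(t)+\l\int_\O\frac{1+\d|\nabla u|^2}{(1-u)^2}\phi_1\,dx\geq-\l_1 E(t)+\l\int_\O\frac{\phi_1}{(1-u)^2}\,dx.$$
Since $\phi_1\,dx$ is a probability measure and $s\mapsto 1/(1-s)^2$ is convex on $[0,1)$, Jensen's inequality gives $\int_\O\phi_1/(1-u)^2\,dx\geq 1/(1-E)^2$, whence $E'(t)\geq \l/(1-E(t))^2-\l_1$. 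The auxiliary ODE $y'=\l/(1-y)^2-\l_1$, $y(0)=0$, quenches at time $T_0=\int_0^1(1-s)^2/(\l-\l_1(1-s)^2)\,ds\leq 1/(3(\l-\l_1))$. If we had $T>T_0$, the comparison principle for scalar ODEs would give $E(t)\geq y(t)\to 1$ as $t\to T_0^-$; but at $t=T_0<T$ the solution $u(\cdot,T_0)$ is continuous with $u<1$ on $\bar\O$, so $E(T_0)<\int_\O\phi_1\,dx=1$, a contradiction. Hence $T\leq T_0$ and $\l T\leq \l/(3(\l-\l_1))\to 1/3$.

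\textbf{Main obstacle.} The delicate step is the lower bound: justifying the pointwise inequality $M'(t)\leq\l/(1-M(t))^2$ requires care because the maximum point $x^*(t)$ need not be unique and $M$ is only Lipschitz. This is handled in the standard way, by showing $M$ is locally absolutely continuous on $[0,T)$ and that at every point of differentiability the derivative equals $u_t(x^*(t),t)$ for \emph{any} $x^*(t)\in\arg\max u(\cdot,t)$; at every such point the first and second order conditions on $u$ deliver the claimed bound regardless of which maximizer is chosen. The upper bound is essentially an exercise in eigenfunction averaging once Jensen is available, and the asymptotic evaluation of $T_0$ as $\l\to\infty$ is routine.
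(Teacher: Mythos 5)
Your proposal is correct, and in fact establishes the stronger statement $\lim_{\l\to\infty}\l T=\frac13$. The two halves relate to the paper as follows. Your lower bound is in substance identical to the paper's Proposition \ref{prop-lower estimate}, which derives exactly the differential inequality $(1-U)^2U_t\le\l$ for $U(t)=\max_{\O}u(\cdot,t)$ (handling the a.e.-differentiability issue you flag in the same standard way); integrating over $(0,T)$ gives $\l T\ge\frac13$ for every admissible $\l$, a consequence the paper needs for the ``$=$'' in the theorem but never states explicitly. Your upper bound takes a genuinely different route from the paper's Proposition \ref{prop-upper bound large lambda}: instead of the Kaplan eigenfunction-plus-Jensen argument, the paper tests \eqref{F} against $\varphi(1-u)^2$ for an arbitrary nonnegative $\varphi$ vanishing on $\partial\O$, which makes the nonlinearity integrate exactly to $\l\int_\O\varphi\,dx$ with no convexity needed and yields the explicit bound $T\le\|\varphi\|_1/(3\l\|\varphi\|_1-\|\triangle\varphi\|_1)$. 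Both methods give $\limsup_{\l\to\infty}\l T\le\frac13$; yours requires $\l>\l_1$, the paper's requires $3\l\|\varphi\|_1>\|\triangle\varphi\|_1$, so the trade-off is negligible. One consequence worth flagging: your rigorous bound $\l T\ge\frac13$ contradicts the paper's numerically motivated conjecture that $\lim_{\l\to\infty}\l T=0$ when $\d>0$ (Table \ref{table-quenching time} reports values of $\l T$ well below $\frac13$ for large $\l$). Since the argument is airtight --- the fringing term vanishes at the interior maximum because $\nabla u=0$ there, so $\d$ cannot accelerate the growth of $\max_\O u$ beyond the $\d=0$ rate --- those numerical values must be discretization artifacts, and the limit is $\frac13$ for every $\d\ge0$.
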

This result is valid for \eqref{F} with or without fringing term. However, it is known that the quenching time for \eqref{F} without fringing term satisfies
\[
	\lim_{\l\rightarrow\infty}\l T=\frac13.
\]
The numerical results in section 6.1 suggest that $\lim_{\l\rightarrow\infty}\l T=0$. Acctually, with the similar argument in \cite{YZ}, we show that
\[
	T\leq\frac{||\phi||_1}{3\l||\phi||_1+||\triangle\phi||_1},
\]
where $\phi\geq0$ is any $C^2$ function in $\O$ and $\phi=0$ on $\partial\O$, and $||\cdot||_1$ is the $L^1$ norm. This implies that
\[
	T\lesssim\frac1\l,
\]
if $\l\gg1$. The notation $a\lesssim b$ means there exists some constant $C>0$ such that $a\leq Cb$. This is a finer decaying rate than $\mathcal{O}(\l^{-\frac12})$, which obtained in \cite{Wq}.
Besides the quenching time, we are also interested in the quenching set. The mathematical definition of quenching set is
\[
	\Sigma=\{x\in\bar{\O}:\,\exists\,(x_n,t_n)\in\O_T,\ s.t.\ x_n\rightarrow x,\ t_n\rightarrow T,\ u(x_n,t_n)\rightarrow1\}.
\]
We assume $\O\subset\mathbb{R}^n$ is a convex bounded domain. By the moving plane argument, we assert that the quenching set is a compact subset of $\O$. And if $\O=B_R$, the ball centered at the origin with the radius $R$, then the quenching solution is radially symmetric (cf. \cite{GNN}) and the only quenching point is the origin.
\begin{theorem}\label{quench at origin}
    Suppose $\O=B_R$. If $\l>\l_\d^*$, then the solution quenches only at $r=0$. That is, the origin is the unique quenching point.
\end{theorem}

To understand the quenching behavior of the finite time quenching solution to \eqref{F}, we begin with the one-side quenching esitmate, which has been derived in \cite{LW} for only one dimensional case.
\begin{lemma}[One-side quenching estimate]\label{lemma-upper bound}
	If $\O\subset\Rn$ is a convex bounded domain, and $u(x,t)$ is a quenching solution of \eqref{F} in finite time, then there exists a bounded positive constant $M>0$ such that 
\[
	M(T-t)^{\frac13}\leq1-u(x,t),
\]
for all $\O\times(0,T]$. Moreover, $u_t\rightarrow+\infty$ as $u$ touches down.
\end{lemma}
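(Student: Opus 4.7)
The plan is to follow the classical Friedman--McLeod strategy: derive a pointwise lower bound $u_t\geq\e(1-u)^{-2}$ on a well-chosen interior subdomain enclosing the quenching set $\Sigma$, then integrate this differential inequality in $t$ to recover the cubic-root bound. The central device is the auxiliary function $J:=u_t-\e(1-u)^{-2}$, for which I will derive a parabolic inequality and apply a maximum principle.

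The first step is a direct computation of the PDE satisfied by $J$. Differentiating \eqref{F} in $t$, substituting $\triangle u$ from the equation itself, and re-expressing $u_t$ and $\n u_t$ in terms of $J$ and $\n J$, one obtains after a routine calculation
\[
    J_t-\triangle J - \frac{2\l\d\,\n u\cdot\n J}{(1-u)^2} - \frac{2\l(1+\d|\n u|^2)}{(1-u)^3}\,J \;=\; \e\left[\frac{4\l\d|\n u|^2}{(1-u)^5}+\frac{6|\n u|^2}{(1-u)^4}\right]\;\geq\;0.
\]
Thus $J$ is a supersolution of a linear parabolic equation whose coefficients are bounded on every compact subset of $\{u<1\}$; the fringing parameter $\d$ contributes only a nonnegative term, so its presence does not obstruct the argument.

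The second step, which I expect to be the main obstacle, is controlling $J$ on the parabolic boundary. Directly on $\p\O$ one has $u=u_t=0$ while $(1-u)^{-2}=1$, so $J=-\e<0$; the maximum principle therefore cannot be applied on $\O\times(0,T)$ as it stands. To bypass this I exploit the compactness of $\Sigma$ in the convex domain $\O$ established earlier via the moving plane argument: fix an open set $\O'$ with $\Sigma\ss\O'\ss\ss\O$, so that $u\leq 1-\eta_0<1$ uniformly on $\p\O'\times[0,T]$. Next, $v:=u_t$ solves the linear parabolic equation obtained by differentiating \eqref{F} in $t$, with $v|_{\p\O}=0$ and $v(\cdot,0)=\l>0$; the strong maximum principle gives $v>0$ throughout $\O\times(0,T)$. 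Parabolic regularity (applicable in a neighborhood of $\p\O'$ because $u$ stays uniformly below $1$ there) together with a parabolic Harnack-type bound upgrades this to a uniform positive lower bound $u_t\geq c_1>0$ on $\p\O'\times[0,T]$. Taking $\e:=\min\{\l,\,c_1\eta_0^{2}\}$ then forces $J\geq 0$ on the parabolic boundary of $\O'\times(0,T)$, and at $t=0$ (where $J=\l-\e$). Applying the maximum principle on $\O'\times[0,T-\tau]$ for each $\tau>0$ (on which the zero-order coefficient of $J$ is bounded, so an $e^{-Kt}$ substitution absorbs its sign) and sending $\tau\ra 0$ yields $u_t\geq\e(1-u)^{-2}$ throughout $\O'\times(0,T)$.

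The final step is integration. Rewriting the bound as $\p_t(1-u)^{3}\leq -3\e$ and integrating from $t$ to $T$ at each fixed $x\in\O'$, using $(1-u(x,T))^{3}\geq 0$, gives $1-u(x,t)\geq(3\e)^{1/3}(T-t)^{1/3}$ on $\O'\times(0,T]$. On the complement $\O\setminus\O'$ the solution stays uniformly away from $1$, so $1-u\geq\eta_0\geq\eta_0 T^{-1/3}(T-t)^{1/3}$ holds trivially. Combining and setting $M:=\min\{(3\e)^{1/3},\,\eta_0 T^{-1/3}\}$ yields the desired global estimate. The concluding claim $u_t\ra+\infty$ as $u$ touches down is immediate from $u_t\geq\e(1-u)^{-2}$.
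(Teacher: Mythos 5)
Your proof is correct and follows essentially the same Friedman--McLeod route as the paper: the auxiliary function $J=u_t-\e(1-u)^{-2}$, a maximum principle applied on an interior region containing the (compact) quenching set with $\e$ calibrated to the parabolic boundary data, and integration of $u_t\ge\e(1-u)^{-2}$ to get the cubic-root bound. The only cosmetic difference is that the paper first passes to $v=\zeta_{\l,\d}(u)$ and works with $J^\e=v_t-\e\rho_{\l,\d}(v)$, so that the convexity of $\rho_{\l,\d}$ supplies the sign of the source term in place of your explicit computation of the gradient contributions.
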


Acctually, we show in this paper that under certain condition (namely \eqref{intro.condition}), the solution quenches in finite time $T$ with the rate 
\[
	1-u(x,t)\sim(3\l(T-t))^{\frac13},
\]
as $t\rightarrow T^-$, provided $\O\in\mathbb{R}$ or $\O\in\mathbb{R}^n$, $n\geq2$, is radially symmetric domain. 

This result comes from the similarity variables, which is first suggested in \cite{GK}-\cite{GK1989}. Let us make the similarity transformation at some point $a\in\O_\eta$ as in \cite{GK} and \cite{Guo}:
\begin{align}\label{ysw}
	y=\frac{x-a}{\sqrt{T-t}},\quad s=-\log{(T-t)},\quad u(x,t)=1-(T-t)^{\frac13}w_a(y,s),
\end{align}
where $\O_\eta=\{x\in\O:\,dist(x,\partial\O)>\eta\}$, for some $\eta\ll1$. First, the point $a$ can be identified as a non-quenching point, if $w_a(y,s)\rightarrow\infty$, as $s\rightarrow+\infty$ uniformly in $|y|\leq C$, for any constant $C>0$. This is called the {\sl nondegeneracy} phenomena in \cite{GK1989}. This property is not difficult to derive. It follows immediately from the comparison principle and the nondegeneracy of \eqref{eqn-without fringing} obtained in \cite{Guo}.

The basis of the method, the similarity variables in \cite{Guo}, is the scaling property of \eqref{eqn-without fringing}, the fact that if $u$ solves it near $(0,0)$, then so do the rescaled functions 
\begin{equation}\label{u_gamma}
	1-u_\gamma(x,t)=\gamma^{-\frac23}\left[1-u(\gamma x,\gamma^2t)\right],
\end{equation}
for each $\gamma>0$. If $(0,0)$ is a quenching point, then the asymptotics of the quenching are encoded in the behavior of $u_\gamma$ as $\gamma\rightarrow0$. Unfortunately, compared with \eqref{eqn-without fringing}, \eqref{F} doesn't possess the nice property. That is, it is not rescale-invariant. This is where the difficulty in analysis arises and the condition \eqref{intro.condition} comes from. Essentially, we characterize the asymptotic behavior near a singularity, assuming a certain upper bound on the rate of the gradient's blow-up. The condition \eqref{intro.condition} in some degree forces the solution of \eqref{F} converges to the {\sl self-similar} solution of \eqref{eqn-without fringing} as $t\rightarrow T^-$. We call $u$ is the self-similar solution to \eqref{eqn-without fringing}, if $u$ defines on $\mathbb{R}^n\times(0,+\infty)$ and  $u_\gamma=u$ for every $\gamma$ (see \eqref{u_gamma}).

Hence, the study of the asymptotic behavior of $u$ near the singularity is equivalent to understand the behavior of $w_a(y,s)$, as $s\rightarrow+\infty$, which satisfies the equation:
\begin{equation}\label{eqn-intro.wa}
	\frac{\partial w_a}{\partial s}=\triangle w_a-\frac y2\cdot\nabla w_a+\frac13w_a-\frac\l{w_a^2}-\l\d e^{\frac s3}\frac{|\nabla w_a|^2}{w_a^2}.
\end{equation}

\begin{theorem}\label{thm-intro.asymptotics}
	Suppose $w_a$ is the solution to \eqref{eqn-intro.wa} quenching at $x=a$ in finite time $T$. Assume further that 
\begin{align}\label{intro.condition}
	\int_{s_0}^\infty se^{\frac s3}\int_{B_s}\rho|\nabla w_a|^2dyds<\infty,
\end{align}
for some $s_0\gg1$, where $\rho(y)=e^{-\frac{|y|^2}4}$, $B_s$ is defined in \eqref{eqn-Bs}. Then $w_a(y,s)\rightarrow w_{a\,\infty}(y)$, as $s\rightarrow\infty$ uniformly on $|y|\leq C$, where $C>0$ is any bounded constant, and $w_{a\,\infty}(y)$ is a bounded positive solution of
\begin{equation}\label{stationary w-eqn}
	\triangle w-\frac12y\cdot\nabla w+\frac13w-\frac\l{w^2}=0
\end{equation}
in $\mathbb{R}^n$. Moreover, if $\O\in\mathbb{R}$ or $\O\in\mathbb{R}^n$, $n\geq2$, is a convex bounded domain, then we have
	\[
		\lim_{t\rightarrow T^-}(1-u(x,t))(T-t)^{-\frac13}=(3\l)^{\frac13}
	\] 
uniformly on $|x-a|\leq C\sqrt{T-t}$ for any bounded constant $C$.
\end{theorem}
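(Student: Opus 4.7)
The plan is to implement a Giga--Kohn self-similar-variables strategy on \eqref{eqn-intro.wa}, using \eqref{intro.condition} to tame the exponentially growing fringing perturbation $\lambda\delta e^{s/3}|\nabla w_a|^2/w_a^2$. First, recast \eqref{eqn-intro.wa} in divergence form with the Gaussian weight $\rho(y)=e^{-|y|^2/4}$ and introduce the Lyapunov functional
$$E[w](s) = \int_{B_s}\left(\frac12|\nabla w|^2 - \frac16 w^2 - \frac{\lambda}{w}\right)\rho\,dy.$$
Multiplying \eqref{eqn-intro.wa} by $\rho w_{a,s}$ and integrating by parts gives
$$\frac{dE}{ds} = -\int_{B_s}\rho\, w_{a,s}^2\,dy \;-\; \lambda\delta e^{s/3}\int_{B_s}\rho\,w_{a,s}\frac{|\nabla w_a|^2}{w_a^2}\,dy \;+\; I_{\partial B_s}(s),$$
where the boundary contributions $I_{\partial B_s}$ decay thanks to $\rho$. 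By \lem{lemma-upper bound} one has $w_a\geq M>0$ uniformly, so $w_a^{-2}$ is harmless. Combining interior parabolic Schauder estimates (which bound $\|w_{a,s}\|_{L^\infty}$ once $w_a$ is bounded below) with \eqref{intro.condition}, the cross term is controlled by a constant multiple of $e^{s/3}\int_{B_s}\rho|\nabla w_a|^2\,dy$, which is $L^1_s$ by hypothesis. Consequently $E$ stays bounded, and $\int_{s_0}^{\infty}\int_{B_s}\rho\, w_{a,s}^2\,dy\,ds<\infty$.

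Next I would extract a subsequential limit. The integrability just obtained, together with \eqref{intro.condition}, produces a sequence $s_k\to\infty$ along which both $\|w_{a,s}(\cdot,s_k)\|_{L^2(\rho)}\to 0$ and $s_k e^{s_k/3}\int\rho|\nabla w_a(\cdot,s_k)|^2\,dy\to 0$. Shifting time to $w_a(\cdot,s_k+\tau)$ for $\tau\in[-1,1]$ and invoking parabolic Schauder estimates (the right-hand side of \eqref{eqn-intro.wa} is uniformly smooth because of $w_a\geq M$), the shifted family is precompact in $C^{2,\alpha}_{\mathrm{loc}}(\mathbb{R}^n)$. A diagonal subsequence yields a limit $w_{a,\infty}$, which is a bounded positive entire solution of the stationary equation \eqref{stationary w-eqn} because along this subsequence both the $s$-derivative and the fringing term vanish in the limit.

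For the ``moreover'' statement, in one dimension or in the radial setting \eqref{stationary w-eqn} reduces to an ODE in $|y|$ that, via phase-plane analysis together with the Gaussian damping in the convective term, admits the constant $(3\lambda)^{1/3}$ (the unique positive root of $\frac{w}{3}-\frac{\lambda}{w^2}=0$) as its only bounded positive solution. Uniqueness of the limit combined with $C^{2,\alpha}_{\mathrm{loc}}$-precompactness of the full family $\{w_a(\cdot,s)\}_{s\gg 1}$ promotes subsequential convergence to full convergence, and inverting the similarity change \eqref{ysw} yields
$$\lim_{t\to T^-}(1-u(x,t))(T-t)^{-1/3}=(3\lambda)^{1/3}$$
uniformly on $|x-a|\leq C\sqrt{T-t}$.

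The main obstacle is the fringing perturbation $\lambda\delta e^{s/3}|\nabla w_a|^2/w_a^2$, whose prefactor grows exponentially in $s$ and would derail any Lyapunov argument without extra information. The additional factor of $s$ in \eqref{intro.condition} is exactly what is needed to (i) close the energy identity as an $L^1_s$ estimate of the cross term and (ii) guarantee a subsequence on which the perturbation genuinely vanishes. A secondary subtlety, which is why the sharp constant $(3\lambda)^{1/3}$ is identified only in one dimension or in the radial case, is the Liouville-type rigidity for \eqref{stationary w-eqn}: classifying bounded entire solutions of the weighted stationary equation in higher non-radial dimensions is a delicate matter beyond the scope of the present argument.
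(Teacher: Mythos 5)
Your proposal is correct and follows essentially the same Giga--Kohn route as the paper: the same Gaussian-weighted energy $E$ on the growing balls $B_s$, the same energy identity in which the $e^{s/3}$ cross term is absorbed using \eqref{intro.condition} and the boundary terms decay like $s^ne^{-s^2/4}$, compactness of the time-shifted family plus exclusion of $w_{a\,\infty}\equiv\infty$ by nondegeneracy, and the Fila--Hulshof/Guo Liouville classification of bounded solutions of \eqref{stationary w-eqn} to identify the constant $(3\l)^{\frac13}$. One small correction to your second paragraph: $w_{a,s}$ is \emph{not} uniformly bounded on $B_s$ --- the drift term $\frac y2\cdot\nabla w_a$ forces only $|w_{a,s}|\lesssim 1+|y|\lesssim 1+s$ there --- so the cross term is controlled by $s\,e^{\frac s3}\int_{B_s}\rho|\nabla w_a|^2dy$ rather than by $e^{\frac s3}\int_{B_s}\rho|\nabla w_a|^2dy$ alone; this linear growth is exactly what consumes the extra factor of $s$ in \eqref{intro.condition}, consistent with your closing paragraph but not with the intermediate claim.
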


From Theorem \ref{thm-intro.asymptotics}, one hardly tells the effects of the fringing term $\d|\nabla u|^2$ on the asymptotic behavior near the singularity. Therefore, it seems to be necessary to find the refined asymptotic expansion near the singularity. As the first attempt in this direction, we derive a formal expansion as in \cite{KL} and \cite{GPW}. Let us consider $\O\subset\mathbb{R}^n$ be a radially symmetrical domain. Then, for $r\ll1$ and $T-t\ll1$, we have
\begin{equation}
	u\sim1-\left[3\l(T-t)\right]^{\frac13}\left(1-\frac{3^{\frac13}n}{8\d\l^{\frac23}}(T-t)^{\frac13}+\frac{3^{\frac13}}{4\d\l^{\frac23}}\frac{r^2}{(T-t)^{\frac23}}+\cdots\right)^{\frac13}.
\end{equation}
This expansion is quite different from the one for \cite{GPW}:
\[
	u\sim-1+[3\l(T-t)]^{\frac13}\left(1-\frac{3n}{4|\log{(T-t)}|}+\frac{3r^2}{8(T-t)|\log{(T-t)}|}+\cdots\right)^{\frac13}.
\]
We believe the difference is due to the fringing term, which can be clearly seen from the method of dominant balance, see detailed analysis in section 5.6. 

Finally, as the supplements, we numerically compute the pull-in voltages of \eqref{F} with various $\d$ and the quenching times of \eqref{F} with various $\d$ and $\l>\l_\d^*$ using {\it bvp4c} in Matlab. Furthermore, we solve \eqref{F} numerically using an appropriate finite difference scheme. The numerical simulations validate the results obtained in the previous sections.

\section{Global existence or quenching in finite time}

\setcounter{equation}{0}

Motivated by \cite{WY}, we make the following transformation
\begin{align}\label{transformation}
    v(x,t):=\zeta_{\l,\d}(u(x,t))=\int_0^{u(x,t)}e^{\frac{\l\d}{1-s}}ds,
\end{align}
then $v(x,t)$ satisfies
\begin{equation}\tag{$V_{\l,\d}$}\label{V}
    \left\{ \begin{aligned}
            &v_t-\triangle v = \l\rho_{\l,\d}(v),\quad (x,t)\in\O_T\\
            &v(x,t)=0, \quad (x,t)\in\partial\O_T\\
            &v(x,0)=0, \quad x\in\O,
        \end{aligned} \right.
\end{equation}
where $\rho_{\l,\d}=\xi_{\l,\d}\circ\zeta_{\l,\d}^{-1}(v)$, $\xi_{\l,\d}(u):=\frac{e^{\frac{\l\d}{1-u}}}{(1-u)^2}$. Since $\xi_{\l,\d}$ and $\zeta_{\l,\d}$ are increasing in $[0,1)$ and  $\lim_{u\rightarrow1^-}\zeta_{\l,\d}(u)=\infty$, $\rho_{\l,\d}$ is also increasing in $\R_+$. It is also not difficult to check that $\rho_{\l,\d}(v)$ satisfies the following properties:
\begin{enumerate}
    \item $\rho_{\l,\d}(v)$, $\rho_{\l,\d}'(v)$ and $\rho_{\l,\d}''(v)>0$, for $v\in\R_+$. In fact, through direct computations we get
\begin{align}\label{rho'}
    \rho_{\l,\d}'(v) = \frac1{(1-u)^3}\left(2+\frac{\l\d}{1-u}\right);\quad
    \rho_{\l,\d}''(v) = \frac2{(1-u)^4}e^{-\frac{\l\d}{1-u}}\left(3+\frac{2\l\d}{1-u}\right).
\end{align}
\item $\int_{v_0}^\infty\frac{ds}{\rho_{\l,\d}(s)}<\infty$, for any $v_0\in(0,\infty)$, since
\begin{align*}
    \int_{v_0}^\infty\frac{ds}{\rho_{\l,\d}(s)}&=\int_{\zeta_{\l,\d}^{-1}(v_0)}^1\frac{\zeta_{\l,\d}'(u)du}{\frac{e^{\frac{\l\d}{1-u}}}{(1-u)^2}}=\int_{\zeta_{\l,\d}^{-1}(v_0)}^1(1-u)^2du
    =\frac13\left(1-\zeta_{\l,\d}^{-1}(v_0)\right)^3<\infty.
\end{align*}
\end{enumerate}

\begin{lemma}[Uniqueness]
    Suppose $u_1(x,t)$ and $u_2(x,t)$ are solutions of \eqref{F} on $\O_T:=\O\times[0,T]$ such that $||u_i||_{L^\infty(\O_T)}<1$ for $i=1,2$, then $u_1=u_2$.
\end{lemma}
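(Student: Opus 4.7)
The plan is to pass to the transformed equation \eqref{V} rather than attack \eqref{F} directly, since the right-hand side $\lambda(1+\delta|\nabla u|^2)/(1-u)^2$ depends nonlinearly on $\nabla u$ and so does not admit a straightforward comparison or energy argument. Because $\zeta_{\lambda,\delta}:[0,1)\to[0,\infty)$ is a smooth strictly increasing bijection, setting $v_i:=\zeta_{\lambda,\delta}(u_i)$ gives a one-to-one correspondence between solutions of \eqref{F} with $\|u_i\|_\infty<1$ and solutions of \eqref{V} that are uniformly bounded on $\O_T$. Thus it suffices to prove uniqueness for \eqref{V}.

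For the transformed problem, observe that the $L^\infty$ bound $\|u_i\|_\infty<1$ translates into a uniform bound $\|v_i\|_{L^\infty(\O_T)}\leq M$ for some $M<\infty$. On the interval $[0,M]$ the function $\rho_{\lambda,\delta}$ is $C^2$ with $\rho'_{\lambda,\delta}>0$ (from the computation \eqref{rho'}), hence globally Lipschitz there with some constant $L=L(\lambda,\delta,M)$. Setting $w:=v_1-v_2$, subtracting the two copies of \eqref{V} yields
\begin{equation*}
w_t-\triangle w=\lambda\,c(x,t)\,w,\qquad w|_{\partial\O}=0,\quad w(\cdot,0)=0,
\end{equation*}
where $c(x,t):=\int_0^1\rho'_{\lambda,\delta}\bigl(\theta v_1(x,t)+(1-\theta)v_2(x,t)\bigr)\,d\theta$ satisfies $0\leq c\leq L$.

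The natural finishing step is an $L^2$ energy estimate: multiply by $w$, integrate over $\O$, and use the Dirichlet boundary condition to get
\begin{equation*}
\frac12\frac{d}{dt}\int_\O w^2\,dx+\int_\O|\nabla w|^2\,dx=\lambda\int_\O c\,w^2\,dx\leq\lambda L\int_\O w^2\,dx.
\end{equation*}
Since $w(\cdot,0)\equiv 0$, Gronwall's inequality forces $\int_\O w^2\,dx\equiv 0$ on $[0,T]$, so $v_1\equiv v_2$ and, by invertibility of $\zeta_{\lambda,\delta}$, $u_1\equiv u_2$.

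There is no real obstacle once the change of variables \eqref{transformation} is in hand; the only point requiring care is verifying that the uniform strict bound $\|u_i\|_\infty<1$ is genuinely used to keep $c$ (equivalently $\rho'_{\lambda,\delta}$) bounded — if $u_i$ were allowed to touch $1$, the Lipschitz constant would blow up and the Gronwall step would fail, which is consistent with the fact that the equation can have multiple continuations past a quenching time.
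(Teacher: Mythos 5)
Your proposal is correct and follows the paper's strategy up to the last step: both arguments pass to the transformed problem \eqref{V}, observe that $\|u_i\|_{L^\infty(\O_T)}<1$ forces $\|v_i\|_{L^\infty(\O_T)}<\infty$ so that the difference $w=v_1-v_2$ satisfies a linear heat equation $w_t-\triangle w=\l c(x,t)w$ with a bounded coefficient and zero initial and boundary data. Where you diverge is in how you kill $w$: the paper uses a duality argument, solving the backward adjoint problem with arbitrary terminal data $\theta\in C_0(\O)$ and integrating by parts to conclude $\int_\O \hat v(x,T_1)\theta\,dx=0$ for all $\theta$ and all $T_1$; you instead multiply by $w$, integrate over $\O$, and apply Gronwall. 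Your route is the more elementary of the two, since it avoids invoking solvability of the backward problem (the paper cites \cite{LSU} for this), and your mean-value representation $c=\int_0^1\rho'_{\l,\d}(\theta v_1+(1-\theta)v_2)\,d\theta$ is a cleaner justification of the boundedness of the coefficient than the paper's remark that boundedness of $\rho_{\l,\d}(v_i)$ alone implies boundedness of the difference quotient. Your closing observation about why the strict bound $\|u_i\|_\infty<1$ is essential is also accurate. The only point you should make explicit is the regularity needed to justify the energy identity (that $w$ is a classical solution so that the integration by parts is legitimate), but this is immediate for the solutions under consideration.
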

\begin{proof}
    Let us denote $v_i(x,t)$ the solutions of \eqref{V}, i.e. $v_i=\zeta_{\l,\d}(u_i)$, $i=1,2$. Then $\hat{v}=v_1-v_2$ satisfies
\begin{align}\label{hat_v}
    \hat{v}_t-\triangle\hat{v} = \l[\rho_{\l,\d}(v_1)-\rho_{\l,\d}(v_2)] = \l\frac{\rho_{\l,\d}(v_1)-\rho_{\l,\d}(v_2)}{\hat{v}}\hat{v}:=\l f\hat{v}.
\end{align}
The condition $||u_i||_{L^\infty(\O_T)}<1$ is equivalent to $||v_i||_{L^\infty(\O_T)}<\infty$, $i=1,2$. This implies that $||\rho_{\l,\d}(v_i)||_{L^\infty(\O_T)}<\infty$, $i=1,2$. Therefore, $||f||_{L^\infty{(\O_T)}}<\infty$.

We now fix $T_1\in(0,T)$ and consider the solution $\phi$ of the problem
\begin{equation}
    \left\{ \begin{aligned}
            &\phi_t+\triangle\phi+\l f\phi=0,\quad (x,t)\in\O_{T_1}\\
            &\phi(x,T_1)=\theta(x)\in C_0(\O), \quad x\in\O\\
        &\phi(x,t)=0,\quad (x,t)\in\partial\O_{T_1}.
        \end{aligned} \right.
\end{equation}
The standard linear theory gives the unique and bounded solution (cf. Theorem 8.1, \cite{LSU}).

Multiplying $\phi$ to \eqref{hat_v} and integrating in $\O_{T_1}$ on both sides, it yields by integration by parts that
\begin{align*}
    \int_\O \hat{v}(x,T_1)\theta(x)dx=0,
\end{align*}
for arbitrary $T_1\in(0,T)$ and $\theta(x)\in C_0(\O)$. This implies that $\hat{v}\equiv0$.
\end{proof}

\subsection{Global existence}

\begin{theorem}[Global existence]\label{thm-global existence}
    For every $\l\leq\l_\d^*$, there exists a unique global solution $u(x,t)$ of \eqref{F}, which monotonically converges as $t\rightarrow\infty$ to the minimal solution $u_\l$ of \eqref{SF}.
\end{theorem}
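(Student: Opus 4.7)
The plan is to argue at the level of the transformed problem \eqref{V}, where the nonlinearity $\rho_{\l,\d}$ is smooth, positive, strictly increasing and convex, and then transfer monotonicity and convergence back to $u$ via the increasing change of variables $\zeta_{\l,\d}$. This is the \cite{BCMR}-style strategy the authors announce; its advantage over a direct attack on \eqref{F} is that \eqref{V} is semilinear (not quasilinear), so interior parabolic regularity is straightforward on any range where $\rho_{\l,\d}$ is Lipschitz.

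First I would show that $t\mapsto v(x,t)$ is non-decreasing. Differentiating \eqref{V} in $t$, the function $w:=v_t$ satisfies the linear parabolic problem $w_t-\triangle w=\l\rho_{\l,\d}'(v)w$ with zero lateral data and $w(x,0)=\l\rho_{\l,\d}(0)>0$, so the maximum principle yields $v_t\ge 0$. Next, for $\l<\l_\d^*$ I would use the minimal stationary solution $u_\l$ of \eqref{SF} from \cite{WY} and set $\bar v(x):=\zeta_{\l,\d}(u_\l(x))$, which is a time-independent supersolution of \eqref{V}. Since $v(x,0)=0\le\bar v$, the comparison principle gives $0\le v(x,t)\le\bar v(x)$; equivalently $0\le u(x,t)\le u_\l(x)<1$, so the local-in-time solution continues to a global one.

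With monotonicity and the uniform bound in hand, $v_\infty(x):=\lim_{t\to\infty}v(x,t)$ exists pointwise with $0\le v_\infty\le\bar v$. To identify $v_\infty$ with $\bar v$ I would use a translation argument: the shifts $v^\tau(x,t):=v(x,t+\tau)$ solve \eqref{V} with initial datum $v(\cdot,\tau)$ and are uniformly bounded in $L^\infty(\O\times(0,1))$. Since $\rho_{\l,\d}$ is Lipschitz on $[0,\|\bar v\|_\infty]$, standard parabolic $L^p$ and Schauder estimates give uniform $C^{2+\alpha,1+\alpha/2}_{\mathrm{loc}}$ bounds on $\{v^\tau\}$; along any sequence $\tau_k\to\infty$ a subsequence converges to a classical stationary solution of \eqref{V}, which by uniqueness of the pointwise limit must coincide with $v_\infty$. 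Translating via $\zeta_{\l,\d}^{-1}$ produces a stationary solution of \eqref{SF} sandwiched between $0$ and $u_\l$ and positive in $\O$ by the strong maximum principle, hence equal to $u_\l$ by minimality. For the critical case $\l=\l_\d^*$ the extremal stationary solution $u_{\l_\d^*}$ is again provided by \cite{WY} and the same comparison applies; alternatively one passes to the limit in an increasing sequence $\l_n\nearrow\l_\d^*$ using monotonicity of the solution in $\l$.

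The main technical point, and precisely the gap in \cite{Wq} that the authors flag, is the identification of $v_\infty$ as a genuine stationary solution: one needs $\triangle v(\cdot,t)$ to converge in a sense strong enough to pass to the limit in the equation, not merely $v(\cdot,t)$ pointwise. After the transformation to \eqref{V}, this reduces to interior parabolic regularity for a semilinear equation with a Lipschitz right-hand side on the bounded range $[0,\|\bar v\|_\infty]$, which is routine; this is precisely why working with $v$ rather than $u$ is decisive.
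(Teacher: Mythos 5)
Your proposal is correct, and its skeleton coincides with the paper's: pass to the transformed problem \eqref{V}, squeeze $0\le v\le\zeta_{\l,\d}(u_\l)$ between the zero subsolution and the stationary supersolution coming from the minimal solution of \eqref{SF}, obtain $v_t\ge 0$ by differentiating \eqref{V} in time and applying the maximum principle, and then identify the monotone pointwise limit with the minimal steady state. The one step where you genuinely diverge is the identification of the limit as a stationary solution --- which is exactly the delicate point the authors flag in \cite{Wq}. The paper introduces the auxiliary Poisson problem $-\triangle v_1=\l\rho_{\l,\d}(v_{ss})$ with zero boundary data, observes that $\bar v=v-v_1$ solves a heat equation whose forcing tends to $0$ in $L^2(\O)$ (using the Lipschitz bound \eqref{rho'} on the range $[0,u_\l]$), and concludes $v\to v_1$ in $L^2$ by an eigenfunction expansion, whence $v_1=v_{ss}$. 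You instead run a translation-compactness argument: uniform $L^\infty$ bounds plus interior $L^p$ and Schauder estimates give $C^{2+\alpha,1+\alpha/2}_{\mathrm{loc}}$ compactness of the time shifts $v^\tau$, and uniqueness of the pointwise limit forces every subsequential limit to be the (classical, $t$-independent) stationary solution; the boundary condition for the limit follows from the squeeze $0\le v\le\zeta_{\l,\d}(u_\l)$. Both routes are sound; yours buys convergence in a stronger topology and avoids the eigenfunction lemma at the cost of invoking parabolic regularity theory, while the paper's is shorter once the $L^2$ decay of the right-hand side of \eqref{w} is granted. Your separate treatment of the endpoint $\l=\l_\d^*$ is also consistent with the paper, which simply cites \cite{WY} for the existence of the (unique, hence minimal) steady state there.
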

\begin{proof}
    This is standard and follows from the maximum principle combined with the existence of the regular minimal steady-state solutions for $\l\in(0,\l_\d^*)$. Indeed, for any $0<\l\leq\l_\d^*$, from Theorem 1 and Theorem 5, \cite{WY}, there exits a unique minimal solution $u_\l(x)$ of \eqref{SF}. It is clear that $0$ and $u_\l$ are sub- and super-solutions to \eqref{F}, respectively. This implies that there exists a unique global solution $u(x,t)$ of \eqref{F} such that $1>u_\l(x)\geq u(x,t)\geq0$ in $\O\times(0,\infty)$. Let us denote $v_\l=\zeta_{\l,\d}(u_\l)<\infty$. Then, $0\leq v(x,t)\leq v_\l<\infty$, where $v=\zeta_{\l,\d}(u)$.

By differentiating \eqref{V} in time and setting $w=v_t$, we get for any fixed $t_0>0$
\begin{equation*}
    \left\{\begin{aligned}
        w_t-\triangle w= \left[\frac{\l\d}{(1-u)^4}+\frac2{(1-u)^3}\right]w,&\quad (x,t)\in\O_{t_0}\\
        w(x,t)=0,&\quad (x,t)\in\partial\O\times(0,t_0)\\
        w(x,0)\geq0,&\quad x\in\O.
    \end{aligned}\right.
\end{equation*}
Here $\left[\frac{\l\d}{(1-u)^4}+\frac2{(1-u)^3}\right]$ is a locally bounded non-negative function, and by the strong maximum principle, we get that $v_t=w>0$ for $\O_{t_0}$ or $w=0$. The second case can't happen, otherwise $u(x,t)=u_\l(x)$ for any $t>0$. It follows that $w=v_t>0$ for all $\O\times(0,\infty)$. Moreover, since $v(x,t)$ is bounded, the mononicity in time implies that the unique solution $v(x,t)$ converges to some steady state, denoted as $v_{ss}(x)$, as $t\rightarrow\infty$, i.e. $u(x,t)\rightarrow u_{ss}(x)$, as $t\rightarrow\infty$. Hence, $1>u_\l(x)\geq u_{ss}(x)>0$ in $\O$.

Next, we claim that $u_{ss}(x)$ is a solution of \eqref{SF}. Let us consider $v_1(x)$ satisfying
\begin{equation*}
    \left\{\begin{aligned}
        -\triangle v_1&=\l\rho_{\l,\d}(v_{ss}),\quad x\in\O\\
        v_1(x)&=0,\quad x\in\partial\O.
    \end{aligned}\right.
\end{equation*}
Let $\bar{v}(x,t)=v(x,t)-v_1(x)$, then $\bar{v}$ satisfies $\bar{v}(x,0)=-v_1(x)$, $\bar{v}|_{\partial\O\times(0,\infty)}=0$ and
\begin{align}\label{w}
    \bar{v}_t-\triangle\bar{v}=\l\left[\frac{e^{\frac{\l\d}{1-u}}}{(1-u)^2}-\frac{e^{\frac{\l\d}{1-u_{ss}}}}{(1-u_{ss})^2}\right],
\end{align}
in $\O$. The right-hand side of \eqref{w} tends to zero in $L^2(\O)$, as $t\rightarrow\infty$, which follows from
\begin{align*}
    \left|\frac{e^{\frac{\l\d}{1-u}}}{(1-u)^2}-\frac{e^{\frac{\l\d}{1-u_{ss}}}}{(1-u_{ss})^2}\right|
    \overset{\eqref{rho'}}\leq e^{\frac{\l\d}{1-u_\l}}\left[\frac{\l\d}{(1-u_\l)^4}+\frac2{(1-u_\l)^3}\right]|u-u_{ss}|,
\end{align*}
and H\"older's inequality. A standard eigenfunction expansion implies that $w(x,t)$ converges to zero in $L^2(\O)$ as $t\rightarrow\infty$. That is $v(x,t)\rightarrow v_1(x)$, as $t\rightarrow\infty$. Combined with the fact that $v(x,t)\rightarrow v_{ss}(x)$ pointwisely as $t\rightarrow\infty$. We deduce that $v_1(x)=v_{ss}(x)$ in $L^2(\O)$, which implies $v_{ss}(x)$ is also a solution to the stationary equation of \eqref{V} and the corresponding $u_{ss}(x)$ is also a solution to \eqref{SF}. The minimal property of $u_\l$ yields that $u_\l\equiv u_s$ in $\O$, from which follows that for every $x\in\O$, we have $u(x,t)\uparrow u_\l(x)$, as $t\rightarrow\infty$.
\end{proof}

\subsection{Finite-time quenching}

\begin{theorem}[Finite-time quenching]\label{thm-finite time quenching}
    For every $\l>\l_\d^*$, there exits a finite time $T=T(\l,\d)$ at which the unique solution $u(x,t)$ of \eqref{F} quenches.
\end{theorem}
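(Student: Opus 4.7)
The plan is to pass to the transformed problem \eqref{V} via \eqref{transformation}, so that proving finite-time quenching of $u$ is equivalent to proving finite-time blow-up of $\|v(\cdot,t)\|_\infty$. The advantage is that the new nonlinearity $\rho_{\l,\d}$ is convex, strictly positive, strictly increasing, and satisfies $\int^{\infty}ds/\rho_{\l,\d}(s)<\infty$, i.e.\ it is super-linear. As a preliminary, I would reuse the time-differentiation plus strong maximum principle argument already used in Theorem \ref{thm-global existence} to conclude that $v_t>0$ on $\Omega\times(0,T)$, so $t\mapsto v(x,t)$ is strictly monotone increasing.

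The main tool will be Kaplan's eigenfunction method. Let $\phi_1>0$ be the first Dirichlet eigenfunction of $-\triangle$ on $\Omega$, with eigenvalue $\mu_1>0$, normalized so that $\int_\Omega\phi_1\,dx=1$, and set $E(t):=\int_\Omega v(x,t)\phi_1\,dx$. Multiplying \eqref{V} by $\phi_1$, integrating by parts, and invoking Jensen's inequality (with the probability measure $\phi_1\,dx$ and the convex $\rho_{\l,\d}$) yields
$$E'(t)=-\mu_1 E(t)+\l\int_\Omega\rho_{\l,\d}(v)\phi_1\,dx\geq-\mu_1 E(t)+\l\rho_{\l,\d}(E(t)).$$
Super-linearity of $\rho_{\l,\d}$ gives $M>0$ with $\l\rho_{\l,\d}(y)-\mu_1 y\geq\tfrac{\l}{2}\rho_{\l,\d}(y)$ for $y\geq M$, so a separation-of-variables argument shows that, once $E(t_0)\geq M$ at some finite $t_0$, $E$ itself blows up within extra time at most $\tfrac{2}{\l}\int_M^{\infty}ds/\rho_{\l,\d}(s)<\infty$.

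It then remains to guarantee that $E(t_0)\geq M$ for some finite $t_0$. I would argue by contradiction: assume $T=\infty$. If $v$ were uniformly bounded on $\Omega\times(0,\infty)$, the monotonicity of $v$ in $t$ combined with standard parabolic Schauder estimates (applicable since then both $v$ and $\rho_{\l,\d}(v)$ are bounded along the trajectory) would give $v(\cdot,t)\to v_\infty$ in $C^2(\overline\Omega)$, with $v_\infty$ a classical stationary solution of \eqref{V}; pushing back via $\zeta_{\l,\d}^{-1}$ produces a classical solution of \eqref{SF}, contradicting the nonexistence statement of \cite{WY} for $\l>\l_\d^*$. Hence $\sup_t\|v(\cdot,t)\|_\infty=\infty$, and one aims to conclude from this together with monotonicity that $E(t)\nearrow\infty$, so that the threshold $M$ is reached in finite time, closing the contradiction.

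The main obstacle is the last implication: a priori the pointwise monotone limit $v_\infty(x)\in[0,\infty]$ could be finite almost everywhere while $\|v(\cdot,t)\|_\infty\to\infty$ via an isolated diverging singularity, in which case $\int_\Omega v_\infty\phi_1\,dx$ might still be finite and $v_\infty$ would define a \emph{singular} weak stationary solution of \eqref{V}. Ruling this out is precisely the delicate point; following the BCMR strategy, one combines Jensen's inequality against a suitable family of test functions with the explicit form $\rho_{\l,\d}\sim e^{\l\d/(1-u)}/(1-u)^2$ either to exclude singular stationary solutions above $\l_\d^*$ directly from the analysis in \cite{WY}, or to sandwich the dynamics of $v$ from below by a sub-solution that itself blows up in finite time. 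All other ingredients---monotonicity, Jensen-plus-eigenfunction, and the scalar ODE blow-up---are routine.
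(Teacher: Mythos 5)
Your proposal has a genuine gap, and it is exactly the one you flag yourself in the last paragraph: the Kaplan--Jensen machinery only produces finite-time blow-up of $E(t)=\int_\O v\,\phi_1\,dx$ \emph{after} $E$ has crossed the superlinearity threshold $M$, and nothing in your argument forces $E$ to get there. Monotonicity of $v$ in $t$ plus $\sup_t\|v(\cdot,t)\|_\infty=\infty$ is compatible with $v(x,t)\uparrow v_\infty(x)$ where $v_\infty$ is finite a.e.\ and merely has an isolated singularity, in which case $E(t)$ stays bounded and the ODE argument never fires. This is not a technicality one can wave at: for convex superlinear nonlinearities this is precisely the scenario of a global solution converging to a \emph{singular} weak stationary solution, and ruling it out is the entire content of the theorem. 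Your two suggested escape routes do not close it as stated: the nonexistence result of \cite{WY} is for classical solutions of \eqref{SF} and does not by itself exclude singular weak stationary solutions of \eqref{V}, and no blowing-up subsolution is exhibited.

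The paper closes this gap by a different mechanism, which is worth internalizing because it is the heart of the BCMR strategy you cite. Working at the level of \eqref{F} itself, one defines $\Phi_\e=\tilde h^{-1}\circ h$ built from the nonlinearities at parameters $\l$ and $\l-\e$; explicitly $\Phi_\e(u)=1-\bigl[\tfrac\e\l+\tfrac{\l-\e}\l(1-u)^3\bigr]^{1/3}\le C_\e<1$. Since $\Phi_\e$ is increasing and concave with $0<\Phi_\e'<1$ (the latter being what controls the fringing term, via $|\nabla\Phi_\e(u)|\le|\nabla u|$), the function $w_\e=\Phi_\e(u)$ is a supersolution of $(F_{\l-\e,\d})$ that is \emph{uniformly} bounded away from $1$. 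Hence a hypothetical global solution at voltage $\l$ yields a global, uniformly bounded solution at voltage $\l-\e$, which (by Theorem~1 of \cite{BCMR} applied to the transformed problem, plus elliptic bootstrap) produces a classical stationary solution of $(SF_{\l-\e,\d})$ with $\l-\e>\l_\d^*$ --- contradicting \cite{WY}. The $\e$-shift is the device that converts ``global but possibly unbounded at level $\l$'' into ``global and bounded at level $\l-\e$,'' which is exactly the implication your argument is missing. Your Jensen-plus-eigenfunction ODE step is correct as far as it goes, but without an analogue of this truncation (or some other proof that $E(t)\to\infty$), the proof does not go through.
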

\begin{proof}
    By contradiction, let $\l>\l_\d^*$ and suppose there exists a solution $u(x,t)$ of \eqref{F} in $\O\times(0,\infty)$.

Claim: given any $\e\in(0,\l-\l_\d^*)$, $(F_{\l-\e,\d})$ has a global solution $u_\e$, which is uniformly bounded in $\O\times(0,\infty)$ by some constant $C_\e<1$.

We follow the similar argument  as in \cite{BCMR} or \cite{GG}. Let
\begin{align*}
    g(u)&=\frac1{(1-u)^2}, \quad h(u)=\int_0^u\frac{ds}{g(s)},\quad0\leq u\leq1;\\
    \tilde{g}(u)&=\frac{\l-\e}{\l(1-u)^2},\quad \tilde{h}(u)=\int_0^u\frac{ds}{\tilde{g}(s)},\quad0\leq u\leq1;
\end{align*}
and
\begin{align*}
    \Phi_\e(u)=\tilde{h}^{-1}\circ h(u).
\end{align*}
Direct computations yield that
\begin{align*}
    \Phi_\e(u)=1-\left[\frac\e\l+\frac{\l-\e}{\l}(1-u)^3\right]^{\frac13}\leq C_\e<1,
\end{align*}
for $0\leq u\leq1$, where $C_\e=1-\left(\frac\e\l\right)^{\frac13}$. Moreover, it is easy to check that $\Phi_\e(0)=0$, $0\leq\Phi_\e(s)<s$, for $s\geq0$, and $\Phi_\e(s)$ is increasing and concave with
\begin{align*}
    \Phi_\e'(s)=\frac{\tilde{g}\circ\Phi_\e(s)}{g(s)}>0.
\end{align*}
Setting $w_\e=\Phi_\e(u)$, we have
\begin{align*}
    -\triangle w_\e&=-\Phi_\e''(u)|\nabla u|^2-\Phi_\e'(u)\triangle u
\geq \Phi_\e'(u)\left[\l\frac{(1+\d|\nabla u|^2)}{(1-u)^2}-u_t\right]\\
&=\l(1+\d|\nabla u|^2)\frac{\Phi_\e'(u)}{(1-u)^2}-(w_\e)_t=\l(1+\d|\nabla u|^2)\tilde{g}(w_\e)-(w_\e)_t\\
    &=\frac{(\l-\e)(1+\d|\nabla u|^2)}{(1-w_\e)^2}-(w_\e)_t.
\end{align*}
Notice that
\begin{align*}
    \Phi_\e'(u)=\frac{\frac{\l-\e}\l(1-u)^2}{\left[\frac\e\l+\frac{\l-\e}\l(1-u)^3\right]^{\frac23}}
    \leq\frac{\frac{\l-\e}\l(1-u)^2}{\left[\frac{\l-\e}\l(1-u)^3\right]^{\frac23}}
    =\left(\frac{\l-\e}\l\right)^{\frac13}<1.
\end{align*}
Hence,
\begin{align*}
    |\nabla w_\e|^2=(\Phi_\e'(u))^2|\nabla u|^2\leq|\nabla u|^2.
\end{align*}
Furthermore,
\begin{align*}
    -\triangle w_\e\geq\frac{(\l-\e)(1+\d|\nabla w_\e|^2)}{(1-w_\e)^2}-(w_\e)_t.
\end{align*}
This means that $w_\e=\Phi_\e(u)\leq C_\e$ is the supersolution to $(F_{\l-\e,\d})$. Since zero is a subsolution of $(F_{\l-\e,\d})$, we deduce that there exists a unique global solution $u_\e$ for $(F_{\l-\e,\d})$ satisfies $0\leq u_\e\leq w_\e\leq C_\e<1$ uniformly in $\O\times(0,\infty)$.

Let $v_\epsilon=\zeta_{\l-\epsilon,\d}(u_\epsilon)$. It is clear to see that $v_\epsilon$ is a global classical solution to $(V_{\l-\epsilon,\d})$. And it has been checked previously that $\rho_{\l-\epsilon,\d}$ is a nondecreasing, convex function, and there exists some $v_0>0$ such that $\rho_{\l-\epsilon,\d}(v_0)>0$ and
\begin{align*}
    \int_{v_0}^\infty\frac{ds}{\rho_{\bar{\l},\d}(s)}<\infty.
\end{align*}
Therefore, from Theorem 1, \cite{BCMR}, we obtain a weak solution to the stationary equation of ($V_{\l-\epsilon,\d}$), where $0<\e<\l-\l_\d^*$. In fact, using Sobolev embedding theorem and a boot-strap arguement, any weak solution to the stationary equation of ($V_{\l-\epsilon,\d}$) satisfying $\rho_{\l-\epsilon,\d}(v)\in L^1(\O)$ is indeed smooth. This contradicts with the nonexistence result in \cite{WY}.
\end{proof}

\section{Estimates for the pull-in voltage and the finite quenching time}

\setcounter{equation}{0}

A lower bound of $\l_\d^*$ is given in Theorem 2.2, \cite{Wq}, i.e.,
\begin{equation}\label{eqn-lower bound for ld}
	\l_l:=\frac4{27}\frac{||\xi||_\infty}{||\xi||_\infty^2+\d||\triangle\xi||_\infty^2}\leq\l_\d^*,
\end{equation}
where $\xi$ is the solution to $-\triangle\xi=1$, $x\in\O$ with the Dirichlet boundary condition. And it is not difficult to see that $\l^*$, the pull-in voltage for \eqref{eqn-without fringing}, is an upper bound for $\l_\d^*$, due to the comparison principle. From \cite{GPW}, $\l^*\leq\frac4{27}\mu_0$, where $\mu_0>0$ is the first eigenvalue of $-\triangle\phi_0=\mu_0\phi_0$, $x\in\O$ with Dirichlet boundary condition. We shall derive an upper bound for $\l_\d^*$ to show explicit dependence of $\d$, if $\d\ll1$:
\begin{proposition}[Upper bound for $\l_\d^*$, $\d\ll1$]\label{prop-lower bound for ld}
	The pull-in voltage $\l_\d^*<\infty$ of \eqref{F} has the upper bound
\[
	\l_\d^*\leq\l_{u,1}:=\frac4{27}\mu_0\left(1-\frac1{27||\xi||_\infty}\d+\mathcal{O}(\d^2)\right),
\]
where $\xi$ is the solution to $-\triangle\xi=1$, $x\in\O$ with Dirichlet boundary condition, if $\d\ll1$.
\end{proposition}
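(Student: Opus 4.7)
My plan is to adapt the classical first-eigenfunction argument that gives $\l^*\le\frac{4}{27}\mu_0$ when $\d=0$, and to isolate the leading $\mathcal{O}(\d)$ correction from the fringing term before recasting it in terms of the torsion function $\xi$. The transformation $v=\zeta_{\l,\d}(u)$ introduced in Section 2 reduces the stationary equation \eqref{SF} to the standard semilinear form $-\triangle v=\l\rho_{\l,\d}(v)$ with $\rho_{\l,\d}$ convex and increasing, which makes Jensen's inequality directly applicable.

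First, let $\phi_0>0$ be the principal Dirichlet eigenfunction on $\O$ normalized so that $\int_\O\phi_0\,dx=1$, with $-\triangle\phi_0=\mu_0\phi_0$. Assuming a stationary solution $v$ of the transformed equation exists, multiply by $\phi_0$ and integrate by parts; using convexity of $\rho_{\l,\d}$ and Jensen's inequality, one obtains $\mu_0V\ge\l\rho_{\l,\d}(V)$ where $V=\int_\O v\phi_0\,dx$. Setting $U=\zeta_{\l,\d}^{-1}(V)\in[0,1)$ and using the explicit formulas for $\zeta_{\l,\d}$ and $\rho_{\l,\d}$, this becomes
\[
\l\le\mu_0(1-U)^2\,e^{-\l\d/(1-U)}\int_0^U e^{\l\d/(1-s)}\,ds.
\]
Expanding the right-hand side in $\d$ via $e^{\l\d/(1-s)}=1+\l\d/(1-s)+\mathcal{O}(\d^2)$, the leading term $\mu_0U(1-U)^2$ attains its maximum $\tfrac{4}{27}\mu_0$ at $U^*=\tfrac13$, recovering the non-fringing bound, while the first-order correction at $U^*$ is strictly negative.

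To convert this abstract correction into the stated geometric form $\frac{1}{27\|\xi\|_\infty}$, I would bring in the torsion function $\xi$ through the pointwise comparison $u\ge\l\xi$, which follows from applying the maximum principle to $-\triangle(u-\l\xi)\ge 0$ (a consequence of $(1+\d|\nabla u|^2)/(1-u)^2\ge 1$). Equivalently, the Green's-function representation yields the pointwise upper bound $u(x)\le\l\xi(x)\cdot\bigl\|(1+\d|\nabla u|^2)/(1-u)^2\bigr\|_\infty$, so that the peak $M=\|u\|_\infty$ satisfies $M(1-M)^2\le\l\|\xi\|_\infty(1+\d\|\nabla u\|_\infty^2)$. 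Combining this scalar inequality at the critical value $M=\tfrac13$ with the Jensen expansion above consolidates the $\mathcal{O}(\d)$ constants into the factor $\tfrac{1}{27\|\xi\|_\infty}$. The main obstacle will be the gradient-dependent integrand $\int_\O\phi_0|\nabla u|^2/(1-u)^2\,dx$, which I would rewrite as $\int_\O\phi_0|\nabla h(u)|^2\,dx$ with $h(u)=-\log(1-u)$ and then bound from below by integrating by parts against $-\triangle\xi=1$, so that the resulting constant is controlled by $\|\xi\|_\infty^{-1}$ rather than by $\mu_0$ alone; the bookkeeping of transcendental constants arising from $\int_0^{1/3}e^{\l\d/(1-s)}\,ds$ and $e^{-3\l\d/2}$ has to be done at $U^*=\tfrac13$ so that they collapse cleanly into the factor $\tfrac14\cdot\tfrac{4}{27}=\tfrac{1}{27}$.
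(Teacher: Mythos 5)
Your first step coincides with the paper's: pair the transformed stationary equation $-\triangle v=\l\rho_{\l,\d}(v)$ with the principal eigenfunction $\phi_0$ and reduce nonexistence to the scalar inequality $\l\leq\mu_0(1-U)^2e^{-\l\d/(1-U)}\int_0^Ue^{\l\d/(1-s)}ds$ (the paper even avoids Jensen by arguing pointwise, via $\int_0^ue^{\l\d/(1-s)}ds\leq ue^{\l\d/(1-u)}$, but that difference is cosmetic). The genuine gap is in how the constant $\frac1{27||\xi||_\infty}$ is supposed to appear. Your last paragraph proposes extracting it from gradient quantities such as $\int_\O\phi_0|\nabla u|^2/(1-u)^2\,dx$ via $u\geq\l\xi$ and a Green's-function bound — but the inequality you have already derived contains \emph{no} gradient term: the fringing term was entirely absorbed by the substitution $v=\zeta_{\l,\d}(u)$, so there is nothing of that form left to estimate, and that whole programme is disconnected from your setup. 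Moreover, a direct expansion of your own right-hand side in $\d$ gives a first-order correction $\mu_0(1-U)^2\l\d\bigl[\ln\frac1{1-U}-\frac U{1-U}\bigr]$, which at $U=\frac13$ yields a bound of the form $\frac4{27}\mu_0\bigl(1-\frac49(\frac12-\ln\frac32)\mu_0\d+\mathcal{O}(\d^2)\bigr)$ — a $\mu_0$-dependent, transcendental constant, not $\frac1{27||\xi||_\infty}$. (One could still salvage the stated form from this via $\mu_0||\xi||_\infty\geq1$ together with the numerical fact $\frac49(\frac12-\ln\frac32)>\frac1{27}$, but you would have to identify and justify that step, and also address that your Taylor expansion is not uniform in $U$ near $U=1$.)

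The paper's actual mechanism is different and worth knowing: since the exponent $\frac1{1-s}-\frac1{1-u}$ is nonpositive for $s\leq u$, one may replace $\l$ inside the exponential by the \emph{lower} bound $\l_l=\frac4{27}\frac{||\xi||_\infty}{||\xi||_\infty^2+\d||\triangle\xi||_\infty^2}\leq\l_\d^*$ already established in \cite{Wq}, which only increases the integrand for the relevant $\l\geq\l_l$. This decouples the implicit dependence on $\l$. A trapezoidal bound (convexity of the integrand) then gives $\int_0^ue^{\l_l\d(\frac1{1-s}-\frac1{1-u})}ds\leq\frac u2[1+e^{\l_l\d(1-\frac1{1-u})}]$, and maximizing $\frac12\mu_0u(1-u)^2[1+e^{\l_l\d(1-\frac1{1-u})}]$ at $u=\frac13$ produces the correction $-\frac{\mu_0\l_l\d}{27}=-\frac{4\mu_0\d}{27^2||\xi||_\infty}+\mathcal{O}(\d^2)$, i.e.\ exactly the stated $\l_{u,1}$. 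It is this substitution of $\l_l$ — not any pointwise comparison of $u$ with $\l\xi$ — that injects $||\xi||_\infty$ into the answer; as written, your proposal does not reach the stated bound.
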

\begin{proof}
	This argument is used in many estimates of the pull-in voltage (cf. Theorem 3.1, \cite{Pe} or Theorem 2.1, \cite{GPW}). Let $\mu_0>0$ and $\phi_0>0$ be the first eigen pair $-\triangle\phi_0=\mu_0\phi_0$ in $\O$ with Dirichlet boundary condition. We multiply the stationary equation of \eqref{V} by $\phi_0$, integrate the resulting equation over $\O$, and use Green's identity to get
\[
	\int_\O [-\mu_0 v+\l\rho_{\l,\d}(v)]\phi_0dx=\bigintss_\O\left(-\mu_0\int_0^u e^{\frac{\l\d}{1-s}}ds+\l\frac{e^{\frac{\l\d}{1-u}}}{(1-u)^2}\right)dx=0.
\] 
Noting that $\int_0^ue^{\frac{\l\d}{1-s}}ds\leq e^{\frac{\l\d}{1-u}}u$, we get that if $\l>\frac4{27}\mu_0$, then 
\begin{equation}\label{eqn-estimate ld}
	-\mu_0\int_0^u e^{\frac{\l\d}{1-s}}ds+\l\frac{e^{\frac{\l\d}{1-u}}}{(1-u)^2}\geq-\mu_0e^{\frac{\l\d}{1-u}u}+\l\frac{e^{\frac{\l\d}{1-u}}}{(1-u)^2}>0,
\end{equation}
for any $u\in[0,1]$. Therefore, there is no solution to the stationary equation of \eqref{V}, so does \eqref{SF}, if $\l>\frac4{27}\mu_0$. That is, $\l_\d^*\leq\frac4{27}\mu_0$. This is the upper bound obtained in \cite{GPW}. In this way, we ignore the effect $\d$ completely. Let us go back to \eqref{eqn-estimate ld} and we see that if 
\[
	\l\geq\max_{u\in[0,1]}\left[\mu_0(1-u)^2\int_0^ue^{\l_l\d\left(\frac1{1-s}-\frac1{1-u}\right)}ds\right],
\]
then \eqref{eqn-estimate ld} holds, where $\l_l$ is in \eqref{eqn-lower bound for ld}. Let us estimate the maximum in the following:
\[
	\int_0^ue^{\l_l\d\left(\frac1{1-s}-\frac1{1-u}\right)}ds\leq\frac12u\left[1+e^{\l_l\d\left(1-\frac1{1-u}\right)}\right],
\]
due to the convexity of the integrand $e^{\l_l\d\left(\frac1{1-s}-\frac1{1-u}\right)}$. Therefore, if 
\[
	\l\geq\max_{u\in[0,1]}\frac12\mu_0(1-u)^2u\left[1+e^{\l_l\d\left(1-\frac1{1-u}\right)}\right]=\frac4{27}\mu_0-\frac{4\mu_0}{27^2||\xi||_\infty}\d+\mathcal{O}(\d^2),
\]
then \eqref{eqn-estimate ld} holds, where $\xi$ is the solution to $-\triangle\xi=1$ for $x\in\O$ with Dirichlet boundary condition, provided $\d\ll1$. 
\end{proof}

Next, we show the behavior of $\l_\d^*$ as $\d\rightarrow\infty$.
\begin{proposition}[$\l_\d^*$ for $\d\gg1$]\label{prop-delta large}
	The pull-in voltage $\l_\d^*$ of \eqref{F} tends to $0$, as $\d\rightarrow\infty$. That is,
\[
	\lim_{\d\rightarrow\infty}\l_\d^*=0.
\]
\end{proposition}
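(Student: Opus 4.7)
The plan is to sharpen the eigenfunction test of \prop{prop-lower bound for ld} by retaining the $\d$-dependent contributions that were discarded there. Assume for contradiction that a solution $u$ to \eqref{SF} exists for some $\l>0$ (to be constrained below). Passing to $v=\zeta_{\l,\d}(u)$, which solves the stationary form of \eqref{V}, and testing against the positive first Dirichlet eigenfunction $\phi_0$ of $-\triangle$ on $\O$ with eigenvalue $\mu_0>0$, Green's identity delivers the balance
\[
\int_\O \phi_0(x)\,F_\l(u(x))\,dx=0,\qquad F_\l(u):=\l\,\frac{e^{\l\d/(1-u)}}{(1-u)^2}-\mu_0\int_0^u e^{\l\d/(1-s)}\,ds.
\]

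The key step is \emph{not} to bound $\int_0^u e^{\l\d/(1-s)}\,ds$ by $u\,e^{\l\d/(1-u)}$, the move that erases $\d$ in \prop{prop-lower bound for ld} and leaves only $\tfrac{4}{27}\mu_0$. Instead I would differentiate directly to obtain
\[
F_\l'(u)=e^{\l\d/(1-u)}\Bigl[\frac{\l^2\d}{(1-u)^4}+\frac{2\l}{(1-u)^3}-\mu_0\Bigr].
\]
Since $(1-u)^{-3},(1-u)^{-4}\geq 1$ on $[0,1)$, the bracket is bounded below by $\l^2\d+2\l-\mu_0$; so whenever $\l^2\d+2\l>\mu_0$, the function $F_\l$ is strictly increasing on $[0,1)$, and therefore $F_\l(u)\geq F_\l(0)=\l e^{\l\d}>0$ everywhere. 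With $\phi_0>0$ in $\O$, this forces the displayed integral to be strictly positive, contradicting the identity.

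Consequently any such $\l$ lies beyond the pull-in voltage, giving the explicit upper bound $\l_\d^*\leq(-1+\sqrt{1+\mu_0\d})/\d$, i.e.\ the positive root of $\l^2\d+2\l=\mu_0$. As $\d\to\infty$ this behaves like $\sqrt{\mu_0/\d}\to 0$, which establishes the proposition (and in fact gives a quantitative rate). I do not anticipate a serious technical obstacle: the integration by parts is routine because solutions of \eqref{SF} for $\l\leq\l_\d^*$ are strictly less than $1$ on $\bar\O$ by \cite{WY}, so $v$ and $\rho_{\l,\d}(v)$ are bounded and $\phi_0$ is smooth. What does the real work is the quadratic-in-$\l$ term $\l^2\d$ contributed by the fringing field to $F_\l'$ --- precisely the term that the previous crude bound erases.
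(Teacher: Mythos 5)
Your argument is correct, and it takes a genuinely different route from the paper's. The paper also starts from the first-eigenfunction identity, but it stays in the $u$-variable for \eqref{SF} and treats the fringing contribution $\l\d\int_\O\frac{|\nabla u|^2}{(1-u)^2}\phi_0\,dx$ by repeated integration by parts, producing a factorial series $\sum_{p}\frac{(\l\d)^{p-1}}{(p-1)!}[\cdots]$ together with sign-definite boundary terms, and then argues that for $\d\gg1$ the highest-order retained term dominates, forcing $\l\leq\mu_0/(P-2)$ with $P=P(\d)\to\infty$; no explicit rate is extracted and the dominance step is left informal. You instead pass to $v=\zeta_{\l,\d}(u)$, where the gradient term has already been absorbed into $\rho_{\l,\d}$, so the whole problem collapses to the pointwise positivity of the single function $F_\l$ on $[0,1)$. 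Your computation of $F_\l'$ checks out, the elementary bound $(1-u)^{-3},(1-u)^{-4}\geq 1$ is all that is needed, and $F_\l(0)=\l e^{\l\d}>0$ closes the contradiction, since $\phi_0>0$ in $\O$ and any admissible solution of \eqref{SF} satisfies $0\leq u<1$, so $v$ is classical and the Green identity is legitimate (the paper itself uses the identical identity in the proof of its small-$\d$ upper bound). What your route buys is a short, fully rigorous proof with the quantitative bound $\l_\d^*\leq(-1+\sqrt{1+\mu_0\d})/\d\sim\sqrt{\mu_0/\d}$; this rate is even consistent with the paper's numerics (for the slab with $\d=7000$, $\sqrt{\pi^2/7000}\approx 0.038$ against the computed $\l_\d^*=0.036$), so your bound appears close to sharp. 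What the paper's expansion buys, in exchange for its length, is an explicit display of how each power of $\l\d$ enters the identity, but for the stated limit your argument is both simpler and stronger.
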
 
\begin{proof}
	As shown in Theorem \ref{thm-global existence} and Theorem \ref{thm-finite time quenching}, the pull-in voltage $\l_\d^*$ of \eqref{F} is the same one as that of \eqref{SF}. Let us multiply \eqref{SF} by $\phi_0>0$, the first eigenfunction of $-\triangle\phi_0=\mu_0\phi_0$ in $\O$ with Dirichlet boundary condition, integrate over $\O$, and use Green's identity to get
\begin{equation}\label{eqn-lambda large}
	0=\int_\O\left(-\mu_0u+\frac\l{(1-u)^2}\right)\phi_0dx+\l\d\int_\O\frac{|\nabla u|^2}{(1-u)^2}\phi_0dx.
\end{equation}
By integration by parts, the third term in the above equation gets
\begin{align}\label{eqn-lambda large.3}\notag
	\int_\O\frac{|\nabla u|^2}{(1-u)^2}\phi_0dx=&\int_\O\nabla u\cdot\frac{\nabla u}{(1-u)^2}\phi_0dx
	=\int_\O\nabla u\cdot\nabla\left(\frac1{1-u}\right)\phi_0dx\\\notag
		=&-\int_\O\triangle u\frac1{1-u}\phi_0dx-\int_\O\nabla u\cdot\nabla\phi_0\frac1{1-u}dx\\\notag
		=&\l\int_\O\frac{1+\d|\nabla u|^2}{(1-u)^3}\phi_0dx+\int_\O\nabla(\ln{(1-u)})\cdot\nabla\phi_0dx\\
		=&\l\int_\O\frac{1+\d|\nabla u|^2}{(1-u)^3}\phi_0dx+\mu_0\int_\O\ln{(1-u)}\phi_0dx.
\end{align}
Furthermore, for $p\geq3$, we have
\begin{align}\label{eqn-lambda large.4}\notag
	\int_\O\frac{|\nabla u|^2}{(1-u)^p}\phi_0dx=&\frac1{p-1}\int_\O\nabla u\nabla\left(\frac1{(1-u)^{p-1}}\right)\phi_0dx\\\notag
	=&-\frac1{p-1}\int_\O\triangle u\frac1{(1-u)^{p-1}}\phi_0dx-\frac1{p-1}\int_\O\nabla u\nabla\phi_0\frac1{(1-u)^{p-1}}dx\\\notag
	=&\frac\l{p-1}\int_\O\frac{1+\d|\nabla u|^2}{(1-u)^{p+1}}\phi_0dx\\\notag
	&-\frac1{(p-1)(p-2)}\int_\O\nabla\phi_0\nabla\left(\frac1{(1-u)^{p-2}}\right)dx\\\notag
	=&\frac\l{p-1}\int_\O\frac{\phi_0}{(1-u)^{p+1}}dx+\frac{\l\d}{p-1}\int_\O\frac{|\nabla u|^2}{(1-u)^{p+1}}\phi_0dx\\\notag
	&-\frac{\mu_0}{(p-1)(p-2)}\int_\O\frac{\phi_0}{(1-u)^{p-2}}dx\\
	&-\frac1{(p-1)(p-2)}\int_{\partial\O}\frac{\partial\phi_0}{\partial\nu}\frac1{(1-u)^{p-2}}dS,
\end{align}
where $\nu$ is the outward unit normal vector of $\partial\O$. Substitute \eqref{eqn-lambda large.3} and \eqref{eqn-lambda large.4} to \eqref{eqn-lambda large}, we get
\begin{align}\label{eqn-lambda large.=}\notag
	0=&\bigintss_\O\left\{-\mu_0u+\frac\l{(1-u)^2}+\l\d\left[\mu_0\ln{(1-u)}+\frac\l{(1-u)^3}\right]\right\}\phi_0dx\\\notag
	&\phantom{\bigintss_\O}+\sum_{p=3}^P\frac{\l^{p-1}\d^{p-1}}{(p-1)!}\left[\l\int_\O\frac1{(1-u)^{p+1}}\phi_0dx-\frac{\mu_0}{p-2}\int_\O\frac1{(1-u)^{p-2}}\phi_0dx\right.\\\notag
	&\phantom{\bigintss_\O+\sum_{p=3}^P\frac{\l^{p-1}\d^{p-1}}{p-1}}\left.-\frac1{p-2}\int_{\partial\O}\frac{\partial\phi_0}{\partial\nu}\frac1{(1-u)^{p-2}}dS\right]\\
	&\phantom{\bigintss_\O}+\frac{\l^P\d^P}{(P-1)!}\int_\O\frac{|\nabla u|^2}{(1-u)^{P+1}}\phi_0dx,
\end{align}
for arbitrary $P>p$. By the boundary point lemma, we have $\frac{\partial\phi_0}{\partial\nu}<0$ on $\partial\O$. Hence, the term $\displaystyle{-\int_{\partial\O}\frac{\partial\phi_0}{\partial\nu}\frac1{(1-u)^{p-2}}dS}$ is positive, so does the term $\displaystyle{\int_\O\frac{|\nabla u|^2}{(1-u)^{P+1}}dx}$. If $\d\gg1$, then $\mathcal{O}(\d^{P-1})$ is the leading order term, except the last term in \eqref{eqn-lambda large.=}. The equality \eqref{eqn-lambda large.=} can't hold when
\[
	\frac\l{(1-u)^{P+1}}-\frac{\mu_0}{(P-2)(1-u)^{P-2}}>0
\] 
holds for all $u\in[0,1]$. That is,
\begin{align}\label{eqn-lambda large.0}
	\l>\frac{\mu_0}{P-2}\max_{u\in[0,1]}(1-u)^3=\frac{\mu_0}{P-2},
\end{align}
where $P=P(\d)\rightarrow\infty$, if $\d\rightarrow\infty$. Our result follows immediately.
\end{proof}
\begin{proposition}[Upper bound of $T$]\label{prop-upper bound large lambda}
	Let $\phi$ be any nonnegative $C^2$ function such that $\phi\not\equiv0$ and $\phi=0$ on $\partial\O$. Then for $\l$ large enough, the quenching time $T$ for the solution to \eqref{F} satisfies
	\begin{equation}\label{eqn-upper bound of T}
		T\leq\frac{||\phi||_1}{3\l||\phi||_1-||\triangle\phi||_1},
	\end{equation}
where $||\cdot||_1$ is the $L^1$ norm of $\cdot$ in $\O$.
\end{proposition}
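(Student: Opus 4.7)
The plan is to test the PDE against $\phi$, apply Jensen's inequality to collapse the nonlinearity into a scalar function of a weighted average of $u$, and then integrate the resulting ordinary differential inequality in time.

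Set $F(t):=\int_\O u(x,t)\phi(x)\,dx$, so that $F(0)=0$ and $F(t)\in[0,\|\phi\|_1)$ on the existence interval $[0,T)$. Multiplying the equation $u_t-\triangle u=\l(1+\d|\nabla u|^2)/(1-u)^2$ by $\phi$, integrating over $\O$, and integrating by parts twice (the boundary terms vanish because $u=\phi=0$ on $\partial\O$) gives
\[
F'(t)=\int_\O u\,\triangle\phi\,dx+\l\int_\O\frac{(1+\d|\nabla u|^2)\phi}{(1-u)^2}\,dx.
\]
Dropping the nonnegative fringing piece and bounding $\int_\O u\,\triangle\phi\,dx\ge-\|\triangle\phi\|_1$ (valid since $0\le u<1$ on $[0,T)$), I apply Jensen's inequality to the convex function $s\mapsto(1-s)^{-2}$ on $[0,1)$ against the probability measure $\phi\,dx/\|\phi\|_1$ to obtain
\[
\int_\O\frac{\phi}{(1-u)^2}\,dx\ge\|\phi\|_1\bigl(1-F(t)/\|\phi\|_1\bigr)^{-2}=\frac{\|\phi\|_1^3}{(\|\phi\|_1-F(t))^2}.
\]
Combining these two steps yields the scalar differential inequality
\[
F'(t)\ge-\|\triangle\phi\|_1+\frac{\l\|\phi\|_1^3}{(\|\phi\|_1-F(t))^2}.
\]

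To close the estimate, multiply through by $(\|\phi\|_1-F)^2$, use $(\|\phi\|_1-F)^2\le\|\phi\|_1^2$ to dominate the Laplacian term, and recognise the left side as $-\tfrac13\tfrac{d}{dt}(\|\phi\|_1-F)^3$, obtaining
\[
-\frac13\frac{d}{dt}(\|\phi\|_1-F)^3\ge\|\phi\|_1^2\bigl(\l\|\phi\|_1-\|\triangle\phi\|_1\bigr).
\]
Provided $\l$ is large enough that the bracketed factor is positive, integrating from $0$ to $T$ together with $F(0)=0$ and $(\|\phi\|_1-F(T))^3\ge0$ produces $\tfrac13\|\phi\|_1^3\ge\|\phi\|_1^2(\l\|\phi\|_1-\|\triangle\phi\|_1)T$, which rearranges to the claimed upper bound on $T$ (up to the numerical constant in front of $\|\triangle\phi\|_1$ in the denominator, which depends on how wastefully one estimates $(\|\phi\|_1-F)^2$ in the second step).

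The only real obstacle is the absence of a sign for $\int_\O u\,\triangle\phi\,dx$: since $\phi$ is an arbitrary nonnegative $C^2$ test function vanishing on $\partial\O$, its Laplacian generically changes sign on $\O$, and no better lower bound than $-\|\triangle\phi\|_1$ is available without additional hypotheses on $\phi$ (a sharpening is possible, for instance, when $\phi$ is taken to be a Dirichlet eigenfunction, in which case $\int_\O u\,\triangle\phi=-\mu_0 F$ improves the denominator). It is precisely this crude $L^1$-bound, together with the domination $(\|\phi\|_1-F)^2\le\|\phi\|_1^2$, that both forces the "$\l$ large enough" threshold and pins down the constants. Apart from this, the argument is routine convexity followed by a one-line separation-of-variables integration, in the spirit of the test-function approach in \cite{YZ}.
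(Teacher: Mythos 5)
Your argument is essentially correct and genuinely different from the paper's. The paper does not test the equation with $\phi$ and then invoke Jensen; it tests with $\phi(1-u)^2$, so that $u_t(1-u)^2=\partial_t\tfrac13\bigl[1-(1-u)^3\bigr]$ and the factor $(1-u)^{-2}$ in the nonlinearity cancels exactly, leaving $\l\int_\O(1+\d|\nabla u|^2)\phi\,dx\ge\l\|\phi\|_1$ with no convexity argument needed. Two integrations by parts then give $\partial_t\int_\O\tfrac13[1-(1-u)^3]\phi\,dx\ge\tfrac13\int_\O[1-(1-u)^3]\triangle\phi\,dx+\l\|\phi\|_1$, and since $0\le 1-(1-u)^3\le 1$ one integrates in time and reads off $\tfrac13\|\phi\|_1\ge\l T\|\phi\|_1-\tfrac13T\|\triangle\phi\|_1$, i.e.\ exactly \eqref{eqn-upper bound of T}. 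Your Jensen route buys the same qualitative conclusion $T\lesssim 1/\l$ by an equally elementary path, but, as you yourself flag, it does not reach the stated constant: after multiplying by $(\|\phi\|_1-F)^2$ and crudely dominating that factor by $\|\phi\|_1^2$, you end with $T\le\|\phi\|_1/(3\l\|\phi\|_1-3\|\triangle\phi\|_1)$, i.e.\ $3\|\triangle\phi\|_1$ rather than $\|\triangle\phi\|_1$ in the denominator, which is strictly weaker than \eqref{eqn-upper bound of T} for every $\l$ in the admissible range. The reason the paper's constant is better is structural: in its bookkeeping the tracked quantity and the Laplacian error both carry the prefactor $\tfrac13$ coming from $1-(1-u)^3$, whereas in yours the error term $\int_\O u\,\triangle\phi\,dx\ge-\|\triangle\phi\|_1$ enters at full strength while the gain term is only harvested through the cube. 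So: a valid proof of a slightly weaker inequality of the same form, not of the literal statement; to get the stated constant you would need to switch to (or rediscover) the weighted test function $\phi(1-u)^2$.
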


\begin{proof}
	Using $\varphi(1-u)^2$ as the test function to \eqref{F} and integrating over $\O$,
\begin{align*}
	\left(\int_\O\frac13\left[1-(1-u)^3\right]\varphi dx\right)_t
	=&\int_\O\triangle u\varphi(1-u)^2dx+\l\int_\O(1+\d|\nabla u|^2)\varphi dx\\
	\geq&-\int_\O\nabla u\nabla\varphi(1-u)^2dx+2\int_\O|\nabla u|^2\varphi(1-u)\\
	&+\l\int_\O\varphi dx\\
	\geq&\frac13\int_\O\left[1-(1-u)^3\right]\triangle\varphi dx+\l\int_\O\varphi dx.
\end{align*}
Hence, for any $t<T$, integrating from $0$ to $t$, we obtain that
\begin{align*}
	\frac13\int_\O\varphi dx\geq&\frac13\int_\O\left[1-(1-u(x,t))^3\right]\varphi dx\\
\geq&\frac13\int_0^t\int_\O\left[1-(1-u)^3\right]\triangle\varphi dx+\l t\int_\O\varphi dx
\geq\l t\int_\O\varphi dx-\frac13t\int_\O|\triangle\varphi|dx.
\end{align*}
By tending $t$ to $T$, we are done.
\end{proof}

We compare the quenching time $T=T(\l,\d)$ with different $\l$:
\begin{proposition}\label{prop-different lambda}
    Suppose $u_1=u_1(x,t)$ and $u_2=u_2(x,t)$ are solutions of \eqref{F} with $\l=\l_1$ and $\l_2$, respectively. And the corresponding finite quenching times are $T_{\l_1}$ and $T_{\l_2}$, respectively. If $\l_1>\l_2$, then $T_{\l_1}<T_{\l_2}$.
\end{proposition}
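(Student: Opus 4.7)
The plan is a parabolic comparison argument exploiting the strict monotonicity of the right-hand side of \eqref{F} in $\lambda$. First, since $(1+\delta|\nabla u_2|^2)/(1-u_2)^2>0$ on $\Omega\times[0,T_{\lambda_2})$ and $\lambda_1>\lambda_2$, the solution $u_2$ is a strict sub-solution of $(F_{\lambda_1,\delta})$:
\begin{equation*}
(u_2)_t-\triangle u_2
=\lambda_2\frac{1+\delta|\nabla u_2|^2}{(1-u_2)^2}
<\lambda_1\frac{1+\delta|\nabla u_2|^2}{(1-u_2)^2},
\end{equation*}
while $u_1$ solves the corresponding equality with the same vanishing initial and boundary data.

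To handle the quadratic gradient nonlinearity, I would set $w=u_1-u_2$ and linearise along the convex segment $u_\theta=\theta u_1+(1-\theta)u_2$, $\theta\in[0,1]$, producing mean-value coefficients $\vec{b}(x,t)$ and $c(x,t)$ such that
\begin{equation*}
\frac{1+\delta|\nabla u_1|^2}{(1-u_1)^2}-\frac{1+\delta|\nabla u_2|^2}{(1-u_2)^2}=\vec{b}\cdot\nabla w+c\,w.
\end{equation*}
These coefficients are uniformly bounded on every compact subinterval $[0,T^*-\kappa]$ of $[0,T^*)$, where $T^*:=\min(T_{\lambda_1},T_{\lambda_2})$, since the $u_i$ stay bounded away from $1$ there and standard parabolic regularity controls $|\nabla u_i|$. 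Consequently $w$ satisfies
\begin{equation*}
w_t-\triangle w-\lambda_2\vec{b}\cdot\nabla w-\lambda_2 c\,w\;\ge\;(\lambda_1-\lambda_2)\frac{1+\delta|\nabla u_2|^2}{(1-u_2)^2}\;>\;0,
\end{equation*}
with $w\equiv 0$ on the parabolic boundary. The weak parabolic maximum principle forces $w\ge 0$ on $\Omega\times[0,T^*)$, so $\|u_1(\cdot,t)\|_\infty\ge\|u_2(\cdot,t)\|_\infty$; letting $t\to T_{\lambda_2}^-$ yields the weak inequality $T_{\lambda_1}\le T_{\lambda_2}$.

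To upgrade to the strict inequality $T_{\lambda_1}<T_{\lambda_2}$, the strong maximum principle together with Hopf's lemma, applied to the linear inequality above (whose source term is strictly positive thanks to $\lambda_1-\lambda_2>0$), give $w>0$ throughout the interior of $\Omega\times(0,T^*)$ and $\partial_\nu w<0$ on $\partial\Omega$. Fixing any $t_0\in(0,T_{\lambda_1})$, the interior strict positivity, the Hopf boundary bound and the continuity of $\partial_\nu u_2(\cdot,t)$ in $t$ together produce a small $\eta>0$ with
\begin{equation*}
u_1(x,t_0)\;\ge\;u_2(x,t_0+\eta)\qquad\text{on }\bar\Omega.
\end{equation*}
The rest is a second comparison step: on the interval $[t_0,\min(T_{\lambda_1},T_{\lambda_2}-\eta))$, $u_1$ is a super-solution of $(F_{\lambda_2,\delta})$ (because $\lambda_1>\lambda_2$) and at $t=t_0$ dominates the time-translate $v(x,t):=u_2(x,t+\eta)$, which itself solves $(F_{\lambda_2,\delta})$. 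Re-running the same linearised comparison propagates the inequality $u_1(x,t)\ge v(x,t)$ forward up to the quenching time of $v$, which is $T_{\lambda_2}-\eta$; taking $t\to(T_{\lambda_2}-\eta)^-$ and using $\|v(\cdot,t)\|_\infty\to 1$ forces $T_{\lambda_1}\le T_{\lambda_2}-\eta<T_{\lambda_2}$. The delicate point — and the main obstacle — is rigorously extracting the uniform time shift $\eta$, which reduces to combining the Hopf-type boundary behaviour of $w$ with the interior compactness furnished by the strong maximum principle.
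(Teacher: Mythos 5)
Your argument is correct, but it follows a genuinely different route from the paper's. The paper first applies the change of variable $v=\zeta_{\l,\d}(u)$ from \eqref{transformation}, which converts \eqref{F} into the semilinear problem \eqref{V} with no gradient term; it then sets $\hat v=v_1-v_2$, writes $\l_1\rho_{\l_1,\d}(v_1)-\l_2\rho_{\l_2,\d}(v_2)>\l_2\rho_{\l_2,\d}'(\theta)\hat v$, and invokes the maximum principle to get $v_1>v_2$, from which it simply asserts $T_{\l_1}<T_{\l_2}$. You instead linearise the full quasilinear nonlinearity $F(p,z)=(1+\d|p|^2)/(1-z)^2$ along the segment $u_\theta$, which costs you the need for uniform gradient bounds on compact time subintervals (available by standard parabolic regularity, as you note) but avoids the $\l$-dependence of the transformation $\zeta_{\l,\d}$ that the paper's proof quietly glosses over when comparing $v_1=\zeta_{\l_1,\d}(u_1)$ with $v_2=\zeta_{\l_2,\d}(u_2)$. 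More significantly, your second stage --- extracting a uniform time shift $\eta$ from the strong maximum principle and Hopf's lemma and then re-running the comparison against $u_2(\cdot,\cdot+\eta)$ --- supplies a genuine proof of the \emph{strict} inequality $T_{\l_1}<T_{\l_2}$, whereas the paper passes from the pointwise strict inequality $v_1>v_2$ on the common existence interval directly to strictness of the quenching times without justification. The one point to make fully rigorous in your write-up is the extraction of $\eta$ near $\partial\O$: there both $w(\cdot,t_0)$ and $u_2(\cdot,t_0+\eta)-u_2(\cdot,t_0)$ vanish, so you should compare the Hopf lower bound $w(x,t_0)\geq c\,\dist(x,\partial\O)$ with the Lipschitz upper bound $u_2(x,t_0+\eta)-u_2(x,t_0)\leq \eta\,C\,\dist(x,\partial\O)$ (using that $\partial_t u_2=0$ on $\partial\O$ and $\nabla\partial_t u_2$ is bounded up to the boundary on compact time intervals); this closes the gap you flag as the delicate point.
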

\begin{proof}
    Let $\hat{v}=v_1-v_2$, where $v_i$, $i=1,2$, are the corresponding solution of $(V_{\l_i,\d})$, $i=1,2$, respectively. Then $\hat{v}|_{\partial\O}(x,t)=\hat{v}(x,0)=0$ and
\begin{align*}
    \hat{v}_t-\triangle\hat{v}=\l_1\rho_{\l_1,\d}(v_1)-\l_2\rho_{\l_2,\d}(v_2)
    >\l_2[\rho_{\l_2,\d}(v_1)-\rho_{\l_2,\d}(v_2)]=\l_2\rho'_{\l_2,\d}(\theta)\hat{v},
\end{align*}
with $\rho'_{\l_2,\d}(\theta)\geq0$, for some function $\theta$. Hence, $v_1>v_2$ in $\O\times(0,\min{\{T_{\l_1},T_{\l_2}\}})$. Thus, $T_{\l_1}<T_{\l_2}$.
\end{proof}

\begin{remark}\label{remark-different delta}
    Fix the voltage $\l>\max\{\l_{\d_1}^*,\l_{\d_2}^*\}$, if $\d_1>\d_2>0$, then $T_{\d_1}<T_{\d_2}$, where $T_{\d_i}$ are the finite quenching time corresponding to $\d_i$, $i=1,2$. This observation follows immediately from
\begin{align*}
    \partial_t u_1-\triangle u_1=\l\frac{1+\d_1|\nabla u_1|^2}{(1-u_1)^2}>\l\frac{1+\d_2|\nabla u_1|^2}{(1-u_1)^2},
\end{align*}
which means that $u_1>u_2$ in $\O_{\min\{T_{\d_1},T_{\d_2}\}}$. Hence, $T_{\d_1}<T_{\d_2}$.
\end{remark}

\section{Quenching set}

\setcounter{equation}{0}

In this section, we assume that $\O$ is a bounded convex subset of $\Rn$.
It is followed by the moving-plane argument that the quenching set of any finite-time quenching solution to \eqref{F} is a compact subset of $\O$.

\begin{theorem}[Compactness of the quenching set]\label{compact}
    Suppose $\O\subset\Rn$ is convex, and $u(x,t)$ is a solution to \eqref{F} which quenches in finite time $T$. Then the set of the quenching points is a compact subset of $\O$.
\end{theorem}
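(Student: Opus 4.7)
The plan is to adapt the Friedman--McLeod strategy combining the moving-plane method with an auxiliary-function maximum-principle bound near $\partial\Omega$, as carried out for the fringing-free MEMS model in \cite{Guo}. I work throughout with the transformed variable $v=\zeta_{\lambda,\delta}(u)$ from Section~2: $v$ satisfies the purely semilinear equation \eqref{V} with smooth, positive, increasing, strictly convex source $\rho_{\lambda,\delta}$, and quenching of $u$ at $(x,T)$ is equivalent to $v(x,t)\to+\infty$. This substitution absorbs the fringing gradient term $\delta|\nabla u|^{2}$ into the source, which is what makes the standard parabolic machinery apply.

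\emph{Moving planes.} Fix $x_{0}\in\partial\Omega$. Convexity of $\Omega$ provides a direction $e$ interior to $\Omega$ at $x_{0}$, a neighborhood $U\ni x_{0}$, and $\eta>0$ such that for each $\mu\in(0,\eta]$ the hyperplane $T_{\mu}=\{(x-x_{0})\cdot e=\mu\}$ bounds a cap $\Sigma_{\mu}\subset U\cap\Omega$ whose reflection $\sigma_{\mu}(\Sigma_{\mu})$ across $T_{\mu}$ lies in $\Omega$. Since $\rho_{\lambda,\delta}\in C^{1}$, the difference $w_{\mu}:=v\circ\sigma_{\mu}-v$ satisfies a linear parabolic equation with locally bounded coefficient on $\Sigma_{\mu}\times[0,T)$, vanishes at $t=0$ and on $T_{\mu}$, and is non-negative on $\Sigma_{\mu}\cap\partial\Omega$ (since $v|_{\partial\Omega}=0$). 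The parabolic maximum principle together with Hopf's lemma yield $\partial v/\partial e>0$ in $\Sigma_{\eta}\times[0,T)$.

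\emph{Auxiliary function and integration.} Take local coordinates with $e=e_{1}$ and $x_{0}=0$, and work in $B=\Sigma_{\eta}$. Set
\begin{equation*}
	J(x,t)=v_{x_{1}}(x,t)-\varepsilon\,\psi(x_{1})\,v(x,t)\rho_{\lambda,\delta}(v(x,t)),
\end{equation*}
with $\psi(x_{1})=\sin(\pi x_{1}/\eta)$ and $\varepsilon>0$ small. From $\psi(0)=\psi(\eta)=0$, $v(\cdot,0)\equiv 0\equiv v|_{\partial\Omega}$, Hopf's lemma on $\partial\Omega\cap B$, and the positivity of $v_{x_{1}}$ on $T_{\eta}\cap B$ from the moving-plane step, $J\ge 0$ holds on the parabolic boundary of $B\times(0,T)$ once $\varepsilon$ is small. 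A direct computation from \eqref{V}, using the identity $v\rho_{\lambda,\delta}\rho'_{\lambda,\delta}-(\rho_{\lambda,\delta}+v\rho'_{\lambda,\delta})\rho_{\lambda,\delta}=-\rho_{\lambda,\delta}^{2}$, yields
\begin{align*}
	(\partial_{t}-\triangle)J-\lambda\rho'_{\lambda,\delta}(v)J=\varepsilon\Bigl\{&-\lambda\psi\rho_{\lambda,\delta}(v)^{2}+\psi''v\rho_{\lambda,\delta}(v)\\
	&+2\psi'[\rho_{\lambda,\delta}(v)+v\rho'_{\lambda,\delta}(v)]v_{x_{1}}+\psi[2\rho'_{\lambda,\delta}(v)+v\rho''_{\lambda,\delta}(v)]|\nabla v|^{2}\Bigr\}\le 0,
\end{align*}
the final inequality holding for $\varepsilon$ small because $\psi''<0$ and the dominant term $-\lambda\psi\rho_{\lambda,\delta}^{2}$, in which the fringing exponential $e^{\lambda\delta/(1-u)}$ enters quadratically, controls the remaining positive contributions in the quenching regime. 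The parabolic maximum principle then gives $J\ge 0$ in $B\times[0,T)$, i.e.\ $v_{x_{1}}\ge\varepsilon\psi(x_{1})v\rho_{\lambda,\delta}(v)$. Writing $H(V):=\int_{V}^{\infty}dv/[v\rho_{\lambda,\delta}(v)]$ (smooth, decreasing, and finite for every $V>0$ by property~(2) of Section~2), this inequality rewrites as $(d/dx_{1})H(v(x_{1},x',t))\le-\varepsilon\psi(x_{1})$. Integrating from $X\in(0,\eta/2]$ to $\eta$ and using $H(v(\eta,x',t))\ge 0$ gives $H(v(X,x',t))\ge\varepsilon\int_{\eta/2}^{\eta}\psi=:\varepsilon I_{0}>0$, hence $v(X,x',t)\le H^{-1}(\varepsilon I_{0})=:V_{0}<\infty$ uniformly in $t\in[0,T)$. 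Translating back, $u\le\zeta_{\lambda,\delta}^{-1}(V_{0})<1$ uniformly on $\Sigma_{\eta/2}\times[0,T)$, so $\Sigma_{\eta/2}$ is free of quenching points.

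\emph{Covering and main obstacle.} Since $\partial\Omega$ is compact, finitely many caps $\Sigma_{\eta/2}$ cover a uniform tubular neighborhood $\{x\in\Omega:\dist(x,\partial\Omega)<\varrho\}$ on which $u$ stays uniformly bounded away from $1$ for $t\in[0,T)$. Therefore $\Sigma\subset\{x\in\Omega:\dist(x,\partial\Omega)\ge\varrho\}$, a compact subset of $\Omega$, and closedness of $\Sigma$ is immediate from its definition. The central technical difficulty is securing the differential inequality above: the derivatives $\rho'_{\lambda,\delta}$ and $\rho''_{\lambda,\delta}$ from \eqref{rho'} carry the fringing exponential $e^{\lambda\delta/(1-u)}$, producing large cross-terms $2\psi'(\rho_{\lambda,\delta}+v\rho'_{\lambda,\delta})v_{x_{1}}$ and $\psi(2\rho'_{\lambda,\delta}+v\rho''_{\lambda,\delta})|\nabla v|^{2}$, but the same exponential enters $\rho_{\lambda,\delta}^{2}$ quadratically and hence dominates them as $u\to 1^{-}$. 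Passing to $v$ through the substitution $\zeta_{\lambda,\delta}$ is essential: attempted directly on \eqref{F}, the gradient term $\delta|\nabla u|^{2}$ would leave an unabsorbable cross-contribution in the linearization.
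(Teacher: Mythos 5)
Your overall architecture — pass to $v=\zeta_{\l,\d}(u)$ so the fringing term is absorbed into a convex source $\rho_{\l,\d}$, run moving planes from a boundary point to get monotonicity of $v$ in a cap, then use an auxiliary function and a maximum principle to convert that monotonicity into a quantitative bound, and finally cover $\partial\O$ by compactness — is exactly the paper's strategy. The moving-plane step and the covering step are fine (modulo the minor points that the strict sign of $\partial v/\partial e$ only holds for $t\ge t_0>0$, and that working in a neighborhood $U$ rather than in the full cap $\O\cap\{(x-x_0)\cdot e<\eta\}$ introduces an artificial lateral boundary on which you do not control $J$). The genuine gap is in the differential inequality for $J=v_{x_1}-\e\psi(x_1)\,v\rho_{\l,\d}(v)$. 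First, the entire right-hand side of your identity carries the overall factor $\e$, so "for $\e$ small" cannot make the bracket nonpositive: its sign is independent of $\e$. Second, the term $\psi\,[2\rho'+v\rho'']\,|\nabla v|^2$ is \emph{not} dominated by $\l\psi\rho^2$: using \eqref{rho'} and $v=\zeta_{\l,\d}(u)\sim\frac{(1-u)^2}{\l\d}e^{\l\d/(1-u)}$, one finds $2\rho'+v\rho''\sim 2\l\d(1-u)^{-4}$ while $|\nabla v|^2=e^{2\l\d/(1-u)}|\nabla u|^2$, so the ratio of this term to $\l\psi\rho^2$ is asymptotically $2\d|\nabla u|^2$ — the exponentials cancel exactly, and no a priori bound $|\nabla u|^2<1/(2\d)$ is available near $\partial\O$ before the very boundedness you are trying to prove. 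Third, the cross term $2\psi'(\rho+v\rho')v_{x_1}$ is strictly positive and dominant just inside the boundary face: as $x_1\to0^+$ one has $\psi\to0$ but $\psi'\to\pi/\eta$ and $v_{x_1}\ge c>0$ by Hopf, so the bracket is positive there and the maximum principle cannot be applied. Absorbing $v_{x_1}=J+\e\psi F$ does not rescue this: the residual term $2\e\psi\psi'F'F\sim 4\e\psi\psi'\,v\rho^2$ beats $\l\psi\rho^2$ wherever $\psi'>0$ because $v\to\infty$ at quenching.

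The paper sidesteps all three problems by taking the \emph{linear} comparison function $J=v_{x_1}+\e_1(x_1-\a_0)$ (in its sign convention $v_{x_1}<0$ in the cap), for which $(\partial_t-\triangle)J=\l\rho_{\l,\d}'(v)\,v_{x_1}\le0$ with no gradient or cross terms whatsoever, and verifies $J<0$ on the boundary of the cap by comparing $v$ with the solution of the heat equation to obtain a uniform Hopf-type bound $\partial v/\partial\nu\le-C_0<0$. Integrating $-v_{x_1}\ge\e_1(x_1-\a_0)$ then excludes quenching in the cap. If you replace your auxiliary function by the paper's linear one (or by any $F$ with $F''\le0$ and $F'F=o(\rho^2)$, neither of which holds for $F=v\rho_{\l,\d}$), the rest of your write-up — in particular your clean integration via $H(V)=\int_V^\infty dv/[v\rho_{\l,\d}(v)]$ and the covering argument — goes through.
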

\begin{proof}(Adaption of moving-plane argument)
    It is equivalent to show that the set of the blow-up points of $v$ in \eqref{V} is a compact subset of $\O$.

Let us denote $x=(x_1,x')\in\Rn$, where $x'=(x_2,x_3,\cdots,x_n)\in\mathbb{R}^{n-1}$. Take any point $y_0\in\partial\O$, and assume without loss of generality that $y_0=0$ and that the half space $\{x_1>0\}$ is tangent to $\O$ at $y_0$.

Let $\O_\a^+=\O\cap\{x_1>\a\}$, $\a<0$, $|\a|$ small, and $\O_\a^-=\{x=(x_1,x')\in\Rn:\,(2\a-x_1,x')\in\O_\a^+\}$, the reflection of $\O_\a^+$ with respect to $\{x_1=\a\}$.

First, from the maximum principle, we observe that
\begin{align}\label{v>0}
    v\geq0,
\end{align}
for $(x,t)\in\O_T$ and $\frac{\partial v}{\partial\nu}(t_0)<0$ on $\partial\O$ for some small $t_0\in(0,T)$.

Let us consider
\[
    \bar{v}(x,t)=v(x_1,x',t)-v(2\a-x_1,x',t),
\]
for $x\in\O_\a^-$, then $\bar{v}$ satisfies
\[
    \partial_t\bar{v}-\triangle\bar{v}=\l\left[\rho_{\l,\d}(v(x_1,x',t))-\rho_{\l,\d}(v(2\a-x_1,x',t))\right]=\l c(x,t)\bar{v},
\]
where $c(x,t)$ is a bounded function. It is clear that $\bar{v}=0$ on $\{x_1=\a\}$ and $\bar{v}=v(x_1,x',t)\geq0$ on $\partial\O_\a^-\cap\{x_1<\a\}\times(0,T]$. If $\a$ is small enough, then $\bar{v}(x,t_0)=v(x_1,x',t_0)-v(2\a-x_1,x',t_0)\geq0$, for $x\in\O_\a^-$. Applying maximum principle, we conclude that
\[
    \bar{v}>0\quad\textup{in}\ \O_\a^-\times(t_0,T)\quad\textup{and}\quad\frac{\partial\bar{v}}{\partial v_1}=-\frac{2\partial v}{\partial x_1}>0\quad\textup{on}\ \{x_1=\a\}.
\]
Since $\a$ is arbitrary, it follows by varying $\a$ that
\begin{align}\label{v-bdry}
    \frac{\partial v}{\partial x_1}<0,
\end{align}
for $x\in\O_{\a_0}^+$, $t_0<t<T$, provided that $\a_0$ is small enough.

Let us consider
\[
    J=v_{x_1}+\e_1(x_1-\a_0)
\]
in $\O_{\a_0}^+\times(t_0,T)$, where $\e_1=\e_1(\a_0,t_0)>0$ is a constant to be determined later. Through direct computations, we obtain that
\begin{align*}
    \partial_t J-\triangle J=&\triangle(v_{x_1})+\l\frac{e^{\frac{\l\d}{1-u}}}{(1-u)^3}u_{x_1}\left[2+\frac{\l\d}{1-u}\right]-\triangle(v_{x_1})\\
    =&\frac{\l v_{x_1}}{(1-u)^3}\left[2+\frac{\l\d}{1-u}\right]\leq0,
\end{align*}
in $\O_{\a_0}^+\times(t_0,T)$, where $u$ is the corresponding solution to \eqref{F}. Therefore, $J$ can't obtain positive maximum in $\O_{\a_0}^+\times(t_0,T)$. Next, $J<0$ on $\{x_1=\a_0\}$ by \eqref{v-bdry}. From \eqref{v>0}, $\frac{\partial v(x,t_0)}{\partial x_1}\leq C<0$.

If we can show $J<0$ on $\Gamma\times(t_0,T)$, where $\Gamma=\partial\O_{\a_0}^+\cap\partial\O$, then
\begin{align}\label{J<0}
    J<0,
\end{align}
in $\O_{\a_0}^+\times(t_0,T)$. To show \eqref{J<0}, we compare $v$ with the solution $z$ of the heat equation
\begin{equation}\label{heat}
    \left\{\begin{aligned}
        z_t-\triangle z&=0\quad\textup{in}\quad\O\times(t_0,T)\\
        z(x,t)&=0\quad\textup{on}\quad\partial\O\times(t_0,T)\\
        z(x,t_0)&=0\quad\textup{in}\quad\O\times(t_0,T).
    \end{aligned}\right.
\end{equation}
Since  $\l\rho_{\l,\d}(v)\geq0$, we have $v\geq z$. Consequently, $\frac{\partial v}{\partial\nu}<\frac{\partial z}{\partial\nu}\leq-C_0<0$ on $\partial\O\times(t_0,T)$. It follows that, if $x\in\Gamma$,
\[
    J\leq-C_0\frac{\partial x_1}{\partial\nu}+\e_1(x_1-\a_0)<0,
\]
provided $\e_1$ small enough. Now, the maximum principle yields that there exists $\e_1=\e_1(\a_0,t_0)$ small enough such that $J\leq0$ in $\O_{\a_0}^+\times(t_0,T)$, i.e.
\[
    -v_{x_1}=|v_{x_1}|\geq\e_1(x_1-\a_0),
\]
if $x'=0$, $\a_0\leq x_1<0$. Integrating with respect to $x_1$, we get for any $\a_0<y_1<0$,
\[
    -v(y_1,0,t)+v(\a_0,0,t)\geq\frac{\e_1}2|y_1-\a_0|^2.
\]
It follows that
\begin{align*}
    \liminf_{t\rightarrow T^-}v(0,t)=&\liminf_{t\rightarrow T^-}\lim_{y_1\rightarrow0^-}v(y_1,0,t)\\
    \leq&\liminf_{t\rightarrow T^-}\lim_{y_1\rightarrow0^-}\left[v(\a_0,0,t)-\frac{\e_1}2|y_1-\a_0|^2\right]<\infty.
\end{align*}
Thus, every point in $\{x'=0,\ \a_0<x_1<0\}$ is not a blow-up point. The above proof shows that $\a_0$ can be chosen independent of $y_0\in\partial\O$. Hence, by varying $y_0\in\partial\O$, we conclude that there is an $\O$-neighborhood $\O'$ of $\partial\O$ such that each point $x\in\O'$ is not a blow-up point. Since the blow-up points lie in a compact subset of $\O$, it is clearly a closed set.
\end{proof}

In addition, if $\O=B_R(0)$ is a ball of radius $R$ centered at the
origin, then according to \cite{GNN} we conclude that any solution $u(x,t)$ is indeed
radial symmetric, i.e. $u(x,t)=u(r,t)$, with $r=|x|\in[0,R]$.
Furthermore, we can show that the only possible quenching point is
the origin.

\begin{theorem}\label{quench at origin}
    Suppose $\O=B_R$. If $\l>\l_\d^*$, then the solution quenches only at $r=0$. That is, the origin is the unique quenching point.
\end{theorem}
\begin{lemma}\label{lemma-v_r negative}
    $v_r<0$ in $\O_T\cap\{r>0\}$.
\end{lemma}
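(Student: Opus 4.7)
Since $v = \zeta_{\l,\d}(u)$ inherits the radial symmetry of $u$ established before the lemma statement, it suffices to show that the Cartesian derivative $J := v_{x_1}(x, t)$ is strictly negative at every interior point of the half-ball $B_R^+ := B_R \cap \{x_1 > 0\}$ for $t \in (0, T)$. Specializing to $x = (r, 0, \ldots, 0)$ with $r > 0$ then gives $v_r(r, t) < 0$, and radiality extends the conclusion to all of $\O_T \cap \{r > 0\}$. The plan is to differentiate ($V_{\l,\d}$) in $x_1$ and apply the parabolic maximum principle to the resulting linear equation for $J$ on $B_R^+ \times (0, T')$ for arbitrary $T' < T$.

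Differentiating ($V_{\l,\d}$) in $x_1$ yields the linear parabolic equation
\[ J_t - \triangle J = \l\, \rho'_{\l,\d}(v)\, J \quad \text{in } B_R \times (0, T). \]
On the parabolic boundary of $B_R^+ \times (0, T')$, I would verify $J \leq 0$ piece by piece. On the flat face $\{x_1 = 0\} \cap \bar B_R$, radiality of $v$ gives $J = 0$. On $\{t = 0\}$, the identity $v \equiv 0$ gives $J \equiv 0$. On the curved face $\p B_R \cap \{x_1 > 0\}$, the strong maximum principle applied to $v$ with strictly positive source $\l \rho_{\l,\d}(v) \geq \l > 0$ gives $v > 0$ in $B_R$ for $t > 0$, so Hopf's lemma produces $\p_\nu v(\cdot, t) < 0$ on $\p B_R$; using $\nu = x/R$ together with radiality, this reads $J = v_r(R, t)\, x_1/R < 0$ on that face.

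Applying the weak maximum principle to $\tilde J := e^{-M t} J$ with $M$ large enough that $M - \l\rho'_{\l,\d}(v) \geq 0$ on the compact set $\bar B_R^+ \times [0, T']$ then gives $J \leq 0$ on that set. The strong parabolic maximum principle upgrades this to strict $J < 0$ in the interior: an interior zero of $J$ would force $J$ to vanish on its whole parabolic connected component, contradicting the strict negativity just established on the curved face for $t > 0$. Letting $T' \nearrow T$ completes the argument.

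The main obstacle is that the zeroth-order coefficient $\rho'_{\l,\d}(v)$ is only locally bounded and blows up as $t \to T^-$, which is precisely why I truncate to $[0, T']$ with $T' < T$ and pass to the limit rather than applying the maximum principle directly on $(0, T)$. A secondary, more routine technical point is the strict positivity of $v$ for $t > 0$ needed for Hopf's lemma, which is immediate here because $\rho_{\l,\d}(0) = 1$ makes ($V_{\l,\d}$) behave like $v_t - \triangle v = \l$ near $t = 0$, guaranteeing instantaneous strict positivity of $v$ in the interior.
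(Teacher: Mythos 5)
Your proof is correct and follows essentially the same route as the paper's: differentiate the equation satisfied by $v$, check the sign of the derivative on the parabolic boundary (zero at $t=0$ from the zero initial data, strictly negative on the outer boundary via the strong maximum principle plus Hopf's lemma), and conclude with the parabolic maximum principle applied to the resulting linear equation with locally bounded zeroth-order coefficient. The only difference is cosmetic: you work with the Cartesian derivative $v_{x_1}$ on a half-ball, using the flat face $\{x_1=0\}$ where symmetry forces $v_{x_1}=0$, whereas the paper uses the weighted radial derivative $\bar v=r^{n-1}v_r$, which removes the singular first-order term and vanishes at $r=0$ automatically.
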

\begin{proof}
    Set $\bar{v}=r^{n-1}v_r$. Then \eqref{V} becomes
\begin{align}\label{vt-wr}
    v_t-\frac1{r^{n-1}}\bar{v}_r=\l\rho_{\l,\d}(v).
\end{align}
Differentiating with respect to $r$, we get
\[
    \bar{v}_t+\frac{n-1}r\bar{v}_r-\bar{v}_{rr}-\frac\l{(1-u)^3}\left(2+\frac{\l\d}{1-u}\right)\bar{v}=0.
\]
since $\bar{v}=r^{n-1}v_r<0$ on $\partial\O\times(0,T)$ (by maximum principle) and $\bar{v}(r,0)=0$ by \eqref{V}, we deduce by maximum principle that $v_r<0$ in $\O_T\cap\{r>0\}$.
\end{proof}

\begin{proof}[Proof of Theorem \ref{quench at origin}]
Let us consider as in Theorem 2.3, \cite{FM}
\[
    J=\bar{v}+c(r)F(v),
\]
where $\bar{v}=r^{n-1}v_r$ is defined as in Lemma \ref{lemma-v_r negative}, $F$, $c$ are positive functions to be determined and
$F'\geq0$, $F''\geq0$. We aim to show $J\leq0$ in $\O_T$.
Through direct computations, we have
\begin{align*}
    J_t+\frac{n-1}rJ_r-J_{rr}
    =&\frac\l{(1-u)^3}\left[2+\frac{\l\d}{1-u}\right]\bar{v}\\
        &+cF'f+\frac{2(n-1)}rcF'v_r+\frac{n-1}rc'F-cF''v_r^2-2c'F'v_r-c''F\\
    \leq&\left\{\frac\l{(1-u)^3}\left[2+\frac{\l\d}{1-u}\right]+\frac{2(n-1)}{r^n}cF'-\frac{2c'F'}{r^{n-1}}\right\}J\\
        &+\left\{-\frac\l{(1-u)^3}\left[2+\frac{\l\d}{1-u}\right]cF+cF'\frac{\l e^{\frac{\l\d}{1-u}}}{(1-u)^2}\right.\\
        &\phantom{+\{\}}\left.-\frac{2(n-1)}{r^n}c^2F'F+\frac{n-1}rc'F+\frac2{r^{n-1}}cc'F'F-c''F\right\}\\
    :=& AJ+B,
\end{align*}
by using $\bar{v}=J-cF$ and $\bar{v}=r^{n-1}v_r$. It is easy to see that $A$
is a bounded function for $0<r<R$. Let us choose
\begin{align*}
    c(r)=\e r^n\quad\textup{and}\quad F(v)=\frac{e^{\frac{\l\d}{1-u}}}{(1-u)^\gamma},
\end{align*}
where $u=\zeta_{\l,\d}^{-1}(v)$, $\gamma\geq0$ is some constant to be determined later. Direct computations yield that
\begin{align*}
    B=c(r)e^{\frac{\l\d}{1-u}}\left\{\frac{\l(\gamma-2)}{(1-u)^{\gamma+3}}+2\e\left[\frac{\l\d}{(1-u)^{2\gamma+2}}+\frac\gamma{(1-u)^{2\gamma+1}}\right]\right\}\leq0,
\end{align*}
if $\gamma<1$ and $\e\ll1$. $J=0$ at $r=0$, due
to $c(0)=0$ and it follows that $J$ can't obtain positive maximum in
$\O_T$ or on $\{t=T\}$.

Next, we observe that $J$ can't obtain positive maximum on
$\{r=R\}$, if $J_r\leq0$ on $\{r=R\}$. Since
\[
    J_r(R)=\bar{v}_r+cF'v_r+c'F\leq \bar{v}_r+c'F\overset{\eqref{vt-wr}}=-R^{n-1}\l
    e^{\l\d}+c'(R)F(0)\leq0,
\]
provided that $\e\ll1$. Finally, by maximum
principle, there exists $0<t_0<T$ such that $v_r(r,t_0)<0$ for
$0<r\leq R$ and $v_{rr}(0,t_0)<0$. Thus, $J(r,t_0)<0$ for $0\leq
r<R$, provided $\e\ll1$.

Therefore, by maximum principle, we conclude that $J\leq0$ in
$B_R\times[t_0,T]$, for any $0<t_0<T$. That is,
\[
    -r^{n-1}e^{\frac{\l\d}{1-u}}u_r=-r^{n-1}v_r\geq c(r)F(v)=\frac{\e r^ne^{\frac{\l\d}{1-u}}}{(1-u)^\gamma},
\]
for $0\leq\gamma<1$. It deduces that 
\[
	\frac{d}{dr}\left[\frac1{\gamma+1}[1-u(r,t)]^{\gamma+1}\right]\geq\e r.
\]
Integrating from $0$ to $r$, we obtain that
\[
	\frac1{\gamma+1}[1-u(r,t)]^{\gamma+1}-\frac1{\gamma+1}[1-u(0,t)]^{\gamma+1}\geq\frac12\e r^2.
\]
It is known that $0$ is in the set of quenching points. So,
\begin{align}\label{1-u}
	[1-u(r,t)]^{\gamma+1}\geq\frac{\gamma+1}2\e r^2.
\end{align}
If for any $0<r<R$, $u(r,t)\rightarrow1$, as $t\rightarrow T$, then the left-hand side tends to $0$. This contradicts with \eqref{1-u}. Therefore, $0$ is the only quenching point.
\end{proof}

\section{Quenching behavior}

\setcounter{equation}{0}

\subsection{Upper bound estimate}

We first obtain an one-side quenching estimate. The similar result has been obtained in \cite{LW} for only one dimension case, i.e., $x\in\mathbb{R}$.

\begin{lemma}[One-side quenching estimate]\label{lemma-upper bound}
	If $\O\subset\Rn$ is a bounded convex domain, and $u(x,t)$ is a quenching solution of \eqref{F} in finite time, then there exists a bounded positive constant $M>0$ such that 
\[
	M(T-t)^{\frac13}\leq1-u(x,t),
\]
for all $\O_T$. Moreover, $u_t\rightarrow+\infty$ as $u$ quenches.
\end{lemma}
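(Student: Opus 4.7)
The plan is to adapt the Friedman--McLeod auxiliary-function method, managing the extra drift produced by the fringing term. Set
\[
  J(x,t) := u_t - \epsilon(1-u)^{-2}
\]
for a small constant $\epsilon>0$ to be chosen. Differentiating \eqref{F} in $t$ yields
\[
  u_{tt} - \Delta u_t = \frac{2\l\d}{(1-u)^2}\nabla u\cdot\nabla u_t + \frac{2\l(1+\d|\nabla u|^2)}{(1-u)^3}u_t,
\]
while computing $\partial_t(1-u)^{-2}-\Delta(1-u)^{-2}$ with the help of \eqref{F} gives
\[
  \partial_t(1-u)^{-2} - \Delta(1-u)^{-2} = 2\l(1+\d|\nabla u|^2)(1-u)^{-5} - 6(1-u)^{-4}|\nabla u|^2.
\]
Substituting $u_t=J+\epsilon(1-u)^{-2}$ and $\nabla u_t=\nabla J+2\epsilon(1-u)^{-3}\nabla u$, I expect the linear terms to rearrange into
\[
  \mathcal{L}J := J_t - \Delta J - \frac{2\l\d}{(1-u)^2}\nabla u\cdot\nabla J - \frac{2\l(1+\d|\nabla u|^2)}{(1-u)^3}J = \frac{4\l\d\epsilon|\nabla u|^2}{(1-u)^5} + \frac{6\epsilon|\nabla u|^2}{(1-u)^4} \geq 0.
\]
The fringing correction generates an extra first-order drift but does not spoil the good sign of $\mathcal{L}J$.

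Next I will invoke \thm{compact} to fix an open set $\O'$ with the quenching set $\Sigma\subset\O'\subset\subset\O$. Since $\partial\O'$ is separated from $\Sigma$, there is $M^*<1$ with $u\leq M^*$ on $\partial\O'\times[0,T]$, and because $u_t$ is continuous and strictly positive on $\bar\O\times(0,T)$ (from the strong maximum principle already used in \thm{thm-global existence}), I can choose $t_0\in(0,T)$ and $c_0>0$ with $u_t\geq c_0$ on $\partial\O'\times[t_0,T]$. Shrinking $\epsilon$ so that $\epsilon(1-M^*)^{-2}\leq c_0$ and so that $J(\cdot,t_0)\geq 0$ on $\bar\O'$ (possible at small $t_0$ since there $u\approx 0$ and $u_t\approx \l$), I obtain $J\geq 0$ on the parabolic boundary of $\O'\times[t_0,T']$ for every $T'<T$. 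The coefficients of $\mathcal{L}$ are bounded on this truncated cylinder, so the parabolic maximum principle combined with $\mathcal{L}J\geq 0$ forces $J\geq 0$ inside; letting $T'\to T$ propagates this to $\O'\times[t_0,T)$, yielding the key pointwise bound $u_t\geq\epsilon(1-u)^{-2}$ there.

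The one-sided rate then follows from an ODE integration: for each $x\in\O'$, $\frac{d}{dt}(1-u)^3=-3(1-u)^2 u_t\leq -3\epsilon$, so integrating from $t$ to $T$ and using $(1-u(x,T))^3\geq 0$ delivers $(1-u(x,t))^3\geq 3\epsilon(T-t)$, i.e.\ $1-u(x,t)\geq (3\epsilon)^{1/3}(T-t)^{1/3}$. On $\O\setminus\O'$ the solution stays bounded away from $1$ uniformly in $t\in[0,T]$, and on $(0,t_0]$ the same is true on all of $\O$; both regimes therefore inherit the same bound, possibly with a smaller constant $M$. Finally, the divergence $u_t\to+\infty$ as $u\to 1^-$ is an immediate consequence of $u_t\geq\epsilon(1-u)^{-2}$.

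The main obstacle is the sign of $J$ on the lateral boundary. A naive application of the maximum principle on the full cylinder $\O_T$ fails because $u|_{\partial\O}\equiv 0$ forces $u_t|_{\partial\O}\equiv 0$, while $(1-u)^{-2}|_{\partial\O}\equiv 1$, so $J|_{\partial\O}=-\epsilon<0$. The role of \thm{compact} is precisely to let us retreat to a compactly contained subdomain $\O'$ on whose boundary the strict positivity of $u_t$ supplies the needed lower bound. A secondary subtlety is that the coefficient $\frac{2\l(1+\d|\nabla u|^2)}{(1-u)^3}$ of $J$ in $\mathcal{L}$ blows up as $t\to T$; this is handled by running the maximum principle on each $[t_0,T']$ with $T'<T$ and passing to the limit $T'\to T$ after the sign of $J$ has been secured.
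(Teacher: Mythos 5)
Your proof is correct and is in essence the same Friedman--McLeod argument the paper uses: an auxiliary function of the form $u_t-\e g(u)$, the compactness of the quenching set from Theorem~\ref{compact} to retreat to an interior cylinder on whose lateral boundary $u_t$ is bounded below, the maximum principle on truncated cylinders, and an ODE integration. The one organizational difference is that the paper first passes to $v=\zeta_{\l,\d}(u)$, which turns \eqref{F} into the drift-free semilinear problem \eqref{V}, and works with $J^\e=v_t-\e\rho_{\l,\d}(v)$; convexity of $\rho_{\l,\d}$ then gives $J^\e_t-\triangle J^\e\geq\l\rho_{\l,\d}'(v)J^\e$ with no gradient terms to track. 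Since $J^\e=e^{\l\d/(1-u)}\left(u_t-\e(1-u)^{-2}\right)$, your $J$ is the same function up to a positive factor, and your direct computation (which checks out: the $\e(1-u)^{-5}$ cross terms cancel and the surviving source terms are nonnegative) reaches the same conclusion at the cost of carrying the drift $\frac{2\l\d}{(1-u)^2}\nabla u\cdot\nabla J$ through the maximum principle --- harmless, since $\nabla u$ is bounded on $\bar\O'\times[t_0,T']$ for each $T'<T$. One point worth making explicit (the paper is equally terse here): for $M$ not to degenerate as $t\to T$ you need $u_t\geq c_0>0$ on $\partial\O'$ uniformly up to $t=T$ itself; this holds because $\partial\O'$ avoids the quenching set, so $u$ is smooth in a neighborhood of $\partial\O'\times\{T\}$ and the strong maximum principle applied there to $u_t\geq0$ yields strict positivity at the terminal time as well.
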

\begin{proof}
	Since $\O$ is a convex bounded domain, we show in Theorem \ref{compact} that the quenching set of $u$ is a compact subset of $\O$. It is now suffices to discuss the point $x_0$ lying in the interior domain $\O_\eta=\{x\in\O:\,\textup{dist}(x,\partial\O)>\eta\}$, for some small $\eta>0$, i.e. there is no quenching point in $\O_\eta^c:=\O\setminus\O_\eta$.

For any $t_1<T$, we recall the maximum principle gives $u_t>0$, for all $(x,t)\in\O\times(0, t_1)$. Furthermore, the boundary point lemma shows that the exterior normal derivative of $u_t$ on $\partial\O$ is negative for $t>0$. This implies that for any small $0<t_0<T$, there exists a positive constant $C=C(t_0,\eta)$ such that $u_t(x,t_0)\geq C>0$, for all $x\in\bar{\O}_\eta$. For any $0<t_0<t_1<T$, we claim that
\[
	J^\e(x,t)=v_t-\e \rho_{\l,\d}(v)\geq0,
\]
for all $(x,t)\in\O_\eta\times(t_0,t_1)$, where $v$ is the corresponding solution to \eqref{V}. In fact, it is clear that there exists $C_\eta=C(t_0,t_1,\eta)>0$ such that $v_t=e^{\frac{\l\d}{1-u}}u_t\geq C_\eta$ on $\O_\eta\times(t_0,t_1)$. And further, we can choose $\e=\e(t_0,t_1,\eta)>0$ small enough, so that $J^\e\geq0$ on the parabolic boundary of $\O_\eta\times(t_0,t_1)$, due to the local boundedness of $\rho_{\l,\d}(v)$ on $\partial\O_\eta\times(t_0,t_1)$. Then the claim is followed by the maximum principle and the direct computations:
\begin{align*}
	J_t^\e-\triangle J^\e=\l\rho_{\l,\d}'(v)J^\e+\e\rho_{\l,\d}''(v)|\nabla v|^2\geq\l\rho_{\l,\d}'(v)J^\e,
\end{align*} 
due to the convexity of $\rho_{\l,\d}$. This yields that for any $0<t_0<t_1<T$, there exists $\e=\e(t_0,t_1,\eta)>0$ such that 
\[
	u_t\geq\frac\e{(1-u)^2}, 
\] 
for all $\O_\eta\times(t_0,t_1)$. This inequality implies that $u_t\rightarrow\infty$ as $u$ touches down, and there exists $M>0$ such that
\begin{align}\label{one-side}
	M(T-t)^{\frac13}\leq1-u(x,t),
\end{align}
in $\O_\eta\times(0,T)$, due to the arbitrary of $t_0$ and $t_1$, where $M=M(\l,\d,\eta)$. Furthermore, one can obtain \eqref{one-side} for $\O\times(0,T]$, due to the boundedness of $u$ on $\O_\eta^c$.
\end{proof}

\subsection{Gradient esitmate}

We shall study the quenching rate for the higher derivatives of $u$. The idea of the proof is similar to Proposition 1, \cite{GK} and Lemma 2.6, \cite{Guo}.
\begin{lemma}\label{lemma-gradient estimate}
	Suppose $u$ is a quenching solution of \eqref{F} in finite time $T$. For any point $x=a\in\O_\eta$, for some small $\eta>0$. Then there exists a positive constant $M'$ such that
\begin{align}\label{gradient}
	|\nabla^mu(x,t)|(T-t)^{-\frac13+\frac m2}\leq M',
\end{align}
$m=1,2$, holds for $Q_R=B_R\times(T-R^2,T)$, for any $R>0$ such that $a+R\in\O_\eta$.
\end{lemma}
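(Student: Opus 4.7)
The plan is to adapt the parabolic rescaling argument of Proposition~1 in \cite{GK} and Lemma~2.6 in \cite{Guo} to our non-scale-invariant setting. Writing $v = 1 - u$, equation \eqref{F} reads $v_t - \Delta v = -\lambda(1+\delta|\nabla v|^2)/v^2$. For any $(x_0, t_0) \in Q_R$, set $\tau = T - t_0$ (note $\tau < R^2$) and introduce the rescaled function
$$\phi(y, s) \;=\; \tau^{-1/3}\, v\!\left(x_0 + \sqrt{\tau}\, y,\, t_0 + \tau s\right), \qquad (y, s)\in B_1\times[-1,0],$$
which is well defined provided $\tau$ is sufficiently small, thanks to $a + R \in \Omega_\eta$. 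A direct computation yields the rescaled PDE
$$\phi_s - \Delta_y \phi \;=\; -\frac{\lambda}{\phi^{2}} \;-\; \lambda\delta\,\tau^{-1/3}\,\frac{|\nabla_y \phi|^{2}}{\phi^{2}}.$$
Lemma~\ref{lemma-upper bound} applied to $v$ translates to the uniform lower bound $\phi(y,s) \geq M(1-s)^{1/3} \geq M > 0$ on $[-1, 0]$, so the zero-order nonlinearity $\lambda/\phi^{2}$ is bounded uniformly in $\tau$.

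The task is then to upgrade this to $\tau$-uniform $C^{2,\alpha}$-estimates for $\phi$ on a smaller interior parabolic cylinder, say $B_{1/2} \times [-1/2, 0]$. The chief obstacle is the factor $\tau^{-1/3}$ in front of the fringing term, which \emph{a priori} prevents the standard Schauder theory from yielding $\tau$-uniform constants. The remedy is that this extra contribution has the favorable sign: moving it to the left, the equation takes the Bernstein--Ladyzhenskaya form
$$\phi_s - \Delta_y \phi + \lambda\delta\,\tau^{-1/3}\,\frac{|\nabla_y \phi|^{2}}{\phi^{2}} \;=\; -\frac{\lambda}{\phi^{2}},$$
so the quadratic-gradient term acts as an absorbing damping rather than a divergent source. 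A Bernstein-type maximum principle argument applied to the auxiliary quantity $P = |\nabla_y \phi|^{2}$ with a suitable space-time cutoff, combined with the strict positivity $\phi \geq M$, yields a $\tau$-independent $L^\infty$ bound for $|\nabla_y \phi|$ on the smaller cylinder. With this gradient bound, the fringing contribution is a bounded source term, and standard parabolic Schauder estimates bootstrap to uniform $C^{2,\alpha}$ bounds on $\phi$.

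Translating the interior bounds back via the chain-rule identities
$$\nabla^{m}_{x} v(x_0, t_0) \;=\; \tau^{\,1/3 - m/2}\,\nabla^{m}_{y} \phi(0, 0), \qquad m = 1, 2,$$
we find $|\nabla^{m} u(x_0, t_0)|\,(T-t_0)^{-1/3 + m/2} = |\nabla^{m}_{y}\phi(0, 0)| \leq M'$, uniformly in $(x_0, t_0) \in Q_R$, which is the claimed estimate. The decisive and only nontrivial step is the Bernstein estimate of the second paragraph: one must exploit both the uniform lower bound on $\phi$ inherited from Lemma~\ref{lemma-upper bound} \emph{and} the absorbing sign of the fringing term in order to defeat the $\tau^{-1/3}$ blow-up of its coefficient. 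This is precisely where the non-scale-invariance of \eqref{F} makes itself felt, in contrast to the purely semilinear scale-invariant setting of \cite{GK} and \cite{Guo}.
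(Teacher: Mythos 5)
Your overall strategy --- parabolic rescaling about $(x_0,t_0)$, the lower bound $\phi\geq M$ inherited from Lemma \ref{lemma-upper bound}, interior $L^p$/Schauder estimates on a fixed rescaled cylinder, and then undoing the scaling --- is the same as the paper's: the paper uses $\bar u(z,\tau)=1-\mu^{-2/3}[1-u(x+\mu z,\,T-\mu^2(T-\tau))]$ with $\mu^2\sim T-t$, so its coefficient $\delta\mu^{-2/3}$ on the gradient term is exactly your $\delta\tau^{-1/3}$. You are also right that the crux is this diverging coefficient, which scale invariance would remove for $(F_{\lambda,0})$ but does not here.

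However, the step you yourself call decisive does not go through as described, for two reasons. First, the ``absorbing sign'' heuristic fails at the level of $P=|\nabla_y\phi|^2$: writing $K=\lambda\delta\tau^{-1/3}$ and computing $P_s-\Delta P$ from the equation $\phi_s-\Delta\phi=-\lambda\phi^{-2}-KP\phi^{-2}$, one finds $P_s-\Delta P+2K\phi^{-2}\nabla\phi\cdot\nabla P=4\lambda\phi^{-3}P+4K\phi^{-3}P^2-2|\nabla^2\phi|^2$; the differentiated fringing term contributes $+4K\phi^{-3}P^2$, a \emph{positive} source of exactly the dangerous order $\tau^{-1/3}$, not a damping (the favorable sign acts on $\phi$ itself, not on its gradient). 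A Bernstein argument would have to defeat this term using $-2|\nabla^2\phi|^2$ or a cleverer auxiliary quantity, and you have not exhibited one. Second, even granting a $\tau$-uniform bound $|\nabla_y\phi|\leq C$, the fringing term $\lambda\delta\tau^{-1/3}|\nabla_y\phi|^2/\phi^2$ is still of size $\tau^{-1/3}$, so it is not ``a bounded source term'' and the Schauder bootstrap to $C^{2,\alpha}$ (needed for $m=2$) does not close; for that you would need the much stronger bound $|\nabla_y\phi|^2\lesssim\tau^{1/3}$, i.e.\ $|\nabla_x u|$ bounded. For what it is worth, the paper's own proof does not resolve this either --- it applies interior $L^p$ and Schauder estimates to the rescaled equation without commenting on the $\mu^{-2/3}$ coefficient --- so you have correctly isolated the genuine difficulty, but your proposal does not close it.
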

\begin{proof}
	It suffices to consider the case $a=0$ by translation. We may focus on some fixed $r$, such that $\frac12R^2<r^2<R^2$ and denote $Q_r=B_r\times\left(T\left(1-\left(\frac rR\right)^2\right), T\right)$. 

Let us first show that $|\nabla u|$ and $|\nabla^2u|$ are uniformly bounded on compact subset of $Q_R$. Indeed, since $\rho_{\l,\d}(v)$ is bounded on any compact subset $D$ of $Q_R$, standard $L^p$ estimates for heat equations (see \cite{LSU}) give
\[
	\iint_{D}\left(|\nabla^2 v|^p+|v_t|^p\right)dxdt<C,
\]
for $1<p<\infty$ and any cylinder $D$ with $\bar{D}\subset Q_R$. And it also holds for $u$, i.e.
\[
	\iint_{D}\left(|\nabla^2 u|^p+|u_t|^p\right)dxdt<C,
\]
$1<p<\infty$, where $C$ is a generic constant and may vary from line to line. Choosing $p$ large, by Sobolev embedding theorem, we conclude that $u$ is H\"older continuous on $D$, so does $\rho_{\l,\d}(v)$. Therefore, Schauder's estimates for heat equation (see \cite{LSU}) show that $|\nabla v|$ and $|\nabla^2v|$ are bounded on any compact subets of $D$, so do $|\nabla u|$ and $|\nabla^2 u|$. In particular, there exists $M_1$ such that
\[
	|\nabla u|+|\nabla^2 u|\leq M_1,
\]
for $(x,t)\in B_r\times\left(T\left(1-\left(\frac rR\right)^2\right), T\left(1-\frac12\left(1-\frac rR\right)^2\right)\right)$, where $M_1$ depends on $R$, $n$ and $M$ given in \eqref{one-side}. 

We next prove \eqref{gradient} for $B_r\times\left[T\left(1-\frac12\left(1-\frac rR\right)^2\right), T\right)$. For fixed point $(x,t)\in B_r\times\left[T\left(1-\frac12\left(1-\frac rR\right)^2\right), T\right)$, we consider
\begin{align}\label{w-transform}
	\bar{u}(z,\tau)=1-\mu^{-\frac23}\left[1-u\left(x+\mu z,T-\mu^2(T-\tau)\right)\right],
\end{align}
where $\mu=\left[2\left(1-\frac tT\right)\right]^\frac12$, which satisfies
\begin{equation}\label{w-eqn}
    \left\{ \begin{aligned}
            &\bar{u}_{\tau}-\triangle_z\bar{u} = \l\frac{1+\d\mu^{-\frac23}|\nabla_z \bar{u}|^2}{(1-\bar{u})^2},\quad (z,\tau)\in\mathcal{O}_T\\
            &\bar{u}(z,\tau)=1-\mu^{-\frac23}<0, \quad (z,\tau)\in\partial\mathcal{O}_T\\
            &\bar{u}(z,0)=\bar{u}_0(z), \quad z\in\mathcal{O},
        \end{aligned} \right.
\end{equation}
where $\bar{u}_0(z)=1-\mu^{-\frac23}[1-u(x+\mu z,T(1-\mu^2))]<0$ and $\triangle_z\bar{u}_0+\l\frac{1+\d\mu^{-\frac23}|\nabla_z\bar{u}_0|^2}{(1-\bar{u}_0)^2}>0$ on $\mathcal{O}$. For the fixed point $(x,t)$, we define $\mathcal{O}:=\{z:\,x+\mu z\in\O\}$. It is implied by \eqref{w-transform} that $T$ is also the finite quenching time of $\bar{u}$, and the domain of $\bar{u}$ includes $Q_{r_0}$ for some $r_0=r_0(R)>0$. Since the quenching set of $u$ is a compact subset of $\O$, due to Theorem \ref{compact}, so does that of $\bar{u}$. Therefore, the argument of Lemma \ref{lemma-upper bound} can be applied to \eqref{w-eqn}, yielding that there exists a constant $M_2>0$ such that
\[
	1-\bar{u}(z,\tau)\geq M_2(T-\tau)^{\frac13},
\]
where $M_2$ depends on $R$, $\l$, $\d$ and $\O$. Applying the interior $L^p$ estimates and Schauder's estimates to $\bar{u}$ as before, there exists $M'_1=M'_1(R,\l,\d,n,M_2)>0$ such that
\begin{align}\label{w-grad estimate}
	|\nabla_z \bar{u}|+|\nabla^2_z \bar{u}|\leq M'_1, 
\end{align}
for $(z,\tau)\in B_r\times\left(T\left(1-\left(\frac r{r_0}\right)^2\right),T\left(1-\frac12\left(1-\frac r{r_0}\right)^2\right)\right)$, where we assume that $\frac12r_0^2<r^2<r_0^2$. Applying \eqref{w-transform} and taking $(z,\tau)=\left(0,\frac T2\right)$, \eqref{w-grad estimate} gives 
\[
	\mu^{-\frac13+1}|\nabla_xu|+\mu^{-\frac13+2}|\nabla^2_xu|\leq M'_1.
\]
Thus, \eqref{gradient} follows immediately from $\mu=\left[2\left(1-\frac tT\right)\right]^{\frac12}$.
\end{proof}

\subsection{Lower bound estimate}

First, we note the following local lower bound estimate.
\begin{proposition}\label{prop-lower estimate}
	Suppose $u(x,t)$ is a quenching solution of \eqref{F} in finite time $T$. Then, there exists a bounded constant $C=C(\l,\O)>0$ such that
\begin{align}\label{upper bound}
	\max_{x\in\O}u(x,t)\geq1-C(T-t)^{\frac13},
\end{align}
for $0<t<T$.
\end{proposition}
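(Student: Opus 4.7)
The plan is to track $m(t) := \max_{x \in \bar{\O}} u(x,t)$ and derive a first-order ODE inequality that can be integrated backward from $T$. The key observation is that the fringing term $\d|\nabla u|^2$ vanishes at any interior maximum of $u(\cdot,t)$, so the gradient correction drops out of the bound and we obtain exactly the same differential inequality as in the fringing-free case, which accounts for the $(3\l)^{1/3}$ constant appearing in \thm{thm-intro.asymptotics}.

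First I would observe that, since $u = 0$ on $\partial\O$ and the solution quenches at $t=T$, the maximum $m(t)$ is attained at some interior point $x^*(t) \in \O$ for all $t$ sufficiently close to $T$ (in fact, as soon as $m(t) > 0$). At any such interior maximizer $\nabla u(x^*(t),t) = 0$ and $\triangle u(x^*(t),t) \leq 0$, so evaluating \eqref{F} at $(x^*(t),t)$ yields
\begin{equation*}
u_t(x^*(t),t) \;=\; \triangle u(x^*(t),t) + \l\,\frac{1 + \d|\nabla u(x^*(t),t)|^2}{(1-u(x^*(t),t))^2} \;\leq\; \frac{\l}{(1-m(t))^2}.
\end{equation*}
Next, by Hamilton's trick, $m$ is locally Lipschitz on $[0,T)$ and at every point of differentiability one has $m'(t) = u_t(x^*(t),t)$ for any selection of maximizer, so the pointwise bound above upgrades to $m'(t) \leq \l/(1-m(t))^2$ almost everywhere on a left-neighborhood of $T$.

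Setting $y(t) = 1 - m(t) > 0$, the inequality becomes $-y'(t) \leq \l/y(t)^2$, equivalently $(y^3/3)' \geq -\l$ a.e. Integrating from $t$ to $T$ and using $\lim_{s\to T^-} y(s) = 0$ (which follows from the finite-time quenching), one obtains $y(t)^3 \leq 3\l(T-t)$, i.e.
\begin{equation*}
\max_{x\in\bar{\O}} u(x,t) \;=\; 1 - y(t) \;\geq\; 1 - (3\l)^{1/3}(T-t)^{1/3},
\end{equation*}
which gives the claim with $C = (3\l)^{1/3}$.

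The only technical point worth flagging is the justification of the a.e. differential inequality for $m(t)$: since $m$ need not be smooth, the cleanest route is to verify that $m$ is locally Lipschitz (using the Schauder bounds from \lem{lemma-gradient estimate} on any interior set where the quenching points lie, together with the fact that the quenching set is compact in $\O$ by \thm{compact}), invoke Rademacher's theorem, and apply Hamilton's trick at differentiability points. Alternatively, one can avoid Hamilton's trick altogether by comparing $u$ with the spatially constant ODE solution $\bar u(t)$ of $\bar u' = \l/(1-\bar u)^2$ on a small ball around a quenching point and using the parabolic comparison principle; this is the main conceptual step, and the rest is just the elementary integration above.
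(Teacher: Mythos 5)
Your proposal is correct and follows essentially the same route as the paper: evaluate the equation at an interior maximizer where $\nabla u=0$ (so the fringing term drops out) and $\triangle u\leq0$, establish that $U(t)=\max_x u(x,t)$ is locally Lipschitz so that the pointwise bound becomes the a.e.\ differential inequality $(1-U)^2U_t\leq\l$, and integrate from $t$ to $T$. The paper justifies the Lipschitz/differentiability point exactly by the two-sided difference-quotient comparison you allude to (rather than by invoking Schauder estimates or a separate comparison argument), and likewise obtains the constant $(3\l)^{1/3}$.
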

\begin{proof}
	Let $U(t)=\max_{x\in\O}u(x,t)$, $0<t<T$, and let $U(t_i)=u(x_i,t_i)$, $i=1,2$ with $h=t_2-t_1>0$. Then,
\begin{align*}
	U(t_2)-U(t_1)&\geq u(x_1,t_2)-u(x_1,t_1)=hu_t(x_1,t_1)+o(1);\\
	U(t_2)-U(t_1)&\leq u(x_2,t_2)-u(x_2,t_1)=hu_t(x_2,t_2)+o(1).
\end{align*}
It follows that $U(t)$ is Lipschitz continuous. Hence, for $t_2>t_1$, we have
\[
	\frac{U(t_2)-U(t_1)}{t_2-t_1}\leq u_t(x_2,t_2)+o(1).
\]
On the other hand, since $\nabla u(x_2,t_2)=0$ and $\triangle u(x_2,t_2)\leq0$, we obtain
\[
	u_t(x_2,t_2)\leq\frac\l{(1-u(x_2,t_2))^2}=\frac\l{(1-U(t_2))^2},
\]
for $0<t_2<T$. Consequently, at any differentiable point of $U(t)$, it deduces from the above inequalities that
\begin{align}\label{U}
	(1-U)^2U_t\leq\l,
\end{align}
for a.e. $0<t<T$. \eqref{upper bound} is obtained by integrating \eqref{U} from $t$ to $T$.
\end{proof}

\subsection{Nondegeneracy of quenching solution}

For the quenching solution $u(x,t)$ of \eqref{F} in finite time $T$, we now introduce the associated similarity variables
\begin{align}\label{ysw}
	y=\frac{x-a}{\sqrt{T-t}},\quad s=-\log{(T-t)},\quad u(x,t)=1-(T-t)^{\frac13}w_a(y,s),
\end{align}
$a$ is any point in $\O_\eta$, for some small $\eta>0$. The form of $w_a$ defined in \eqref{ysw} is motivated by Lemma \ref{lemma-upper bound} and Proposition \ref{prop-lower estimate}. Then $w_a(y,s)$ is defined in 
\[
	W_a:=\{(y,s):\,a+ye^{-\frac s2}\in\O,\ s>s'=-\log{T}\},
\]
and it solves 
\begin{equation}\label{w_a-eqn}
	\frac{\partial w_a}{\partial s}=\triangle w_a-\frac y2\cdot\nabla w_a+\frac13w_a-\frac\l{w_a^2}-\l\d e^{\frac s3}\frac{|\nabla w_a|^2}{w_a^2}.
\end{equation}
Here $w_a$ is always strictly positive in $W_a$. The slice of $W_a$ at a given time $s=s_0$ will be denoted as $\O_a(s_0)$:
\[
	\O_a(s_0):=W_a\cap\{s=s_0\}=e^{\frac{s_0}2}(\O-a).
\]
For any $a\in \O_\eta$, there exists $s_0=s_0(\eta,a)>0$ such that
\begin{equation}\label{eqn-Bs}
	B_s:=\{y:\,|y|<s\}\subset\O_a(s),
\end{equation}
for $s\geq s_0$. 

Equation \eqref{w_a-eqn} could also be written in divergence form:
\begin{align}\label{w-divergence form}
	\rho w_s=\nabla(\rho\cdot\nabla w)+\frac13\rho w-\frac{\l\rho}{w^2}-\l\d\rho e^{\frac s3}\frac{|\nabla w|^2}{w^2},
\end{align}
with $\rho(y)=e^{-\frac{|y|^2}4}$.

We shall reach the nondegeneracy of the quenching behavior. The conclusion is obtained by the comparison principle \cite{F} and results in \cite{Guo}.
\begin{theorem}\label{thm-nondengeneracy}
	Suppose $u$ is a quenching solution of \eqref{F} in finite time $T$ and $a$ is any point in $\O_\eta$, for some $\eta>0$. If $w_a(y,s)\rightarrow\infty$ as $s\rightarrow\infty$ uniformly for $|y|\leq C$, where $C$ is any positive constant, then $a$ is not a quenching point of $u$.
\end{theorem}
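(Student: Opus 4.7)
The plan is to argue by contradiction, reducing the statement to the corresponding nondegeneracy result for the fringing-free problem \eqref{eqn-without fringing} established in \cite{Guo}. Assume that $a$ is a quenching point of $u$, so that there exist sequences $(x_n,t_n)\in\O_T$ with $(x_n,t_n)\to(a,T)$ along which $u(x_n,t_n)\to 1$; equivalently, in the similarity variables this forces $(T-t_n)^{1/3}w_a(y_n,s_n)\to 0$ with $y_n=(x_n-a)/\sqrt{T-t_n}$ remaining bounded and $s_n=-\log(T-t_n)\to+\infty$. The hypothesis supplies only the uncontrolled divergence $w_a(y,s)\to\infty$ on compact sets of $y$, whereas a direct contradiction would require the exponential lower bound $w_a(0,s)\gtrsim e^{s/3}$. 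Bridging this gap is the content of the proof, and the bridge is constructed via comparison.

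I would let $\bar u$ denote the unique solution of \eqref{eqn-without fringing} with the same initial and boundary data as $u$. Since the right-hand side of \eqref{F} majorizes that of \eqref{eqn-without fringing} by the non-negative fringing term $\l\d|\nabla u|^2/(1-u)^2$, the parabolic comparison principle stated in \cite{F} yields $u\geq\bar u$ on $\O\times[0,T)$; in particular $\bar u<1$ throughout this strip, so $\bar u$ does not quench before time $T$. Defining $\bar w_a(y,s):=(T-t)^{-1/3}(1-\bar u(x,t))$ in the same similarity scaling as for $w_a$, one has $\bar w_a\geq w_a$ pointwise, and the hypothesis therefore transfers into $\bar w_a(y,s)\to\infty$ uniformly for $|y|\leq C$. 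The nondegeneracy theorem of \cite{Guo} applied to $\bar u$ then produces a parabolic neighborhood of $(a,T)$ on which $1-\bar u\geq\eta_0>0$. To promote this to a separation of $u$ itself from $1$ in spite of the wrong direction of the pointwise comparison $u\geq\bar u$, I would work in the Kirchhoff-type variable $v=\zeta_{\l,\d}(u)$ of Section~2: under this transformation \eqref{F} becomes the gradient-free equation \eqref{V}, the monotonicity of $\rho_{\l,\d}$ applies, and the lower bound on $1-\bar u$ together with the explicit inverse $u=\zeta_{\l,\d}^{-1}(v)$ yields the desired quantitative separation of $u$ from $1$ near $(a,T)$, contradicting $u(x_n,t_n)\to 1$.

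The step I expect to be the hardest is precisely this final transfer through the nonlinear change of variables, where the fringing correction must be controlled uniformly. The gradient estimate $|\nabla_yw_a|\leq M'$ from Lemma \ref{lemma-gradient estimate} is the decisive analytic input: it guarantees that the potentially disruptive fringing term $-\l\d e^{s/3}|\nabla w_a|^2/w_a^2$ in \eqref{w_a-eqn} is dominated by the coercive drift $w_a/3$ as soon as $w_a\gtrsim e^{s/6}$, which is what allows the similarity equation \eqref{w_a-eqn} to be compared in the limit $s\to\infty$ with the unperturbed similarity equation studied in \cite{Guo}, completing the argument.
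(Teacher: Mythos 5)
Your reduction --- comparing $u$ with the solution $\bar u$ of \eqref{eqn-without fringing}, transferring the divergence hypothesis from $w_a$ to $\bar w_a$ via $w_a\le\bar w_a$, and invoking the nondegeneracy theorem of \cite{Guo} --- is exactly the route the paper takes: there, $w_a$ is observed to be a subsolution of the fringing-free similarity equation, so $w_a\le\tilde w$ by comparison, $\tilde w\to\infty$, and Theorem~2.12 of \cite{Guo} is cited. You are right to notice that this comparison points the wrong way for the final conclusion: it yields a lower bound on $1-\bar u$, whereas the theorem concerns $1-u\le 1-\bar u$. The paper adds no step corresponding to your final ``transfer''; it treats the citation as completing the proof.

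The difficulty is that the transfer step you propose does not close this gap. First, the variable $v=\zeta_{\l,\d}(u)$ is built from $u$ alone; the bound $1-\bar u\ge\eta_0$ constrains $\bar u$, and since $u\ge\bar u$ it imposes no upper bound on $u$, hence none on $v$ --- passing to the gradient-free equation \eqref{V} changes nothing, because the only comparison function available is still on the wrong side. Second, your claimed domination of the fringing term $\l\d e^{s/3}|\nabla w_a|^2/w_a^2$ by the drift $\tfrac13 w_a$ requires $w_a^3\gtrsim e^{s/3}$, i.e.\ $w_a\gtrsim e^{s/9}$ (not $e^{s/6}$); more importantly, the hypothesis supplies only $w_a\to\infty$ with no rate, and an exponential lower bound on $w_a$ is precisely what a nondegeneracy argument must \emph{produce}, so assuming any power of $e^{s}$ as input is circular. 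What is genuinely needed at this point --- and what neither your proposal nor the paper's one-line citation supplies --- is a mechanism that propagates a lower bound for $w_a$ itself forward in $s$ despite the negative fringing term, for instance by exhibiting $w_a$ as a supersolution of a suitable auxiliary problem once it exceeds a large threshold.
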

\begin{proof}
	It is easy to see that $w_a$ in \eqref{w_a-eqn} is a subsolution of 
\[
	\frac{\partial}{\partial s}\tilde{w}=\triangle\tilde{w}-\frac y2\cdot\nabla\tilde{w}+\frac13\tilde{w}-\frac\l{\tilde{w}^2}
\]
in $B_{s_0}\times(s_0,\infty)$. From the comparison principle (cf. \cite{F}), we get $w_a\leq\tilde{w}$ in $B_{s_0}\times(s_0,\infty)$. If $w_a(y,s)\rightarrow\infty$, as $s\rightarrow\infty$ uniformly in $|y|\leq C$, so does $\tilde{w}(y,s)$. Our conclusion follows immediately from Theorem 2.12, \cite{Guo}, where $f\equiv1$ and $\tilde{w}$ is the $w_a$ in \cite{Guo}.
\end{proof}
\begin{remark}
	The proof of Theorem \ref{thm-nondengeneracy} also implies that the quenching set of the solution to ($F_{\l,0}$) is a subset of that of $u$, the solution to \eqref{F}, $\d>0$. 
\end{remark}

\subsection{Asymptotics of quenching solution} In this subsection, we shall omit all the subscription $a$ of $w_a$, $W_a$ and $\O_a$ if no confusion will arise.

In view of \eqref{ysw}, one combine Lemma \ref{lemma-upper bound} and Lemma \ref{lemma-gradient estimate} to reach the following estimates on $w$, $\nabla w$ and $\triangle w$:
\begin{corollary}\label{coro-estimate on w}
	Suppose $u$ is a quenching solution to \eqref{F} in finite time $T$. Then the rescaled solution $w$ satisfies
\[
	M\leq w\leq e^{\frac s3},\quad |\nabla w|+|\triangle w|\leq M',\quad\textup{in}\ W, 
\]
where $M$ and $M'$ are constants in Lemma \ref{lemma-upper bound} and Lemma \ref{lemma-gradient estimate}, respectively. Moreover, it satisfies 
\begin{equation}\label{estimate on w}	
M\leq w(y_1,s)\leq w(y_2,s)+M'|y_1-y_2|,
\end{equation}
for any $(y_i,s)\in W$, $i=1,2$.
\end{corollary}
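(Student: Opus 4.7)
The proof is essentially a change-of-variables exercise: translate the estimates of \lem{lemma-upper bound} and \lem{lemma-gradient estimate} through the similarity transformation \eqref{ysw}.

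First I would derive the lower bound. Since $w_a(y,s)=(1-u(x,t))(T-t)^{-1/3}$, the inequality $1-u(x,t)\geq M(T-t)^{1/3}$ from \lem{lemma-upper bound} gives $w\geq M$ immediately on all of $W$. For the upper bound $w\leq e^{s/3}$, I would simply use that $u<1$ throughout $\O_T$ (the quenching time is the first instant where $u$ reaches $1$), so $1-u<1$, and $(T-t)^{-1/3}=e^{s/3}$ by the definition $s=-\log(T-t)$. No regularity is used here.

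Next I would handle the derivative bounds. Differentiating \eqref{ysw} via the chain rule with $x=a+ye^{-s/2}$ and $t=T-e^{-s}$ yields
\begin{align*}
\nabla_y w_a(y,s) &= -(T-t)^{1/6}\,\nabla_x u(x,t),\\
\triangle_y w_a(y,s) &= -(T-t)^{2/3}\,\triangle_x u(x,t).
\end{align*}
Fix $a\in\O_\eta$; by \lem{lemma-gradient estimate} with $m=1,2$ one has $|\nabla_x u|(T-t)^{1/6}\leq M'$ and $|\triangle_x u|(T-t)^{2/3}\leq M'$ on $Q_R$ for any admissible $R$, so $|\nabla_y w_a|+|\triangle_y w_a|\leq 2M'$ on the image of $Q_R$ in similarity coordinates. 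The image of these cylinders as $R$ ranges covers $W$ for $s$ sufficiently large; for any bounded range $s\in[s',s_1]$ we are away from the quenching time, so classical parabolic regularity for \eqref{F} on compact subsets of $\O_T$ gives bounds on $\nabla_x u$ and $\triangle_x u$, which transfer to $\nabla_y w$ and $\triangle_y w$ via the same identities (with bounded multiplicative factors $(T-t)^{1/6},(T-t)^{2/3}$). Absorbing all constants into a single $M'$ gives the claimed estimate on all of $W$.

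Finally, the Lipschitz-type inequality \eqref{estimate on w} follows from the lower bound $w\geq M$ and the mean value inequality applied along the straight segment joining $y_2$ and $y_1$ (which lies inside $\O_a(s)$ since this slice is convex whenever $\O$ is, and in any case \lem{lemma-gradient estimate} is a local estimate along any such segment for $s$ large). Integrating $|\nabla_y w|\leq M'$ yields $w(y_1,s)\leq w(y_2,s)+M'|y_1-y_2|$. The main (minor) subtlety to address carefully is the compatibility between the local form of \lem{lemma-gradient estimate} (valid on $Q_R$ with $a+R\in\O_\eta$) and the desired global bound on $W_a$; this is handled by splitting $s$ into a bounded initial interval (where classical regularity suffices) and a tail $s\geq s_0$ where the rescaled cylinders cover $W$, then taking the maximum of the resulting constants.
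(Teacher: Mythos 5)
Your argument is correct and is essentially the paper's own (the paper offers no separate proof, merely stating that the corollary follows by combining Lemma \ref{lemma-upper bound} and Lemma \ref{lemma-gradient estimate} through the similarity change of variables \eqref{ysw}, and your careful handling of the covering of $W$ by the rescaled cylinders plus classical regularity away from the quenching time is exactly the intended, if unstated, bookkeeping). One trivial slip: the upper bound $1-u\leq 1$, hence $w\leq e^{s/3}$, follows from $u\geq 0$ (maximum principle, since $u(\cdot,0)=0$ and $u_t>0$), not from $u<1$ as you wrote --- $u<1$ only gives $1-u>0$.
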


\begin{lemma}\label{lemma-w_infty}
	Let $s_j$ be an increasing sequence such that $s_j\rightarrow+\infty$, and $w(y, s+s_j)$ is uniformly convergent to a limit $w_\infty(y,s)$ in compact sets. Then either $w_\infty(y,s)\equiv\infty$ or $w_\infty(y,s)<\infty$ in $\mathbb{R}^n$.
\end{lemma}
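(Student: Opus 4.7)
The plan is to read the dichotomy off directly from the uniform spatial Lipschitz estimate on $w$ that was recorded in Corollary \ref{coro-estimate on w}. The key point is that the Lipschitz constant $M'$ coming from Lemma \ref{lemma-gradient estimate} is independent of the time shift $s_j$, so the bound
\[
M \;\leq\; w(y_1,s')\;\leq\; w(y_2,s')+M'|y_1-y_2|
\]
holds for every $s'\geq s_0$ and every $y_1,y_2\in\Omega_a(s')=e^{s'/2}(\Omega-a)$, with the same constants $M,M'$. Since $a\in\Omega_\eta$, the domains $\Omega_a(s+s_j)$ exhaust $\R^n$ as $j\to\infty$, so for any fixed pair $y_1,y_2\in\R^n$ the inequality above with $s'=s+s_j$ is valid for all sufficiently large $j$.

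I would then proceed by contradiction of the dichotomy. Fix $s$ and assume that $w_\infty(y_0,s)<\infty$ at some $y_0\in\R^n$. For an arbitrary $y\in\R^n$, apply the Lipschitz bound in the form
\[
w(y,s+s_j)\;\leq\; w(y_0,s+s_j)+M'|y-y_0|
\]
and pass to the limit in $j$; since the right-hand side converges to $w_\infty(y_0,s)+M'|y-y_0|<\infty$, we obtain $w_\infty(y,s)<\infty$. Conversely, if $w_\infty(y_0,s)=\infty$, use the symmetric inequality
\[
w(y,s+s_j)\;\geq\; w(y_0,s+s_j)-M'|y-y_0|,
\]
whose right-hand side tends to $+\infty$; hence $w(y,s+s_j)\to\infty$, i.e.\ $w_\infty(y,s)=\infty$. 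Combining the two cases gives that, at every fixed $s$, $w_\infty(\cdot,s)$ is either identically $+\infty$ or finite on all of $\R^n$.

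There is essentially no analytic obstacle: the whole argument reduces to propagating a pointwise finiteness/infiniteness through the uniform-in-$s$ Lipschitz bound. The only things that need to be justified carefully are (i) that the gradient estimate of Lemma \ref{lemma-gradient estimate} indeed transfers to an $s$-independent bound $|\nabla_y w|\leq M'$ after the similarity rescaling \eqref{ysw} (which it does, since the factor $(T-t)^{1/6}$ produced by the change of variables is exactly what the gradient estimate controls), and (ii) that the hypothesis of uniform convergence on compact sets is compatible with an $\infty$ limit, meaning that on every compact $K$ either $\max_K w(\cdot,s+s_j)$ stays bounded or $\min_K w(\cdot,s+s_j)\to\infty$. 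The latter is itself a direct consequence of the same Lipschitz bound and so fits naturally into the argument.
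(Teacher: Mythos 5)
Your proof is correct and follows exactly the paper's route: the paper also deduces the dichotomy by passing the uniform Lipschitz estimate of Corollary \ref{coro-estimate on w} to the limit, obtaining $w_\infty(y_1,s)\leq w_\infty(y_2,s)+M'|y_1-y_2|$, from which the conclusion is immediate. You simply spell out the two directions (finiteness and infiniteness propagating between points) that the paper leaves implicit.
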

\begin{proof}
Inequality \eqref{estimate on w} implies that 
\[
	w_\infty(y_1,s)\leq w_\infty(y_2,s)+M'|y_1-y_2|
\]
and the conclusion follows.
\end{proof}

\begin{proposition}\label{prop-asymptotics}
	Suppose $w$ is the solution of \eqref{w_a-eqn} quenching at $x=a$ in finite time $T$. Assume further that 
\begin{align}\label{condition}
	\int_{s_0}^\infty se^{\frac s3}\int_{B_s}\rho|\nabla w|^2dyds<\infty,
\end{align}
for some $s_0\gg1$, where $\rho(y)=e^{-\frac{|y|^2}4}$, $B_s$ is defined in \eqref{eqn-Bs}. Then $w(y,s)\rightarrow w_\infty(y)$, as $s\rightarrow\infty$ uniformly on $|y|\leq C$, where $C>0$ is any bounded constant, and $w_\infty(y)$ is a bounded positive solution of
\begin{equation}\label{stationary w-eqn}
	\triangle w-\frac12y\cdot\nabla w+\frac13w-\frac\l{w^2}=0
\end{equation}
in $\mathbb{R}^n$.
\end{proposition}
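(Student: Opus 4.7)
The plan is to adapt the Giga--Kohn Lyapunov approach (see \cite{GK}, \cite{Guo}) to the rescaled equation \eqref{w_a-eqn}, treating the fringing term $\lambda\delta e^{s/3}|\nabla w|^2/w^2$ as a perturbation that hypothesis \eqref{condition} renders integrable. The natural weighted energy is
$$E[w](s) = \int_{\Omega_a(s)} \left(\frac{1}{2}|\nabla w|^2 - \frac{1}{6}w^2 - \frac{\lambda}{w}\right)\rho(y)\,dy,$$
whose critical points with respect to the weight $\rho$ are exactly the positive solutions of \eqref{stationary w-eqn}. Multiplying the divergence form \eqref{w-divergence form} by $w_s$ and integrating by parts yields
$$\frac{dE}{ds} = -\int \rho\, w_s^2\,dy \;-\; \lambda\delta\, e^{s/3}\int \frac{\rho\,|\nabla w|^2}{w^2}\,w_s\,dy.$$

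The first task is to absorb the fringing error. Using $w\geq M$ and $|\nabla w|\leq M'$ from Corollary~\ref{coro-estimate on w} together with Cauchy--Schwarz and Young's inequality (extracting one factor $|\nabla w|\leq M'$), the extra term is controlled by
$$\lambda\delta\, e^{s/3}\left|\int \frac{\rho |\nabla w|^2}{w^2}w_s\,dy\right| \leq \frac{1}{2}\int \rho w_s^2\,dy + C\, e^{s/3}\int\rho|\nabla w|^2\,dy,$$
after a careful interpolation in which the factor $s$ in \eqref{condition} compensates the remaining growth, so that the right-hand forcing is integrable over $[s_0,\infty)$. Combined with a uniform lower bound on $E[w](s)$ --- obtained from $w\geq M$ together with the Lipschitz estimate \eqref{estimate on w} and Proposition~\ref{prop-lower estimate}, which control $\int\rho(w^2+w^{-1})\,dy$ uniformly in $s$ --- integration of the Lyapunov inequality yields both
$$\int_{s_0}^\infty \int \rho\, w_s^2\,dy\,ds < \infty \quad\text{and}\quad E[w](s)\to E_\infty \text{ as } s\to\infty.$$

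For the compactness and limit-identification step I would fix any sequence $s_j\to+\infty$ and consider the translates $w_j(y,s):=w(y,s+s_j)$. Corollary~\ref{coro-estimate on w}, the equation \eqref{w_a-eqn}, and standard interior $L^p$ and Schauder estimates give uniform $C^{2,1}$ bounds for $w_j$ on $\{|y|\leq R,|s|\leq 1\}$ for every $R$, so a diagonal subsequence converges in $C^{2,1}_{\mathrm{loc}}$ to some $w_\infty(y,s)\geq M$. Integrability of $\int \rho w_s^2\,dy\,ds$ forces $\partial_s w_\infty\equiv 0$, and hypothesis \eqref{condition} forces $e^{s/3}\int\rho|\nabla w|^2\,dy\to 0$ along a subsequence, killing the fringing coefficient in the limit; hence $w_\infty=w_\infty(y)$ satisfies \eqref{stationary w-eqn}. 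The alternative $w_\infty\equiv\infty$ of Lemma~\ref{lemma-w_infty} is ruled out because $a$ is a quenching point and Theorem~\ref{thm-nondengeneracy} would otherwise produce a contradiction, so $w_\infty$ is bounded and positive.

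The main obstacle is uniqueness of $w_\infty$: a priori different sequences $s_j$ could yield different subsequential limits. I would close this either via a Lojasiewicz--Simon-type inequality for $E$ near its critical set, using that $E[w](s)\to E_\infty$ restricts all subsequential limits to a common energy level and promotes subsequential convergence to full convergence, or, in the geometric settings where the proposition is subsequently applied (one-dimensional or radially symmetric bounded domains), by appealing to uniqueness of positive bounded solutions of \eqref{stationary w-eqn} on $\mathbb{R}^n$, which forces $w_\infty\equiv(3\lambda)^{1/3}$. This uniqueness step is the delicate point because the fringing term disrupts the pure gradient-flow structure of $E$, and the strong integrability encoded in \eqref{condition} is precisely what must be leveraged to repair it.
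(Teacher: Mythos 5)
Your overall strategy is the one the paper uses: the Giga--Kohn weighted energy, its (almost) monotonicity under \eqref{w-divergence form}, condition \eqref{condition} to make the fringing contribution integrable, Arzela--Ascoli plus Schauder for compactness, Theorem \ref{thm-nondengeneracy} to exclude $w_\infty\equiv\infty$, and classification of stationary solutions to pin down the limit (which the paper defers to Theorem \ref{thm-limit of w}). Two points, one of which is a genuine gap.

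The gap is in your absorption of the fringing term. Whichever way you split
$\l\d e^{\frac s3}\int\rho\frac{|\nabla w|^2}{w^2}w_s\,dy$
by Cauchy--Schwarz/Young so as to absorb a multiple of $\int\rho w_s^2\,dy$ into the dissipation, the complementary term carries the \emph{square} of the prefactor $e^{\frac s3}$: you end up with $C e^{\frac{2s}3}\int\rho|\nabla w|^2dy$ (extracting $|\nabla w|\le M'$ does not help, because to keep the $w_s^2$ coefficient bounded you must take $\e\sim e^{-\frac s3}$, which puts $e^{\frac s3}$ back into the other term). Since $e^{\frac{2s}3}/(se^{\frac s3})\to\infty$, hypothesis \eqref{condition} does not control this, and no "interpolation using the factor $s$" closes it. The paper avoids Young entirely: it first proves the pointwise bound $|w_s|\le C(1+|y|)+\frac w3\le\tilde C(1+s)$ on $B_s$ (see \eqref{estimate of w_s}), and then bounds the fringing term \emph{linearly} by $C\,se^{\frac s3}\int_{B_s}\rho|\nabla w|^2dy$, which is exactly the integrand in \eqref{condition} --- indeed this is why the hypothesis is stated with the weight $se^{\frac s3}$ rather than $e^{\frac s3}$. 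You need this pointwise estimate on $w_s$ (or an equivalent linear bound) for the argument to go through.

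Two smaller remarks. First, your energy is taken over the $s$-dependent region $\O_a(s)$ (the paper uses $B_s$), so differentiating in $s$ produces boundary terms; the paper collects these into $G(s)$ and shows $G(s)\lesssim s^ne^{-s^2/4}$, again using $|w_s|\lesssim 1+s$. Second, rather than proving a uniform lower bound for $E$, the paper integrates the Lyapunov inequality over $[m+s_j,\,m+s_{j+1}]$ and uses the telescoping differences $E[w_j](m)-E[w_{j+1}](m)\to0$ (dominated convergence along the convergent subsequence); your lower-bound route is plausible but requires a uniform bound on $\int\rho w^2dy$ that you have not actually supplied. On the uniqueness of the subsequential limit you are right to be cautious; the paper resolves it exactly as in your second alternative, by the classification results of \cite{FH} and \cite{GuoCJM}.
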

\begin{proof}
	Let us adapt the arguments in the proofs of Proposition 6 and 7 \cite{GK} or Lemma 3.1 \cite{Guo}. Let $\{s_j\}$ be an increasing sequence tending to $\infty$ and $s_{j+1}-s_j\rightarrow\infty$. Let us denote $w_j(y,s)=w(y,s+s_j)$. Applying Arzela-Ascoli theorem on $z_j(y,s)=\frac1{w_j(y,s)}$ with Corollary \ref{coro-estimate on w}, there is a subsequence of $\{z_j\}$, still denoted as $\{z_j\}$, such that
\[
	z_j(y,s)\rightarrow z_\infty(y,s)
\]
uniformly on compact sets of $W$ and
\[
	\nabla z_j(y,m)\rightarrow \nabla z_\infty(y,m)
\]
for almost all $y$ and for each integer $m$. That is, $w_j(y,s)\rightarrow w_\infty(y,s)$ uniformly on the compact sets of $W$ and $\nabla w_j(y,m)\rightarrow\nabla w_\infty(y,m)$ for almost all $y$ and for each integer $m$. From Lemma \ref{lemma-w_infty}, we get that either $w_\infty(y,s)\equiv\infty$ or $w_\infty(y,s)<\infty$ in $y\in\mathbb{R}^n$. The case that $w_\infty(y,s)\equiv\infty$ could be excluded by Theorem \ref{thm-nondengeneracy}, since $a$ is the quenching point.

Let us define the associate energy of $w$ at time $s$:
\[
	E[w](s)=\frac12\int_{B_s}\rho|\nabla w|^2dy-\frac16\int_{B_s}\rho w^2dy-\l\int_{B_s}\frac\rho wdy.
\]
Direct computations yield that
\begin{align}\label{dE/ds}\notag
	\frac d{ds}E[w](s)=&\int_{B_s}\rho\nabla w\cdot\nabla w_sdy-\frac13\int_{B_s}\rho ww_sdy+\l\int_{B_s}\frac\rho{w^2}w_sdy\\\notag
		&+\frac1{2}\int_{\partial B_s}\rho|\nabla w|^2(y\cdot\nu)dS-\frac1{6}\int_{\partial B_s}\rho w^2(y\cdot\nu)dS-\l\int_{\partial B_s}\frac\rho w(y\cdot\nu)dS\\\notag
	=&-\int_{B_s}\nabla(\rho\cdot\nabla w)w_sdy-\frac13\int_{B_s}\rho ww_sdy+\l\int_{B_s}\frac\rho{w^2}w_sdy\\\notag
		&+\int_{\partial B_s}\rho(\nabla w\cdot\nu)w_sdS+\frac1{2}\int_{\partial B_s}\rho|\nabla w|^2(y\cdot\nu)dS\\\notag
		&-\frac1{6}\int_{\partial B_s}\rho w^2(y\cdot\nu)dS-\l \int_{\partial B_s}\frac\rho w(y\cdot\nu)dS\\
	=&-\int_{B_s}\rho|w_s|^2dy-\l\d e^{\frac s3}\int_{B_s}\rho\frac{|\nabla w|^2}{w^2}w_sdy+G(s),
\end{align}
where 
\begin{align*}
	G(s):=&\int_{\partial B_s}\rho(\nabla w\cdot\nu)w_sdS+\frac12\int_{\partial B_s}\rho|\nabla w|^2(y\cdot\nu)dS\\
		&-\frac16\int_{\partial B_s}\rho w^2(y\cdot\nu)dS-\l \int_{\partial B_s}\frac\rho w(y\cdot\nu)dS,
\end{align*}
$\nu$ is the exterior unit normal vector to $\partial\O$ and $dS$ is the surface area element. The first equality in \eqref{dE/ds} is followed by Lemma 2.3 \cite{Liu}. Let us estimate $G(s)$ as in Lemma 2.10 \cite{Guo}:
\begin{align}\label{estimate of G}\notag
	G(s)\leq&\int_{\partial B_s}\rho(\nabla w\cdot\nu)w_sdS+\frac12\int_{\partial B_s}\rho|\nabla w|^2(y\cdot\nu)dS\\
		\leq&C_1s^ne^{-\frac{s^2}4}+C_2s^{n-1}e^{-\frac{s^2}4}\lesssim s^ne^{-\frac{s^2}4},
\end{align}
since 
\begin{align}\label{estimate of w_s}
	|w_s|\leq C(1+|y|)+\frac w3\leq\tilde{C}(1+s),
\end{align}
due to Lemma \ref{lemma-w_infty} and the fact that $a$ is the quenching point. Hence, by integrating \eqref{dE/ds} in time from $a$ to $b$, we have that 
\begin{align}\label{energy estimate in both space and time}\notag
	\int_a^b\int_{B_s}\rho|w_s|^2dyds\leq& E[w](a)-E[w](b)\\
		&+C\int_a^b se^{\frac s3}\int_{B_s}\rho|\nabla w|^2dyds+\tilde{C}\int_a^bG(s)ds
\end{align}
for any $a<b$. Now we shall show that $w_\infty$ is independent of $s$. Let $a=m+s_j$, $b=m+s_{j+1}$ and $w=w_j$ in \eqref{energy estimate in both space and time}:
\begin{align}\label{integration on m+s_j to m+s_j+1}\notag
	&\int_m^{m+s_{j+1}-s_j}\int_{B_{s+s_j}}\rho\left|\frac{\partial w_j}{\partial s}\right|^2dyds\\
		\leq&E[w_j](m)-E[w_{j+1}](m)+C\int_{m+s_j}^{m+s_{j+1}}se^{\frac s3}\rho|\nabla w|^2dyds+\tilde{C}\int_{m+s_j}^{m+s_{j+1}}G(s)ds
\end{align}
for any integer $m$. Since $s_j+m\rightarrow \infty$ as $j\rightarrow\infty$, the third and the last term on the right-hand side of \eqref{integration on m+s_j to m+s_j+1} tend to zero, due to \eqref{condition} and \eqref{estimate of G}, respectively. Since $\nabla w_j(y,m)$ is bounded and indepdent of $j$, and $\nabla w_j(y,m)\rightarrow\nabla w_\infty(y,s)$ a.e. as $j\rightarrow\infty$, we have
\begin{align}
	\lim_{j\rightarrow\infty}E[w_j](m)=\lim_{j\rightarrow\infty}E[w_{j+1}](m):=E[w_\infty],
\end{align}
according to the dominated convergence theorem. Thus, the right-hand side of \eqref{integration on m+s_j to m+s_j+1} tends to zero as $j\rightarrow\infty$. Therefore
\begin{align}\label{integral}
	\lim_{j\rightarrow\infty}\int_m^M\int_{B_{s+s_j}}\rho\left|\frac{\partial w_j}{\partial s}\right|^2dyds=0
\end{align}
for each pair of $m$ and $M$. Now, from \eqref{estimate of w_s} where $\tilde{C}$ is independent of $j$, we get $\frac{\partial w_j}{\partial s}$ converges weakly to $\frac{\partial w_\infty}{\partial s}$. Since $\rho$ decreases expeonentially as $|y|\rightarrow\infty$ the integral in  \eqref{integral} is lower-semicontinuous, and we conclude that 
\[
	\int_m^M\int_{\mathbb{R}^n}\left|\frac{\partial w_\infty}{\partial s}\right|^2dyds=0.
\]
Since $m$ and $M$ are arbitrary, we show that $w_\infty$ is indepedent of $s$.

Since $\left|\frac{\partial w_j}{\partial s}\right|$ and $\nabla w_j$ are locally bounded in $\mathbb{R}^n\times(s_0,\infty)$ for some $s_0\gg1$, by Corollary \ref{coro-estimate on w}, $w_\infty$ is locally Lipschitzian. Each $w_j$ solves \eqref{w_a-eqn} and condition \eqref{condition} forces $e^s|\nabla w|^2\rightarrow0$, as $s\rightarrow+\infty$, so $w_\infty$ is a stationary weak solution to \eqref{stationary w-eqn}. Schauder's estimates (cf. \cite{F}) yields the desired regularity of $w_\infty$, i.e. $w_\infty$ is acctually a strong solution. 
\end{proof}

The solution to \eqref{stationary w-eqn} in one dimension has been investigated in \cite{FH}. And \cite{GuoCJM} studied the radially symmetric solution to this equation of dimension $n\geq2$. Combining Proposition \ref{prop-asymptotics} and their results, we assert that 
\begin{theorem}\label{thm-limit of w}
	Suppose $u$ is a solution to \eqref{F} quenching at $x=a$ in finite time $T$. Assume further that condition \eqref{condition} is satisfied. Then we have
	\[
		\lim_{t\rightarrow T^-}(1-u(x,t))(T-t)^{-\frac13}=(3\l)^{\frac13}
	\] 
uniformly on $|x-a|\leq C\sqrt{T-t}$ for any bounded constant $C$.
\end{theorem}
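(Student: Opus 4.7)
The plan is to reduce the theorem directly to the classification of bounded positive stationary solutions of \eqref{stationary w-eqn}, using Proposition~\ref{prop-asymptotics} as the analytic engine and then translating the conclusion back through the similarity transformation \eqref{ysw}. First I would observe that the constant $w^{\ast}(y)\equiv(3\l)^{1/3}$ trivially solves \eqref{stationary w-eqn}, since $(w^{\ast})^{3}=3\l$ forces $\tfrac{1}{3}w^{\ast}-\l/(w^{\ast})^{2}=0$ while the transport and diffusion terms vanish. So the content of the theorem is that this constant is in fact the \emph{only} possible limit profile.

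Next I would apply Proposition~\ref{prop-asymptotics}: under hypothesis \eqref{condition} we already have $w_{a}(y,s)\to w_{\infty}(y)$ as $s\to\infty$, uniformly on every ball $\{|y|\le C\}$, with $w_{\infty}$ a bounded positive solution of \eqref{stationary w-eqn} on $\mathbb{R}^{n}$. To identify $w_{\infty}$ with $(3\l)^{1/3}$ I would split into the two cases covered by the hypotheses of Theorem~\ref{thm-intro.asymptotics}. For $\O\subset\mathbb{R}$, the one-dimensional classification of bounded positive entire solutions of \eqref{stationary w-eqn} in \cite{FH} forces $w_{\infty}\equiv(3\l)^{1/3}$ directly. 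For $\O=B_{R}\subset\mathbb{R}^{n}$ with $n\ge2$, I would first invoke the Gidas--Ni--Nirenberg symmetry result \cite{GNN} (already used earlier in Section~4) to conclude that the quenching solution $u(\cdot,t)$ is radially symmetric about the origin, so that $w_{a}$ at the unique quenching point $a=0$ is radial, and hence so is the uniform limit $w_{\infty}$; the radial classification in \cite{GuoCJM} then again gives $w_{\infty}\equiv(3\l)^{1/3}$.

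Finally I would translate back via \eqref{ysw}: for any $x$ with $|x-a|\le C\sqrt{T-t}$, setting $y=(x-a)/\sqrt{T-t}$ and $s=-\log(T-t)$ yields $|y|\le C$ and
\[
(1-u(x,t))(T-t)^{-1/3}=w_{a}(y,s).
\]
Uniform convergence of $w_{a}(\cdot,s)$ to $w_{\infty}\equiv(3\l)^{1/3}$ on $\{|y|\le C\}$ as $s\to\infty$ (i.e.\ as $t\to T^{-}$) delivers the stated limit uniformly on $|x-a|\le C\sqrt{T-t}$.

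The step I expect to be the main obstacle is precisely the uniqueness/classification of $w_{\infty}$; everything else is essentially bookkeeping on top of Proposition~\ref{prop-asymptotics}. In the radial setting this uniqueness depends on verifying that the limit $w_{\infty}$ inherits radial symmetry from $w_{a}$ (which, in turn, relies on Theorem~\ref{quench at origin} to pin the quenching point at the origin so that $w_{a}$ is itself radial), after which the cited results of \cite{FH} and \cite{GuoCJM} can be applied verbatim. A careful writeup would make this symmetry-inheritance step explicit and record that condition \eqref{condition} guarantees the $\l\d\,e^{s/3}|\nabla w|^{2}/w^{2}$ term in \eqref{w_a-eqn} drops out in the limit, so the equation satisfied by $w_{\infty}$ is exactly the $\d=0$ stationary equation \eqref{stationary w-eqn} to which those classifications apply.
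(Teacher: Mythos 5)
Your proposal is correct and follows essentially the same route as the paper: invoke Proposition~\ref{prop-asymptotics} to obtain a bounded positive stationary limit $w_\infty$ of \eqref{stationary w-eqn}, rule out non-constant profiles via the classifications in \cite{FH} ($n=1$) and \cite{GuoCJM} (radial, $n\ge2$), and translate back through \eqref{ysw}. You are in fact slightly more careful than the paper in flagging that the $n\ge2$ case needs $w_\infty$ to inherit radial symmetry before \cite{GuoCJM} applies, a point the paper's two-line proof leaves implicit.
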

\begin{proof}
	It is shown in Theorem 2.1, \cite{FH} and Theorem 1.6, \cite{GuoCJM} that every non-constant (radially symmetric in dimension $n\geq2$) solution $w(y)$  to \eqref{stationary w-eqn} in $\mathbb{R}^n$ must be strictly increasing for sufficiently large $|y|$, and $w(y)\rightarrow\infty$, as $|y|\rightarrow\infty$. Therefore, $w_\infty$ has to be a constant solution, i.e. $w_\infty\equiv(3\l)^{\frac13}$.
\end{proof}

\subsection{Local expansion near the singularity}

In this subsection, we shall construct the local expansion of the solution $u=u(x,t)$ near the quenching point and the quenching time, provided $\O\in\mathbb{R}^n$ is a radially symmetric domain. It has been shown in Theorem \ref{quench at origin} that the origin is the only quenching point. Let us make the following nonlinear transformation as motivated by \cite{KL} and \cite{GPW}:
\begin{equation}\label{eqn-nonlinear mapping}
	\z = \frac1{3\l}(1-u)^3.
\end{equation}
Notice that $u=1$ maps to $\z=0$. In terms of $\z$, \eqref{F} transforms to
\begin{equation}\label{eqn-z}
	\left\{\begin{aligned}
		\z_t=&\triangle \z-\frac23\frac{|\nabla\z|^2}{\z}-\frac{\d\l^{\frac23}}{3^{\frac43}}\frac{|\nabla\z|^2}{\z^{\frac43}}-1,\quad(x,t)\in\O_T,\\
		\z(x,t)=&\frac1{3\l},\quad (x,t)\in\partial\O_T,\\
		\z(x,0)=&\frac1{3\l},\quad x\in\O.
	\end{aligned}\right.
\end{equation}
We shall find a formal power series solution to \eqref{eqn-z} near $\z=0$. As in \cite{KL} and \cite{GPW} we look for a locally radially symmetric solution to \eqref{eqn-z} in the form
\begin{equation}\label{eqn-z.expansion}
	\z(r,t)=\z_0(t)+\frac{r^2}{2!}\z_2(t)+\frac{r^4}{4!}\z_4(t)+\cdots,
\end{equation}
where $r=|x|$. Substituting \eqref{eqn-z.expansion} into \eqref{eqn-z} and collecting the coefficients in $r$, we obtain the following coupled ODEs for $\z_0$ and $\z_2$:
\begin{align}\label{eqn-z0z2}
	\z_0'=-1+n\z_2,\quad\z_2'=\frac{n+2}3\z_4-\frac43\frac{\z_2^2}{\z_0}-\frac{2\d\l^{\frac23}}{3^{\frac43}}\frac{\z_2^2}{\z_0^{\frac43}}.
\end{align}
We are interested in the solution with $\z_0(T)=0$, $\z_0'<0$ and $\z_2<0$ for $T-t>0$ and $T-t\ll1$. We shall assume that $\z_4\ll\frac{\z_2^2}{\z_0^{\frac43}}$ near the singularity. And it is clear that $\frac{\z_2^2}{\z_0}\ll\frac{\z_2^2}{\z_0^{\frac43}}$, since $\z_0\ll1$. Hence, \eqref{eqn-z0z2} reduces to 
\begin{align}\label{eqn-z0z2.1}
	\z_0'=-1+n\z_2,\quad\z_2'=-\frac{2\d\l^{\frac23}}{3^{\frac43}}\frac{\z_2^2}{\z_0^{\frac43}}.
\end{align}
Now we solve the system \eqref{eqn-z0z2.1} asymptotically as $t\rightarrow T^-$. We first assume that $n\z_2\ll1$ near $T$. This leads to $\z_0\sim T-t$ and the following differential equation for $\z_2$:
\begin{equation}\label{eqn-z2}
	\z_2'\sim-\frac{2\d\l^{\frac23}}{3^{\frac43}}\frac{\z_2^2}{(T-t)^{\frac43}}.
\end{equation}
By integrating \eqref{eqn-z2}, we obtain that
\begin{equation}\label{eqn-z2.1}
	\z_2\sim\frac{3^{\frac13}}{2\d\l^{\frac23}}(T-t)^{\frac13}+A\frac{(T-t)^{\frac13}}{\log{(T-t)}}+\cdots,
\end{equation}
for some unknown constant $A$. From \eqref{eqn-z2.1}, we observe that the consistency condition that $n\z_2\ll1$ as $t\rightarrow T^-$ is indeed satisfied. Substitute \eqref{eqn-z2.1} into \eqref{eqn-z0z2.1} for $\z_0$, we obtain for $t\rightarrow T^-$ that
\begin{equation}\label{eqn-z0}
	\z_0'\sim-1+n\left(\frac{3^{\frac13}}{2\d\l^{\frac23}}(T-t)^{\frac13}+A\frac{(T-t)^{\frac13}}{\log{(T-t)}}+\cdots\right).
\end{equation}
Using the method of dominant balance, we look for a solution to \eqref{eqn-z0} as $t\rightarrow T^-$ in the form
\[
	\z_0\sim(T-t)+(T-t)\left(B_0(T-t)^{\frac13}+B_1\frac{(T-t)^{\frac13}}{\log{(T-t)}}+\cdots\right),
\]
for some constants $B_0$ and $B_1$. A simple calculation yields that
\begin{equation}\label{eqn-z0.1}
	\z_0\sim(T-t)+(T-t)\left[-\frac{3^{\frac43}n}{8\d\l^{\frac23}}(T-t)^{\frac13}-\frac34nA\frac{(T-t)^{\frac13}}{\log{(T-t)}}+\cdots\right],\quad\textup{as}\ t\rightarrow T^-.
\end{equation}
The local form for $\z$ near quenching point is $\z\sim\z_0+\frac{r^2}2\z_2$. Using the leading term in $\z_2$ from \eqref{eqn-z2.1} and the first two terms in $\z_0$ from \eqref{eqn-z0.1}, we obtain the local form 
\begin{equation}\label{eqn-z.asymptotic}
	\z\sim(T-t)\left[1-\frac{3^{\frac13}n}{8\d\l^{\frac23}}(T-t)^{\frac13}+\frac{3^{\frac13}}{4\d\l^{\frac23}}\frac{r^2}{(T-t)^{\frac23}}+\cdots\right],
\end{equation}
for $r\ll1$ and $T-t\ll1$. Finally, using the nonlinear mapping \eqref{eqn-nonlinear mapping} relating $u$ and $\z$, we conclude that
\begin{equation}\label{eqn-u.asymptotic}
	u\sim1-\left[3\l(T-t)\right]^{\frac13}\left(1-\frac{3^{\frac13}n}{8\d\l^{\frac23}}(T-t)^{\frac13}+\frac{3^{\frac13}}{4\d\l^{\frac23}}\frac{r^2}{(T-t)^{\frac23}}+\cdots\right)^{\frac13}.
\end{equation}

\section{Numerical simulations}

\setcounter{equation}{0}

\subsection{Numerical experiments on pull-in voltage and quenching time}

In section 3, we investigate the pull-in voltages $\l_\d^*$ and the finite quenching time $T$ of \eqref{F}. We shall verify our results in section 3 by numerically computing $\l_\d^*$ and $T$ for some choice of domain $\O$. Let us consider the following two choices of $\O$:
\begin{align*}
	\O:&\quad\left[-\frac12,\frac12\right]\quad\textup{(slab)},\\
	\O:&\quad|x|\leq1,\quad x\in\mathbb{R}^2\quad\textup{(unit disk)}.
\end{align*}
To otbain $\l_l$ and $\l_{u,1}$ in \eqref{eqn-lower bound for ld} and Proposition \ref{prop-lower bound for ld}, we numerically solve $-\triangle\xi=1$ in $\O$ with Dirichlet boundary condition, yielding that $||\xi||_\infty\approx0.125$ for the slab and $||\xi||_\infty\approx0.712$ for the unit disk in $\mathbb{R}^2$. The first eigen pairs $(\mu_0,\phi_0)$ of the operator $-\triangle$ with Dirichlet boundary condition in $\O$ and with the normalization $\int_\O\phi_0dx=1$ are explicitly given below
\begin{align}
	\mu_0=\pi^2,&\quad\phi_0=\frac\pi2\sin{\left[\pi\left(x+\frac12\right)\right]}\quad\textup{(slab)},\\
\mu_0=z_0^2\approx5.783,&\quad\phi_0=\frac{z_0}{J_1(z_0)}J_0(z_0(|x|))\quad\textup{(unit disk)},
\end{align}
where $J_0$ and $J_1$ are Bessel functions, and $z_0\approx2.4048$ is the first zero of $J_0(z)$.

We first compute the pull-in voltage $\l_\d^*$ for various $\d$ in Table \ref{table-bounds for the pull-in voltages} for both slab (left column) and unit disk (right column). We use {\it bvp4c} in MatLab to determine $\l_\d^*$ (cf. \cite{SKR}). It is shown that $\l_\d^*$ decreases as $\d$ increases. And $\l_{u,1}$ for the case $\d=0.7$ and $\O$ is the slab provides a better upper bounds than the natural bound $\l_0^*$ (given by the comparison principle, as in \cite{Wq}).
\begin{table}
	\begin{minipage}{.4\linewidth}
	\centering
        \begin{tabular}{|c|c|c|c|}
\hline
		$\d$&$\l_\d^*$&$\l_l$&$\l_{u,1}$\\
\hline
		$0$&$1.440$&$1.1852$&$1.4622$\\
\hline
		$0.1$&$1.391$&$0.9581$&$1.4578$\\
\hline
		$0.7$&$1.196$&$0.4457$&$1.4314$\\
\hline
	\end{tabular}
\subcaption{the slab}
	\end{minipage}	
      \begin{minipage}{.4\linewidth}
	\centering
	  \begin{tabular}{|c|c|c|c|}
\hline
	$\d$&$\l_\d^*$&$\l_l$&$\l_{u,1}$\\
\hline
	$0$&$0.8030$&$0.2080$&$1.4622$\\
\hline
	$0.1$&$0.7890$&$0.2065$&$0.8523$\\
\hline
	$0.7$&$0.712$&$0.1979$&$0.8255$\\
\hline
        \end{tabular}
	\subcaption{the unit disk}
\end{minipage}
   	\caption{Pull-in voltages $\l_\d^*$ of \eqref{F} with $\d=0$, $0.1$ and $0.7$ for both the slab and the unit disk. The lower bound $\l_l$ in \eqref{eqn-lower bound for ld} and the upper bound $\l_{u,1}$ in Proposition \ref{prop-lower bound for ld} are also shown. {\sl Left:} slab; {\sl Right:} unit disk.}\label{table-bounds for the pull-in voltages}
\end{table}

Next, we verify the result in Proposition \ref{prop-delta large} by numerically computing the pull-in voltage for various $\d=0$, $0.7$, $7$, $70$, $700$ and $7000$. The pull-in voltage $\l_\d^*$ is also located by {\it bvp4c} in MatLab. It is clearly verified in Table \ref{table-pull-in voltage tends to zero} that $\l_\d^*\rightarrow0$, as $\d\rightarrow\infty$, for both the slab and the unit disk.

\begin{table}
	\begin{tabular}{|c|c|c|c|c|c|c|}
\hline
		$\d$&$0$&$0.7$&$7$&$70$&$700$&$7000$\\
\hline
		$\l_\d^*$ (slab) &$1.440$&$1.196$&$0.706$&$0.301$&$0.109$&$0.036$\\
\hline
		$\l_\d^*$ (unit disk) &$0.8030$&$0.712$&$0.472$&$0.218$&$0.081$&$0.028$\\
\hline
	\end{tabular}\\[5pt]
	\caption{The pull-in voltages $\l_\d^*$ tend to zero, as $\d\rightarrow\infty$ for both the slab and the unit disk.} \label{table-pull-in voltage tends to zero}
\end{table}

About the quenching time, we use the finite-difference scheme to compute the numerical soltuions to the nonlinear transformed equation \eqref{eqn-z}. The  detailed schemes are provided in section 6.2 below. We numerically verify in Table \ref{table-quenching time} that 
\begin{align}\label{eqn-limit l T}
	\lim_{\l\rightarrow\infty}\l T=\frac13
\end{align}
for the case without the fringing term. It is also shown numerically that \eqref{eqn-limit l T} no longer holds, for $\d>0$. Proposition \ref{prop-different lambda} has also been verified by various $\d$ and domains in Table \ref{table-quenching time}. Moreover, we observe from the results that $\lim_{\l\rightarrow\infty}\l T=0$ and the rate of convergence is independent of the fringing tem $\d$.
\begin{table}
	\begin{tabular}{|c|c|c|c|c|c|}
\hline
		$\d$&$\l$&$T_{slab}$&$\l T_{slab}$&$T_{disk}$&$\l T_{disk}$\\
\hline
		$0$&\makecell{$1.5$\\$10$\\$50$\\$100$}
&\makecell{$1.073664$\\$0.034122$\\$0.0066666$\\$0.003333$}
&\makecell{$1.610496$\\$0.34122$\\$0.3333$\\$0.3333$}
&\makecell{$0.292764$\\$0.033348$\\$0.006666$\\$0.00333$}
&\makecell{$ 0.439146$\\$0.33348$\\$0.333$\\$0.333$}\\
\hline
		$0.1$&\makecell{$2$\\$20$\\$200$\\$2000$}
&\makecell{$0.30837$\\$0.016692$\\$0.000816$\\$0.000048$}
&\makecell{$0.6167$\\$0.3338$\\$0.1632$\\$0.0960$}
&\makecell{$0.19011$\\$0.016668$\\$ 0.000816$\\$0.000048$}
&\makecell{$0.38022$\\$0.033336$\\$0.1632$\\$0.0960$}\\
\hline
		$1$&\makecell{$2$\\$20$\\$200$}
&\makecell{$0.24009$\\$0.008658$\\$0.000198$}
&\makecell{$0.4802$\\$0.1732$\\$0.0396$}
&\makecell{$0.18327$\\$0.008778$\\$0.000198$}
&\makecell{$0.36654$\\$0.17556$\\$0.0396$}\\
\hline
		$10$&\makecell{$2$\\$20$\\$200$}
&\makecell{$0.098892$\\$0.001392$\\$0.000066$}
&\makecell{$0.1978$\\$0.02784$\\$0.0132$}
&\makecell{$0.101538$\\$0.001398$\\$0.000066$}
&\makecell{$0.203076$\\$0.02796$\\$0.0132$}\\
\hline	
	\end{tabular}
	\caption{The quenching time $T_{slab}$ and $T_{disk}$ for $\d=0$, $0.1$, $1$ and $10$ with various $\l$ have been numerically computed, where $T_{slab}$ and $T_{disk}$ represent the quenching time for the slab $\left[-\frac12,\frac12\right]$ and the unit disk $|x|\leq 1$ in $\mathbb{R}^2$.}\label{table-quenching time}
\end{table}

\subsection{Numerical solution to \eqref{F}}

To numerically solve \eqref{F}, as suggested in \cite{GPW}, the tranformed problem \eqref{eqn-z} is more suitable for implementation. In fact, if we use the local behavior 
\[
	\z\sim(T-t)+\frac{3^{\frac13}}{4\d\l^{\frac23}}(T-t)^{\frac13}r^2,
\]
we get that
\[
	\frac{|\nabla\z|^2}{\z^{\frac43}}\sim\frac{3^{\frac23}(T-t)^{-\frac23}}{4\d^2\l^{\frac43}\left[r^{-\frac32}+\frac{3^{\frac13}}{4\d\l^{\frac23}}(T-t)^{-\frac23}r^{\frac12}\right]^{\frac43}},
\quad	\frac{|\nabla\z|^2}\z\sim\frac{\frac{3^{\frac23}}{4\d^2\l^{\frac43}}(T-t)^{\frac23}}{\frac{(T-t)}{r^2}+\frac{3^{\frac13}}{4\d\l^{\frac23}}(T-t)^{\frac13}}.
\]
Hence, the two terms $\frac{|\nabla\z|^2}{\z^{\frac43}}$ and $\frac{|\nabla\z|^2}{\z}$ in \eqref{eqn-z} is bounded in $r$, for any fixed $t$, even when $t$ is close to $T$. This allows us to use a simple finite-difference scheme to compute the numerical solutions to \eqref{eqn-z}. 

{\sl Experiment 1.} Let us first consider the domain slab $[-\frac12,\frac12]$ in one dimension with $\l=1$, $1.35$ or $3$ and $\d=0$ or $0.7$. This interval is discretized into $N+1$ pieces with $N=200$, i.e., $h=\frac1{N+1}\approx4.97512\times10^{-3}$ is the spartial mesh size. And the time step is labelled as $dt=6\times10^{-6}$. $\z_j^m$, for $j=1,\cdots,N+2$, is defined to be the discrete approximation to $\z\left(m\,dt,-\frac12+(j-1)h\right)$. The second-order accurate in space and first-order accurate in time scheme of \eqref{eqn-z} is 
\begin{equation}\label{eqn-slab scheme}
	\z_j^{m+1}=\z_j^m+dt\left(\frac{\z_{j+1}^m-2\z_j^m+\z_{j-1}^m}{h^2}-\frac{\left(\z_{j+1}^m-\z_{j-1}^m\right)^2}{6\z_j^mh^2}-\frac{\d\l^{\frac23}}{3^{\frac43}}\frac{\left(\z_{j+1}^m-\z_{j-1}^m\right)^2}{4\left(\z_j^m\right)^{\frac43}h^2}\right),
\end{equation}
$j=2,\cdots,N+1$, with $\z_1^m=\z_{N+2}^m=\frac1{3\l}$ for $m>0$ and $\z_j^0=\frac1{3\l}$ for $j=1,\cdots,N+2$. The time-step $dt$ is chosen to satisfy $dt<\frac{h^2}4$ for the stability of the discrete scheme. The experimental stop time is $T_{ex}=m\times dt$, where the $m$ is such that $\displaystyle\min_{j=1,\cdots,N+2}(\z_j^m-0)<10^{-10}$ for finite time quenching solution or $\displaystyle\max_{j=1,\cdots,N+2}(\z_j^{m+1}-\z_j^m)<10^{-10}$ for the globally existing solution. 
\begin{figure}
	\begin{minipage}{1.1\linewidth}
	\centering\includegraphics[trim = 15mm 85mm 15mm 85mm, clip, height=6cm, width= 11cm]{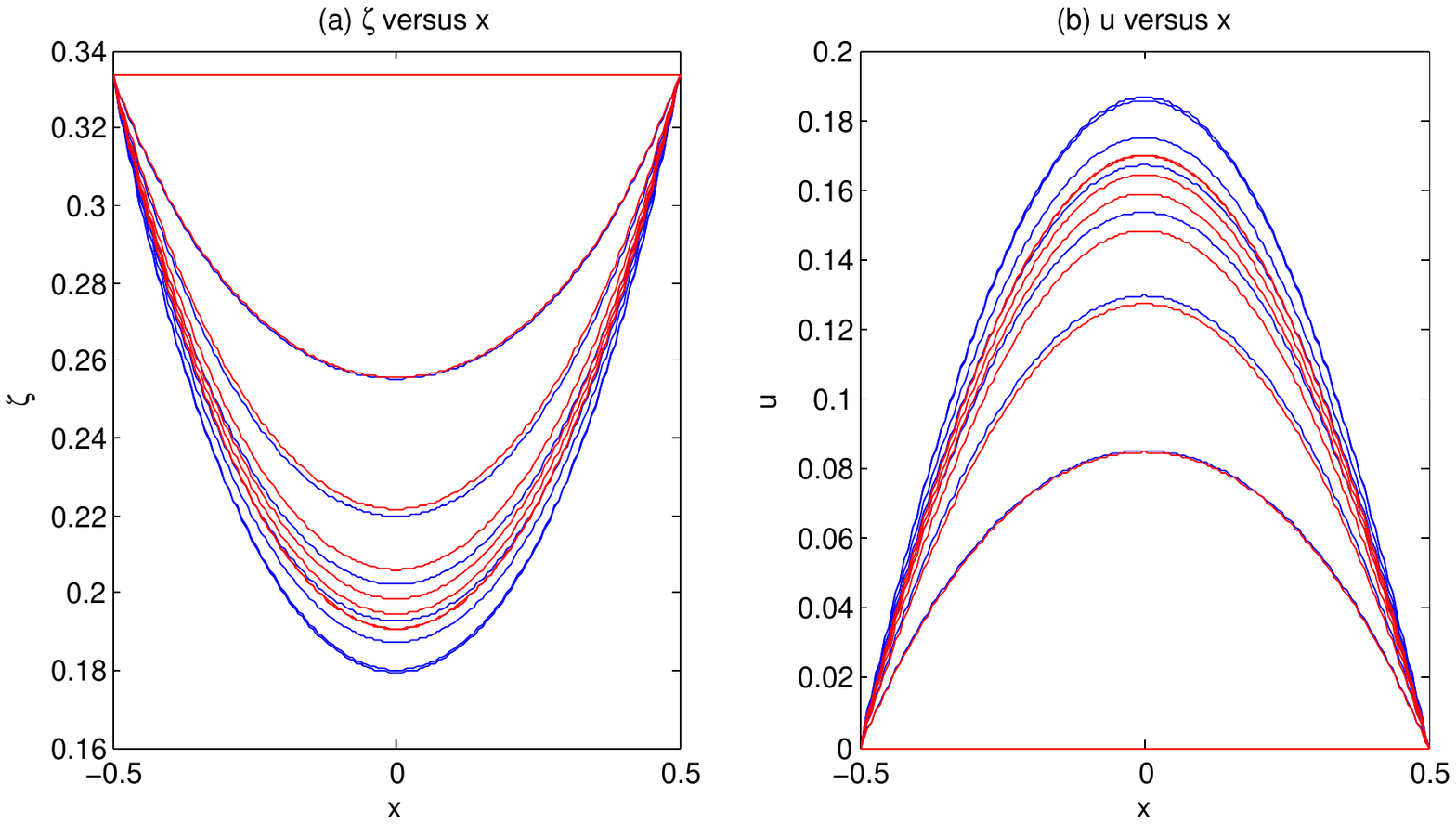}
		\subcaption{$\l=1$. We plot at times $t=0,0.1,0.2,0.3,0.4,0.5,2.0$ and the experimental stop time. Both solutions to \eqref{F} and \eqref{eqn-without fringing} increase towards a steady-state solution as $t$ increases.}\label{fig:subfig-lambda1}
	\end{minipage}
	\begin{minipage}{1.1\linewidth}
	\centering\includegraphics[trim = 15mm 80mm 15mm 80mm, clip, height=6cm, width= 11cm]{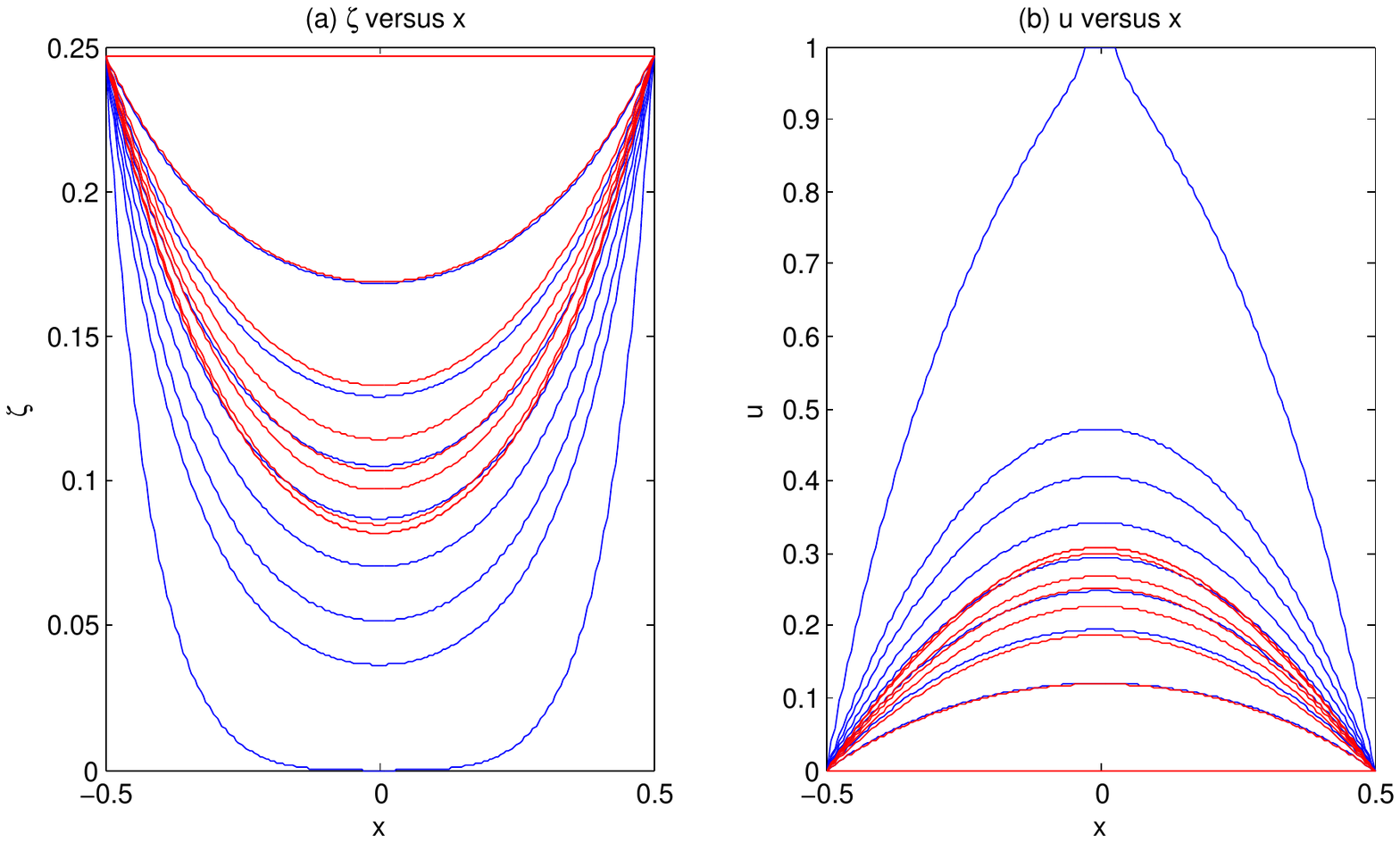}
		\subcaption{$\l=1.35$.  We plot at times $t=0,0.1,0.2,0.3,0.4,0.5,1.0,3.0$ and the experimental stop time for $\d=0$; while at times $t=0,0.1,0.2.0.3,0.4,0.5,0.6,0.66$ and the experimental stop time for $\d=0.7$. The solution to \eqref{eqn-without fringing} still globally exists; while that of \eqref{F} quenches in finite time.}\label{fig:subfig-lambda135}
	\end{minipage}
	\begin{minipage}{1.1\linewidth}
	\centering\includegraphics[trim = 10mm 85mm 10mm 85mm, clip, height=6cm, width= 11cm]{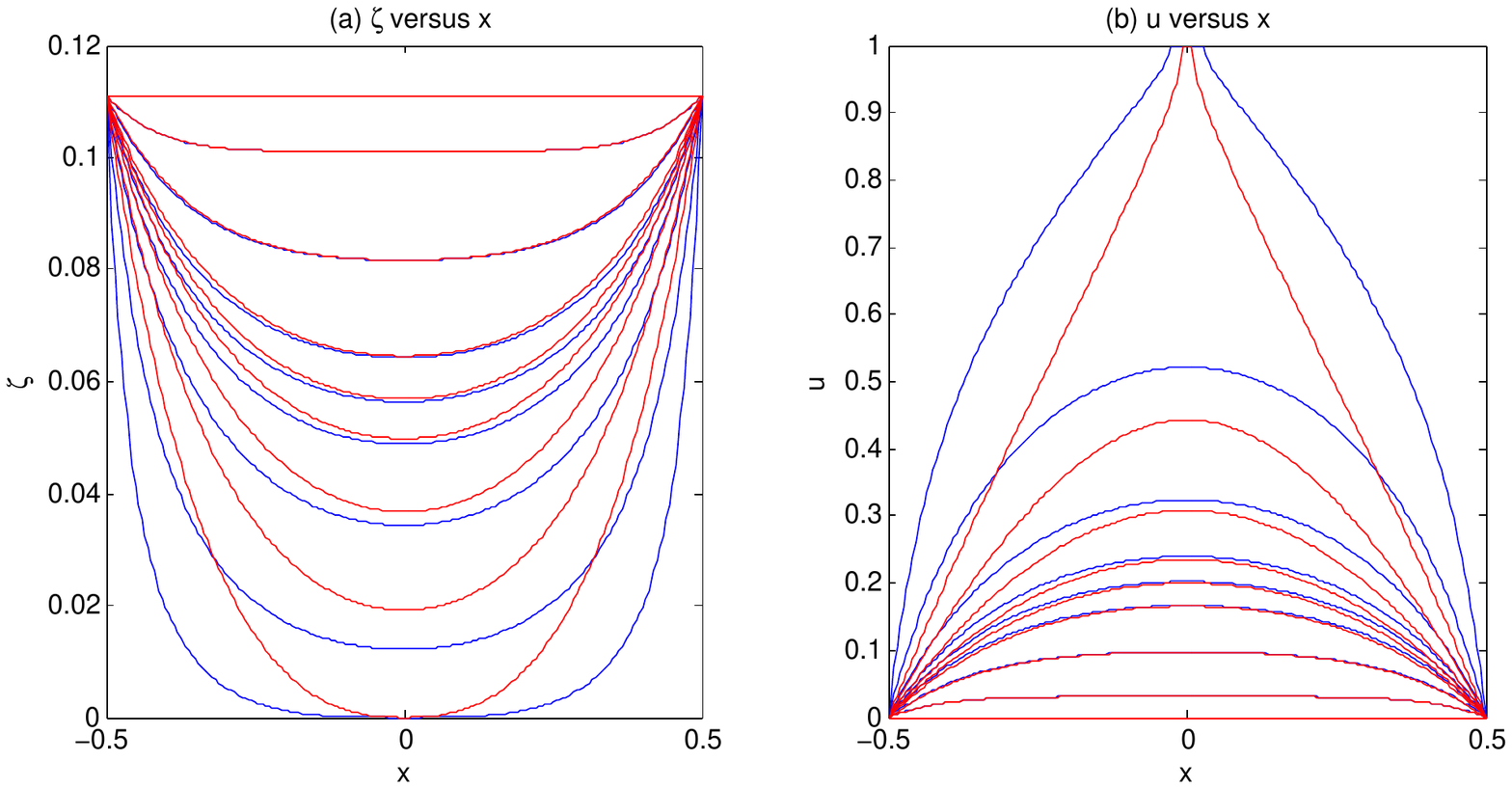}
	\subcaption{$\l=3$. We plot at times $t=0,0.01,0.03,0.05,0.06,0.07,0.09,0.12$ and the experimental stop time. Both solutions to \eqref{eqn-without fringing} and \eqref{F} quench in finite time.}\label{fig:subfig-lambda3}
	\end{minipage}
	\caption{{\sl Experiment 1:} For the slab domain $\left[-\frac12,\frac12\right]$ with different $\l$. We plot $\z$ and $u$ versus $x$ at a sequential times from the finite difference scheme \eqref{eqn-slab scheme} with $N=200$ and $dt=0.6\times10^{-5}$ and $\d=0$ or $0.7$. {\sl Left}: $\z$ versus $x$; {\sl Right}: $u$ versus $x$; {\sl Blue}: solution of \eqref{F}; {\sl Red}: that of \eqref{eqn-without fringing}.}\label{fig-slab}
\end{figure}

In Fig. \ref{fig-slab}, we plot $\z$ v.s. $x$ (left) and $u$ v.s. $x$ (right) from the discrete approximation \eqref{eqn-slab scheme} at a series of times. The solution to \eqref{F} with $\d>0$ is drawn in blue; while that of ($F_{\l,0}$) (cf. \eqref{eqn-without fringing}) is in red. Three different voltages are chosen $\l=1$, $1.35$ and $3$. It is suggested by the numerical simulation that the pull-in voltage of \eqref{eqn-without fringing} should be $1.35<\l^*<3$; while that of ($F_{\l,0.7}$) is between $1$ and $1.35$. The estimate of $\l^*$ matches well with the results in Table \ref{table-bounds for the pull-in voltages}, where $\l_0^*=1.440$ and $\l_{0.7}^*=1.196$. As to the profiles of the solutions to \eqref{F} with $\d=0.7$ and $\d=0$, the behavior is similar, if they both globally exist, see Fig. \ref{fig:subfig-lambda1}; the quenching profile of ($F_{\l,0.7}$) is much flatter than that of \eqref{eqn-without fringing}, if they both quench in finite time, see Fig. \ref{fig:subfig-lambda3}. The quenching times $T$ for both $\d=0$ and $\d=0.7$ in Fig. \ref{fig:subfig-lambda3} are numerically obtained to be around $0.1515$ and $0.134262$, respectively. This numerically verifies Remark \ref{remark-different delta}.

{\sl Experiment 2.} When we consider the unit disk in two dimension, a second-order accurate in space and first-order accurate in time discrete approximation for \eqref{eqn-z}, with spartial mesh size $h$, on $0\leq r\leq 1$ and $t\geq0$ is
\begin{align}\label{eqn-disk scheme}\notag
	\z_j^{m+1}=&\z_j^m+dt\left(\frac{\z_{j+1}^m-2\z_j^m+\z_{j-1}^m}{h^2}+\frac{\z_{j+1}^m-\z_{j-1}^m}{2hr_j}\right.\\
		&\phantom{\z_j^m+dtaa}\left.-\frac{(\z_{j+1}^m-\z_{j-1}^m)^2}{6\z_j^mh^2}-\frac{\d\l^{\frac23}}{3^{\frac43}}\frac{(\z_{j+1}^m-\z_{j-1}^m)^2}{4\left(\z_j^m\right)^{\frac43}h^2}-1\right),
\end{align}
where $r_j=jh$. According to \cite{MM}, the discrete approximation for $\z_1$ at the origin $r=0$ is 
\[
	\z_1^{m+1}=\z_1^m+\frac{4dt}{h^2}(\z_2^m-\z_1^m).
\]
The condition at $r=1$ is $\z_{N+2}^m=\frac1{3\l}$, and the initial condition is $\z_j^0=\frac1{3\l}$, for $j=1,\cdots,N+2$.  The experimental stop time is $T_{ex}=m\times dt$, where the $m$ is such that $\displaystyle\min_{j=1,\cdots,N+2}(\z_j^m-0)<10^{-10}$ for finite time quenching solution or $\displaystyle\max_{j=1,\cdots,N+2}(\z_j^{m+1}-\z_j^m)<10^{-10}$ for the globally existing solution. 

In Fig. \ref{fig-diskdelta07lambda1}, we plot $\z$ v.s. $|x|$ (left) and $u$ v.s. $|x|$ (right) from the discrete approximation \eqref{eqn-disk scheme} with the voltage chosen to be $\l=1$ at times $t=0.1,0.2,0.3,0.4,0.5$ and the experimental stop time $T_{ex}$. The solution to \eqref{F} with $\d>0$ is drawn in blue; while that of ($F_{\l,0}$) or \eqref{eqn-without fringing} is in red. It is suggested by the numerical simulations that both the pull-in voltage $\l^*$ of ($F_{\l,0}$) and that $\l_{0.7}^*$ of ($F_{\l,0.7}$) are less than $1$. This coincides with $\l^*=0.8030$ and $\l_{0.7}^*=0.712$ in Table \ref{table-bounds for the pull-in voltages} or Table \ref{table-pull-in voltage tends to zero}. And the quenching times $T$ with $\d=0$ and $0.7$ are numerically obtained to be around $0.7076$ and $0.578232$, respectively. 

\begin{figure}[]
	\includegraphics[trim = 10mm 85mm 10mm 85mm, clip, height=6cm, width= 13cm]{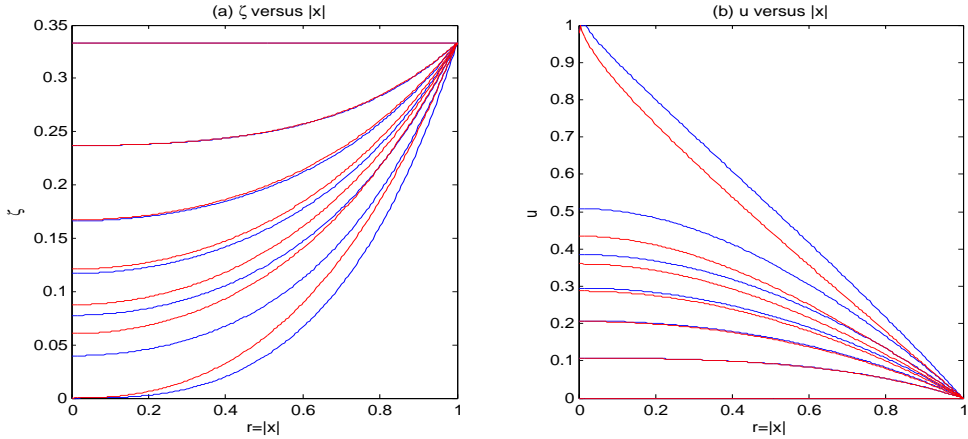}
	\caption{{\small {\sl Experiment 2:} For the unit disk domain in two dimension with $\l=1$. We plot $\z$ and $u$ versus $x$ at times $t=0.1,0.2,0.3,0.4,0.5$ and the experimental stop time from the finite difference scheme \eqref{eqn-slab scheme} with $N=200$ and $dt=0.6\times10^{-5}$ and $\d=0$ or $0.7$. {\sl Left}: $\z$ versus $|x|$; {\sl Right}: $u$ versus $|x|$; {\sl Blue}: solution of \eqref{F}; {\sl Red}: that of \eqref{eqn-without fringing}.} }\label{fig-diskdelta07lambda1}
\end{figure}

{\sl Experiment 3.} Let us examine the local approximation constructed in \eqref{eqn-u.asymptotic} numerically. From Experiment 1, the numerically obtained the quenching time for ($F_{3,0.7}$) in the slab domain $\left[-\frac12,\frac12\right]$ is $0.134262$; and from Experiment 2, the quenching time for ($F_{1,0.7}$) in the unit disk of dimension two is around $0.578232$. In Fig. \ref{fig-localcompare}, we plot $\z$ v.s. $x$ and $|x|$ of the discrete approximation \eqref{eqn-slab scheme} with $\l=3$ and \eqref{eqn-disk scheme} with $\l=1$ at time $t=0.134004$ and $t=0.57822$, respectively, in blue. At the same time, we plot the local approximation obtained in \eqref{eqn-z.asymptotic} in black. From Fig. \ref{fig-localcompare}, the local approximation \eqref{eqn-z.asymptotic} matches the numerical solutions well.
\begin{figure}[]
	\includegraphics[trim = 20mm 90mm 20mm 90mm, clip, height=6cm, width= 13cm]{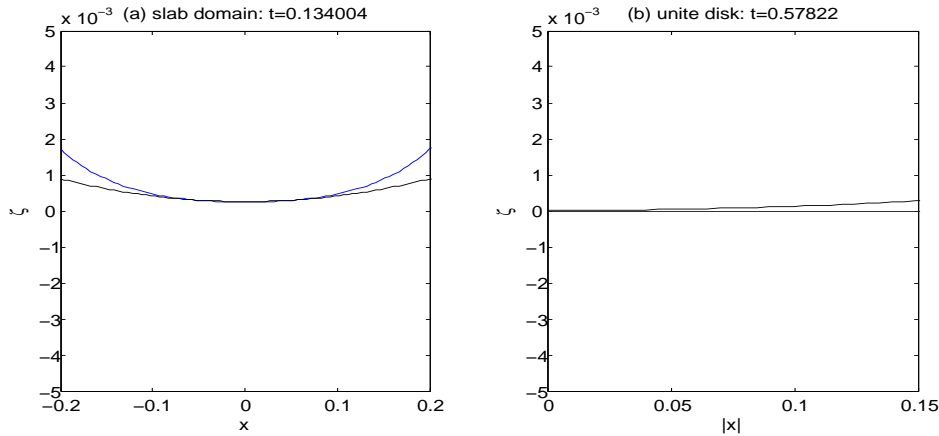}
	\caption{{\small {\sl Experiment 3:} We plot $\z$ versus $x$ or $|x|$ for (a) slab domain and (b) unit disk of the discrete approximations of \eqref{eqn-z}. {\sl Blue}: the numerical solution given by \eqref{eqn-slab scheme} (left column) or \eqref{eqn-disk scheme} (right column); {\sl Black}: the local approximations given by \eqref{eqn-z.asymptotic}.}}\label{fig-localcompare}
\end{figure}

\section{Conclusion}

\setcounter{equation}{0}

In this paper, we study the equation \eqref{F} modelling the MEMS device with the fringing term $\d>0$. We first show that the pull-in voltage $\l_\d^*>0$ obtained in \cite{WY} is the watershed of globally existing solution and the finite time quenching solution of \eqref{F}. To be more precisely, if $\l\leq\l_\d^*$, then the unique solution to \eqref{F} exists globally; otherwise, the solution will quench in finite time $T<\infty$. 

According to the comparision principle, a natural upper bound of $\l_\d^*$ is $\l^*$, the pull-in voltage of ($F_{\l,0}$). In this paper, it has been slightly improved in Proposition \ref{prop-lower bound for ld} for $\d\ll1$ and numerically verified in Table \ref{table-bounds for the pull-in voltages}. Moreover, we prove that $\lim_{\d\rightarrow\infty}\l_\d^*=0$. This has been validated numerically in Table \ref{table-pull-in voltage tends to zero}. 

About the quenching time $T$, for $\l>\l_\d^*$, we show that it satisfies $T\lesssim\frac1\l$, which differs from that corresponding to ($F_{\l,0}$) where $\lim_{\l\rightarrow\infty}\l T=\frac13$. We conjecture from Table \ref{table-quenching time} that $\lim_{\l\rightarrow\infty}\l T=0$ and the rate of convergence is independent of $\d$.

By adapting the moving-plane argument as in \cite{GNN}, we show that the quenching set of \eqref{F} is a compact set in $\O$, if $\O\subset\mathbb{R}^n$ is a bounded convex set. Furthermore, if $\O=B_R(0)$, the ball centered at the origin with the radius $R$, then the origin is the only quenching point. This is clearly seen from Fig. \ref{fig-slab} and Fig. \ref{fig-diskdelta07lambda1}.

Finally, we investigate the quenching behavior of the solution to \eqref{F} with $\l>\l_\d^*$. It is shown in this paper that, under certain condition, if $u$ is the solution to \eqref{F} quenching at $x=a$ in finite time $T$, then it satisfies \[
	\lim_{t\rightarrow T^-}(1-u(x,t))(T-t)^{-\frac13}=(3\l)^{\frac13}.
\] 
More refined asymptotic expansion is given in \eqref{eqn-u.asymptotic}. And it has been verified numerically in Fig. \ref{fig-localcompare} that this is a good local approximation.

\end{document}